\newcommand{\p}{\mathfrak{p}}
\newcommand{\q}{\mathfrak{q}}
\renewcommand{\v}{\mathfrak{v}}
\newcommand{\C}{\mathbb{C}}
\newcommand{\E}{\mathfrak{E}}
\newcommand{\N}{\mathbb{N}}
\newcommand{\R}{\mathbb{R}}
\newcommand{\Z}{\mathbb{Z}}
\newcommand{\boA}{\mathcal{A}}
\newcommand{\boC}{\mathcal{C}}
\newcommand{\boE}{\mathcal{E}}
\newcommand{\boP}{\mathcal{P}}
\newcommand{\boX}{\mathcal{X}}
\newcommand{\boZ}{\mathcal{Z}}
\newcommand{\Card}{{\rm Card}}
\renewcommand{\th}{{\rm th}}
\newtheorem{claim}{Claim}
\newtheorem{cor}{Corollary}
\newtheorem{lemma}{Lemma}
\newtheorem{prop}{Proposition}
\newtheorem{step}{Step}
\newtheorem{theorem}{Theorem}
\theoremstyle{definition}
\newtheorem*{merci}{Acknowledgements}
\newtheorem{remark}{Remark}
\begin{document}

\title{Orbital stability of the black soliton to the Gross-Pitaevskii equation}
\author{\renewcommand{\thefootnote}{\arabic{footnote}}
Fabrice B\'ethuel \footnotemark[1], Philippe Gravejat \footnotemark[2], Jean-Claude Saut \footnotemark[3], Didier Smets \footnotemark[4]}
\footnotetext[1]{Laboratoire Jacques-Louis Lions, Universit\'e Pierre et Marie Curie, Bo\^ite Courrier 187, 75252 Paris Cedex 05, France. E-mail: bethuel@ann.jussieu.fr}
\footnotetext[2]{Centre de Recherche en Math\'ematiques de la D\'ecision, Universit\'e Paris Dauphine, Place du Mar\'echal De Lattre De Tassigny, 75775 Paris Cedex 16, France. E-mail: gravejat@ceremade.dauphine.fr}
\footnotetext[3]{Laboratoire de Math\'ematiques, Universit\'e Paris Sud, B\^atiment 425, 91405 Orsay Cedex, France. E-mail: Jean-Claude.Saut@math.u-psud.fr}
\footnotetext[4]{Laboratoire Jacques-Louis Lions, Universit\'e Pierre et Marie Curie, Bo\^ite Courrier 187, 75252 Paris Cedex 05, France. E-mail: smets@ann.jussieu.fr}
\date{April 1, 2008}
\maketitle

\begin{abstract}
We establish the orbital stability of the black soliton, or kink solution, $\v_0(x) = \th(\frac{x}{\sqrt{2}})$, to the one-dimensional Gross-Pitaevskii equation, with respect to perturbations in the energy space.
\end{abstract}

%%%%%%%%%%%%%%%%%%%%%%
%%%%%%%%%%%%%%%%%%%%%%
\section{Introduction}
%%%%%%%%%%%%%%%%%%%%%%
%%%%%%%%%%%%%%%%%%%%%%

In this paper, we consider the one-dimensional Gross-Pitaevskii equation
\renewcommand{\theequation}{GP}
\begin{equation}
\label{GP}
i \Psi_t + \Psi_{xx} = \Psi (|\Psi|^2-1) \ {\rm on} \ \R \times \R,
\end{equation}
which is a version of the defocusing cubic nonlinear Schr\"odinger equations. We supplement this equation with the boundary condition at infinity
\renewcommand{\theequation}{\arabic{equation}}
\numberwithin{equation}{section}
\setcounter{equation}{0}
\begin{equation}
\label{bdinfini}
|\Psi(x, t)| \to 1, \ {\rm as} \ |x| \to + \infty.
\end{equation}
This boundary condition is suggested by the formal conservation of the energy (see \eqref{GLE} below), and by the use of the Gross-Pitaevskii equation as a physical model, e.g. for the modelling of ``dark solitons'' in nonlinear optics (see \cite{KivsLut1}). Moreover, the boundary condition \eqref{bdinfini} provides \eqref{GP} with a richer dynamics than in the case of null condition at infinity which is essentially governed by dispersion and scattering. In particular, equation \eqref{GP} with condition \eqref{bdinfini} has nontrivial localized coherent structures called ``solitons''.

At least on a formal level, the Gross-Pitaevskii equation is hamiltonian. The conserved Hamiltonian is a Ginzburg-Landau energy, namely
\begin{equation}
\label{GLE}
E(\Psi) = \frac{1}{2} \int_{\R} |\Psi'|^2 + \frac{1}{4} \int_{\R} (1 - |\Psi|^2)^2 \equiv \int_{\R} e(\Psi).
\end{equation}
Similarly, as far as it might be defined, the momentum
$$P(\Psi) = \frac{1}{2} \int_{\R} \langle i \Psi, \Psi' \rangle$$
is formally conserved. We will see though that the definition of this quantity raises a number of difficulties. Another quantity which is formally conserved by the flow is the mass
$$m(\Psi) = \frac{1}{2} \int_{\R} \Big( |\Psi|^2 - 1 \Big).$$

In this paper, we will only consider {\bf finite energy} solutions to \eqref{GP}.
Equation \eqref{GP} is then integrable in dimension one by means of the inverse scattering method, and it has been formally analyzed within this framework in \cite{ShabZak2}. Recently, P. G\'erard and Z. Zhang \cite{GeraZha1} gave a complete justification of the method, obtaining in particular rigorous results on the Cauchy problem.

Stationary solutions to \eqref{GP}, that is time independent solutions, are of the form
$$\Psi(x,t) = u(x), \ \forall t \in \R,$$
where the profile $u$ solves the ordinary differential equation
\begin{equation}
\label{ST}
u_{xx} + u (1 - |u|^2) = 0.
\end{equation}
Equation \eqref{ST} may be integrated using standard arguments from ordinary differential equation theory. The non-constant solution of finite energy to \eqref{ST} is given, up to the invariances, by
$$\v_0(x) = \th \Big( \frac{x}{\sqrt{2}} \Big),$$
that is any non-constant solution to \eqref{ST} is of the form
$$u(x) = \exp i \theta \ \v_0(x - a),$$
where $a$ and $\theta$ are arbitrary real numbers. Notice that $\v_0$ is real-valued and vanishes at the origin. Moreover, it converges exponentially fast to $\pm 1$, as $x \to \pm \infty$. This stationary solution is known as a "black soliton" in nonlinear optics (see \cite{KivsLut1}), and is often termed a kink solution. It plays an important role in the theory of phase transitions.

The purpose of this paper is to establish the {\bf orbital stability} of the kink solution. Notice that Di Menza and Gallo \cite{diMeGal1} proved the linear stability and performed several numerics which suggest that a stronger notion of stability does hold.

To state our result, we first recall the classical notion of orbital stability (see, for instance, \cite{Benjami1}). The solution $\v_0$ is said to be orbitally stable in the metric space $X$, if and only if given any $\varepsilon > 0$, there exists some $\delta > 0$ such that for any solution $\Psi$ to \eqref{GP} in $X$, if
$$d_X \big( \Psi(\cdot , 0), \v_0 \big) \leq \delta,$$
then
$$\sup_{t \in \R} \bigg( \inf_{(a, \theta) \in \R^2} d_X \big( \Psi(\cdot, t), \exp i \theta \v_0(\cdot - a ) \big) \bigg) \leq \varepsilon.$$
As a preliminary step, this definition requires to prove that the Cauchy problem for \eqref{GP} is globally well-posed in $X$. A natural choice for $X$ is the energy space
$$\boX^1 \equiv \{ w \in L^\infty(\R), \ {\rm s.t.} \ w' \in L^2(\R), 1 - |w|^2 \in L^2(\R) \}.$$
Given any $v_0 \in \boX^1$, Zhidkov \cite{Zhidkov1} (see also \cite{Gerard2}) established that \eqref{GP} has a global solution with initial data $v_0$. More precisely, we have

\begin{theorem}[\cite{Zhidkov1,Gerard2}]
\label{cauchy}
Let $v_0 \in \boX^1$. There exists a unique solution $v$ of $(GP)$ such
that $v(0) = v_0$, and $t \mapsto v(t) - v_0 \in \boC^0(\R, H^1(\R))$. Moreover, the Ginzburg-Landau energy is conserved,
$$E(v(t)) = E(v_0), \ \forall t \in \R.$$
\end{theorem}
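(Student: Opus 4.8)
The plan is to solve (GP) in the affine space $v_0 + H^1(\R)$ by reducing it to a semilinear Schr\"odinger equation for the difference $w = v - v_0$. Since $v_0$ does not depend on time, substituting $v = v_0 + w$ into (GP) gives
\[ i w_t + w_{xx} = \tilde N(w) + g_0, \]
where $\tilde N(w) := v_0^2 \bar w + (2|v_0|^2-1) w + 2 v_0 |w|^2 + \bar v_0 w^2 + |w|^2 w$ collects the terms that vanish at $w=0$, and $g_0 := v_0(|v_0|^2-1) - (v_0)_{xx}$ is a time-independent source. Throughout I would exploit the two one-dimensional facts that $H^1(\R) \hookrightarrow L^\infty(\R)$ and that $H^1(\R)$ is a Banach algebra: combined with $v_0 \in L^\infty(\R)$ and $v_0' \in L^2(\R)$, a term-by-term inspection shows that $\tilde N$ maps $H^1(\R)$ into itself and is Lipschitz on bounded sets. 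The source $g_0$, however, only lies in $H^{-1}(\R)$, since $(v_0)_{xx} \in H^{-1}(\R)$ while $v_0(|v_0|^2-1) \in L^2(\R)$.

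I would first prove local well-posedness by a contraction mapping argument on the Duhamel formulation
\[ w(t) = S(t) w_0 - i \int_0^t S(t-s)\big(\tilde N(w(s)) + g_0\big)\, ds, \qquad S(t) := e^{it\partial_x^2}. \]
The delicate point is the low regularity of $g_0$: although $g_0 \in H^{-1}(\R)$, the averaged propagator $\int_0^t S(t-s)\, ds$ is the Fourier multiplier $(1 - e^{-it\xi^2})/(i\xi^2)$, whose modulus is bounded by $C(t)/(1+\xi^2)$, so that it gains two derivatives and sends $H^{-1}(\R)$ continuously into $H^1(\R)$. The right-hand side thus defines a map of $\boC^0([-T,T], H^1(\R))$ into itself which, for $T$ small depending only on $\|w_0\|_{H^1}$ and the $\boX^1$-norm of $v_0$, is a contraction. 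This yields a unique local solution $w \in \boC^0([-T,T], H^1(\R))$, and uniqueness on any interval follows from the Lipschitz bound on $\tilde N$ via a Gronwall estimate applied to the difference of two solutions.

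To reach a global solution it remains to bound $\|w(t)\|_{H^1}$ on finite time intervals. Here I would first justify the conservation of energy, $E(v(t)) = E(v_0)$, by differentiating $E$ along the flow (or by regularizing the data and passing to the limit), which immediately bounds $\|v_x(t)\|_{L^2}$ and $\|1-|v(t)|^2\|_{L^2}$ uniformly in $t$, hence also $\|w_x(t)\|_{L^2} \le \|v_x(t)\|_{L^2} + \|v_0'\|_{L^2}$. The step I expect to be the genuine obstacle is that the energy does not control $\|w(t)\|_{L^2}$, and the cubic term $|w|^2 w$ in $\tilde N$ is not tamed by $\|w_x\|_{L^2}$ alone. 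The way around it is to extract a uniform $L^\infty$ bound on $v$ directly from the conserved quantities: writing $f = 1 - |v|^2 \in H^1(\R)$, the Gagliardo--Nirenberg inequality gives $\|f\|_{L^\infty}^2 \le C \|f\|_{L^2} \|f'\|_{L^2} \le C \|f\|_{L^2}\|v\|_{L^\infty}\|v_x\|_{L^2}$, and since $\|v\|_{L^\infty}^2 \le 1 + \|f\|_{L^\infty}$ this closes into a self-improving bound forcing $\|v(t)\|_{L^\infty} \le K_0$ for a constant $K_0$ depending only on $E(v_0)$.

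With $\|v(t)\|_{L^\infty}$, and hence $\|w(t)\|_{L^\infty}$, bounded uniformly in time, every monomial of $\tilde N(w)$ has $L^2$-norm at most linear in $\|w\|_{L^2}$ with coefficients controlled by $E(v_0)$ and $v_0$. Testing the equation for $w$ against $\bar w$ and taking imaginary parts, the dispersive term drops out because $\int_\R \bar w\, w_{xx}$ is real, leaving $\frac{d}{dt}\|w(t)\|_{L^2}^2 = -2\,\Im\int_\R \bar w\,(\tilde N(w) + g_0)$, which I would bound by $C(1 + \|w(t)\|_{L^2}^2)$ using the uniform $L^\infty$ bound, the control of $\|w_x\|_{L^2}$, and $|\langle g_0, \bar w\rangle| \le \|g_0\|_{H^{-1}}\|w\|_{H^1}$. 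Gronwall's lemma then rules out blow-up of $\|w\|_{L^2}$, and therefore of $\|w\|_{H^1}$, on any finite interval, so the local solution extends to all of $\R$; this completes the proof.
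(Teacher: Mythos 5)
Your proposal is correct, and it is essentially the proof from the sources the paper itself cites for this statement (Zhidkov and G\'erard): the paper does not reprove Theorem \ref{cauchy}, but the framework it relies on in Section \ref{sect:4} --- the decomposition $v = v_0 + w$ and the perturbed equation \eqref{eq:perturb} solved in $\boC^0(\R, H^1(\R))$ --- is exactly the one you set up. Your outline contains the standard key ingredients in the right order: the two-derivative gain of the time-averaged propagator $\int_0^t S(t-s)\, ds$ to absorb the $H^{-1}$ source $g_0$, energy conservation justified by regularizing the data, the uniform $L^\infty$ bound on $v$ extracted from the conserved energy via Gagliardo--Nirenberg, and the Gronwall estimate on $\|w\|_{L^2}$ (which the energy alone does not control) to exclude finite-time blow-up.
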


Given any $A > 0$, we consider on $\boX^1$ the distance $ d_{A, \boX^1}$ defined by
$$d_{A, \boX^1}(v_1, v_2) \equiv \| v_1 - v_2 \|_{L^\infty([- A, A])} + \| v_1' - v_2' \|_{L^2(\R)} + \| |v_1| - |v_2| \|_{L^2(\R)}.$$
Our main result is

\begin{theorem}
\label{stationnaire}
Assume that $v_0 \in \boX^1$ and consider the global in time solution
$v$ to \eqref{GP} with initial datum $v_0$. Given any numbers $\varepsilon > 0$ and
$A > 0$, there exists some positive number $\delta$, such that if
\begin{equation}
\label{pioneer}
d_{A, \boX^1}(v_0, \v_0) \leq \delta,
\end{equation}
then, for any $t \in \R$, there exist numbers $a(t)$ and $\theta(t)$ such that
\begin{equation}
\label{solaris}
d_{A, \boX^1} \big( v(\cdot + a(t), t), \exp i \theta (t) \ \v_0(\cdot) \big) < \varepsilon.
\end{equation}
\end{theorem}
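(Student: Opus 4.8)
The plan is to combine a modulation argument with a coercivity estimate for the Ginzburg--Landau energy, where the one missing direction is supplied by the conservation of momentum.

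\textbf{Reduction and modulation.} I would first recast the conclusion as an a priori estimate closed by a continuity (bootstrap) argument: since $t \mapsto v(t) - v_0 \in \boC^0(\R, H^1(\R))$ by Theorem~\ref{cauchy}, the set of times at which $v(\cdot, t)$ lies within distance $\varepsilon$ of the orbit $\boO = \{ e^{i\theta} \v_0(\cdot - a) : (a,\theta) \in \R^2 \}$ is open, so it suffices to improve $\varepsilon$-closeness to $(\varepsilon/2)$-closeness on that set. As long as $v(\cdot,t)$ is $\varepsilon$-close to $\boO$, the implicit function theorem produces parameters $a(t), \theta(t)$ for which the perturbation $w := e^{-i\theta}v(\cdot + a) - \v_0 = w_1 + i w_2$ is orthogonal to the two generators $\v_0'$ and $i\v_0$ of the symmetry group, which fixes the translation and the phase (the condition against $i \v_0$ needing care, as $\v_0 \notin L^2$). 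One keeps in mind that $w' \in L^2$ and $1 - |v|^2 \in L^2$, but $w$ itself need not lie in $L^2$, so its natural size is exactly $d_{A, \boX^1}(v, \boO)$.

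\textbf{Second variation.} Expanding $E(v) - E(\v_0)$ and using that $\v_0$ solves \eqref{ST}, the quadratic part splits as $Q(w) = \langle L_1 w_1, w_1 \rangle + \langle L_2 w_2, w_2 \rangle$, with $L_1 = -\partial_{xx} + (3\v_0^2 - 1)$ and $L_2 = -\partial_{xx} + (\v_0^2 - 1)$. Differentiating \eqref{ST} gives $L_1 \v_0' = 0$, and as $\v_0' = \tfrac{1}{\sqrt 2}\ch^{-2}(\tfrac{x}{\sqrt 2}) > 0$ is nodeless it is the ground state, so $L_1 \geq 0$ with kernel $\R \v_0'$; the translation orthogonality then yields $\langle L_1 w_1, w_1 \rangle \gtrsim \| w_1' \|_{L^2}^2 + \| w_1 \|_{L^2}^2$ (there is a spectral gap since $3\v_0^2 - 1 \to 2$). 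The operator $L_2$ is of P\"oschl--Teller type: $\v_0$ solves $L_2 \v_0 = 0$ but is only a non-$L^2$ threshold resonance, while $\ch^{-1}(\tfrac{x}{\sqrt 2})$ is a genuine eigenfunction with negative eigenvalue $-\tfrac12$. Hence $Q_2 = \langle L_2 \cdot, \cdot \rangle$ has a single negative direction, and \emph{energy conservation alone cannot control $w_2$}: this is the heart of the matter.

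\textbf{Using the momentum.} The remedy is the conservation of $P$. A direct linearization gives $P'(\v_0)[w] = -\int_\R \v_0' w_2$, so that, since $P(v(t)) = P(v_0) = O(\delta)$ and $P$ is invariant under translation and phase, the constraint $\int_\R \v_0' w_2 = O(\delta + \| w \|^2)$ holds along the flow. As $\ch^{-1}$ is not orthogonal to $\v_0' \propto \ch^{-2}$, this constraint does interact with the negative mode, and the classical one-constraint coercivity criterion (Grillakis--Shatah--Strauss, Weinstein) demands $\langle L_2^{-1} \v_0', \v_0' \rangle < 0$. Solving $L_2 \xi = -\v_0'$, where $\xi = \partial_c (\Im u_c)|_{c=0}$ for the travelling-wave family
\[
u_c(x) = \sqrt{\tfrac{2 - c^2}{2}}\, \th\!\Big( \tfrac{\sqrt{2 - c^2}}{2}\, x \Big) + i \tfrac{c}{\sqrt 2},
\]
identifies $\langle L_2^{-1} \v_0', \v_0' \rangle = p'(0)$ with $p(c) = P(u_c) = -\tfrac{c}{2}\sqrt{2 - c^2}$. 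Thus $p'(0) = -\tfrac{1}{\sqrt 2} < 0$, precisely the sign needed, and $Q_2$ becomes coercive on $\{ \int \v_0' w_2 = 0 \}$, in the sense $\langle L_2 w_2, w_2 \rangle \gtrsim \| w_2' \|_{L^2}^2 + \int_\R (1 - \v_0^2) w_2^2$. Combining the two pieces gives $E(v) - E(\v_0) \gtrsim d_{A, \boX^1}(v, \boO)^2 - o(\cdot) - O(\delta)$, and since $E(v(t)) = E(v_0) = E(\v_0) + O(\delta)$, the bootstrap closes.

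\textbf{Main obstacle.} The principal difficulty is analytic rather than algebraic: the momentum $P(v) = \tfrac12 \int_\R \langle i v, v' \rangle$ is not well defined on all of $\boX^1$, the integrand being merely $L^2$ rather than $L^1$, so its very definition, its conservation, and its weak continuity must be established by hand. I would handle this through a renormalised or localised momentum, exploiting that $v(t) - v_0 \in H^1(\R)$ by Theorem~\ref{cauchy}, so that the asymptotic phase of $v(t)$ stays near that of $\v_0$ and $P(v(t)) - P(\v_0)$ can be given meaning and shown to be conserved and $O(\delta)$. An intertwined obstacle is that coercivity holds only in the weak distance $d_{A, \boX^1}$: because $L_2$ carries a threshold resonance, there is no spectral gap above $0$ and hence no global $L^2$ control of $w_2$, which is exactly why the statement uses $\| \cdot \|_{L^\infty([-A,A])}$ rather than a global supremum norm. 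Propagating the smallness of the delicately defined momentum, and bounding the nonlinear remainder of $Q$ uniformly in this weak norm, are where the bulk of the technical work will lie.
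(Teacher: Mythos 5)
Your proposal follows the Grillakis--Shatah--Strauss/Weinstein route (modulation, second variation, one negative direction killed by momentum conservation), which is genuinely different from the paper's argument: the paper never linearizes, but instead characterizes $\v_0$ variationally as the unique minimizer of $E$ under the constraint $[p](v)=\frac{\pi}{2} \bmod \pi$ (Lemma \ref{verrat}, via the concentration-compactness Theorem \ref{laye} and the strict concavity of $E_{\min}$), and then concludes by a contradiction argument combining conservation of $E$ and of the untwisted momentum $[p]$ (Proposition \ref{grouic}) with the continuity of $[p]$ (Lemma \ref{lem:lip}). As written, your route contains two genuine gaps.

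First, your coercivity claim for $L_2$ is false as stated. The zero resonance $\v_0$ itself satisfies your momentum constraint, since $\int_\R \v_0'\v_0 = \frac{1}{2}\big[\v_0^2\big]_{-\infty}^{+\infty}=0$, and it is a null direction of the quadratic form: $\langle L_2\v_0,\v_0\rangle = \int_\R |\v_0'|^2 - \int_\R (1-\v_0^2)\v_0^2 = \frac{2\sqrt 2}{3}-\frac{2\sqrt 2}{3}=0$, whereas $\|\v_0'\|_{L^2}^2+\int_\R(1-\v_0^2)\v_0^2>0$. So on $\{\int_\R \v_0' w_2 = 0\}$ the form $\langle L_2 w_2,w_2\rangle$ cannot dominate $\|w_2'\|_{L^2}^2+\int_\R(1-\v_0^2)w_2^2$; the single-constraint GSS criterion only yields non-negativity here, not coercivity. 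The missing direction is of course the phase generator, and you do mention an orthogonality condition against $i\v_0$ in the modulation step, but you drop it exactly where it is needed; worse, the pairing $\int_\R \v_0 w_2$ that would express it is not defined in your setting, since neither $\v_0$ nor $w_2$ decays at infinity. Formulating a usable substitute, and proving a quantitative (necessarily degenerate, since the essential spectrum touches $0$ at the resonance) coercivity with it, is precisely the hard analytic content the sketch omits. (A minor slip in the same step: the correct value is $\langle L_2^{-1}\v_0',\v_0'\rangle = -\langle \tfrac{1}{\sqrt 2},\v_0'\rangle = -\sqrt 2$, matching \eqref{monaco}; your $-\tfrac{1}{\sqrt 2}$ comes from the plain momentum $-\tfrac{c}{2}\sqrt{2-c^2}$, which misses the phase-winding term. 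The sign, which is all that matters, is the same.)

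Second, the termwise quadratic expansion of the energy does not exist in the topology of the theorem. Smallness of $d_{A,\boX^1}(v_0,\v_0)$ does not place $w=v_0-\v_0$ in $L^2$: take $v_0 = e^{i\epsilon\varphi}\v_0$ with $\varphi(x)=\ln(1+x^2)$, for which $\|v_0'-\v_0'\|_{L^2}=O(\epsilon)$, $|v_0|=|\v_0|$, and $\|v_0-\v_0\|_{L^\infty([-A,A])}=O(\epsilon \ln(1+A^2))$, yet $w\notin L^2(\R)$. For this admissible perturbation, $E(v_0)-E(\v_0)=\frac{\epsilon^2}{2}\int_\R \varphi'^2\v_0^2$ is finite and small, while your quadratic piece contains $\int_\R (3\v_0^2-1)w_1^2 = +\infty$, because $3\v_0^2-1\to 2$ and $w_1=(\cos(\epsilon\varphi)-1)\v_0$ does not decay. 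Thus $Q(w)$, and with it the key inequality $E(v)-E(\v_0)\gtrsim d_{A,\boX^1}(v,\boO)^2-\ldots$, is simply not defined for the class of perturbations the theorem allows: the splitting of $E(v)-E(\v_0)$ into quadratic form plus controllable remainder breaks down termwise (only compensated combinations converge). This is not a technicality one can wave through; repairing it requires rewriting the functional in variables adapted to the geometry (e.g. $1-|v|^2$ and derivatives), and together with the first gap it amounts to a substantially different and longer proof. The paper's compactness argument sidesteps both issues entirely, which is exactly why it works directly in $\boX^1$ with the weak distance $d_{A,\boX^1}$.
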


The number $a(t)$ provided by $\eqref{solaris}$ is not unique. It describes the shift in space of the solution. We will show that $a(t)$ moves slowly. More precisely, we have

\begin{theorem} 
\label{vitesse}
Given any numbers $\varepsilon > 0$, sufficiently small, and $A > 0$, there exists some constant $K$, only depending on $A$, and some positive number $\delta > 0$ such that, if $v_0$ and $v$ are as in Theorem \ref{stationnaire} and if \eqref{pioneer} holds, then
\begin{equation}
\label{shift}
|a(t)| \leq K \varepsilon (1 + |t|),
\end{equation}
for any $t \in \R$, and for any of the points $a(t)$ satisfying inequality \eqref{solaris} for some $\theta(t) \in \R$.
\end{theorem}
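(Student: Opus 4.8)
The plan is to track the position of the kink through the first moment of the density defect $1-|v|^2$, and to show that this moment evolves affinely in time with a small slope governed by the (formally) conserved momentum. Recall that for the translated kink the defect $1-|\v_0(\cdot-a)|^2={\rm sech}^2\big(\tfrac{\cdot-a}{\sqrt2}\big)$ is a fixed bump of total mass $\int_\R(1-|\v_0|^2)=2\sqrt2$ centered at $a$. I would therefore introduce the functional
$$I(t)=\int_\R x\,\big(1-|v(x,t)|^2\big)\,dx$$
(understood with the spatial truncation discussed below) and aim to prove the two facts $I(t)=2\sqrt2\,a(t)+O(\varepsilon)$ and $|I(t)|\le K\varepsilon(1+|t|)$, which together yield \eqref{shift}.

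For the evolution, multiplying \eqref{GP} by $\bar v$ and taking imaginary parts gives the local conservation law $\partial_t(1-|v|^2)=2\,\partial_x\langle iv,v_x\rangle$. Integrating against $x$ and formally discarding the boundary terms at infinity,
$$\frac{d}{dt}I(t)=-2\int_\R\langle iv,v_x\rangle\,dx=-4\,P\big(v(t)\big)=-4\,P(v_0),$$
where the last equality uses the formal conservation of the momentum. Since $\v_0$ is real-valued we have $P(\v_0)=0$, and since $v_0$ is $\delta$-close to $\v_0$ in $d_{A,\boX^1}$ the momentum is small, $|P(v_0)|\le K\varepsilon$. Hence $I$ is affine in $t$ with slope $O(\varepsilon)$. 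Together with the initial bound $|I(0)|\le K\varepsilon$ — which follows because the first moment of $\v_0$ vanishes by parity while $v_0$ is $\delta$-close to $\v_0$ — this gives $|I(t)|\le K\varepsilon(1+|t|)$.

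To convert this into a bound on $a(t)$, I would invoke the orbital stability of Theorem \ref{stationnaire}: for every $t$ and every admissible pair $(a(t),\theta(t))$ satisfying \eqref{solaris} one has $\big\||v(\cdot+a(t),t)|-|\v_0|\big\|_{L^2(\R)}<\varepsilon$. Writing $|v(y+a,t)|=|\v_0(y)|+r(y)$ with $\|r\|_{L^2}<\varepsilon$ and changing variables $y=x-a$,
$$I(t)=\int_\R(y+a)\big(1-|\v_0(y)|^2\big)\,dy+\int_\R(y+a)\big(-2|\v_0|r-r^2\big)\,dy=2\sqrt2\,a(t)+O(\varepsilon),$$
where the first integral equals $2\sqrt2\,a$ by parity and the second is $O(\varepsilon)$ by Cauchy--Schwarz against the exponentially localized weight, up to a harmless $a\cdot O(\varepsilon)$ correction absorbed for $\varepsilon$ small. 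Since this holds for any admissible $a(t)$, combining with the previous paragraph yields $|a(t)|\le K\varepsilon(1+|t|)$ for all such points, which is \eqref{shift}.

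The main obstacle is precisely the point the authors flag after defining $P$: neither the integral defining $I$ nor the momentum integral $\int_\R\langle iv,v_x\rangle$ is absolutely convergent for a general finite-energy $v\in\boX^1$, and the integration by parts above produces boundary terms at infinity. I would make the argument rigorous by replacing $x$ with a truncated weight $x\chi_R(x)$, so that
$$\frac{d}{dt}\int_\R x\chi_R\,(1-|v|^2)=-2\int_\R\chi_R\,\langle iv,v_x\rangle-2\int_\R x\chi_R'\,\langle iv,v_x\rangle.$$
The error term is supported in $R\le|x|\le 2R$, where $|v|\approx1$, and is controlled by the $L^2$-tails of $v_x$ (finite by the conserved energy of Theorem \ref{cauchy}); the delicate part is to pass to the limit $R\to\infty$ uniformly in $t$, that is, to give meaning to the renormalized momentum and to show that it is genuinely conserved and small. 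This localized-momentum bookkeeping, rather than the algebra above, is where the real work lies.
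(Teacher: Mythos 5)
Your guiding intuition (kink position tracked by the first moment of $1-|v|^2$, drifting at a rate given by a momentum) is indeed the paper's motivation --- see \eqref{interP} --- and your truncated evolution identity is exactly Proposition \ref{prop:centredemasse}. But the two steps you lean on are precisely the ones that fail in $\boX^1$, and the paper's proof is built to avoid both. First, the slope: you write $\frac{d}{dt}I=-4P(v(t))=-4P(v_0)$ with $|P(v_0)|\le K\varepsilon$ ``since $v_0$ is $\delta$-close to $\v_0$''. This is false: $P$ is not continuous for $d_{A,\boX^1}$, and need not even be defined, since $d_{A,\boX^1}$-closeness does not force $v_0\in\boZ^1$. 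Concretely, take $v_0=\v_0\,e^{i\varphi}$ with $\varphi'=\varepsilon M^{-1/2}$ on $[0,M]$ and $\varphi'=0$ elsewhere: then $d_{A,\boX^1}(v_0,\v_0)\le C_A\,\varepsilon$ (the phase error is killed near the origin by the exponential decay of $\v_0'$, and $|v_0|=|\v_0|$), yet $\boP(v_0)=\frac12\int\varphi'\,\v_0^2\approx\frac12\varepsilon M^{1/2}$ is arbitrarily large. Only the untwisted momentum $[p]$, valued in $\R/\pi\Z$, is Lipschitz for $d_{A,\boX^1}$ (Lemma \ref{lem:lip}) and conserved for general data (Proposition \ref{grouic}), and an estimate modulo $\pi$ cannot bound a drift. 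For the same reason, the localized slope $\int\langle iv,v_x\rangle(x\chi_R)_x$ is not close to $2P(v(t))$ uniformly in $t$: the discrepancy is exactly this far-field phase winding. The paper bounds the localized slope by $K\varepsilon$ pointwise in time by a different device altogether: by Theorem \ref{stationnaire} the solution stays close to the orbit of $\v_0$, and $\langle iu,u_x\rangle\equiv 0$ for every $u=e^{i\theta}\v_0(\cdot-a)$ because $\v_0$ is real-valued (this is \eqref{eq:coucou1}); no conservation of momentum enters the drift estimate itself.

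Second, the conversion $I(t)=2\sqrt2\,a(t)+O(\varepsilon)$: your error term pairs the unbounded weight $y$ against $2|\v_0|r+r^2$, which is only small in $L^2(\R)$ and has no spatial localization, so it is not $O(\varepsilon)$ --- it need not even be finite (only $1-|\v_0|^2$ decays exponentially, not $r$). With the truncation $x\chi_R$, Cauchy--Schwarz gives an error of size $R^{3/2}\varepsilon$, and here is the structural problem: your scheme needs the bump centered at $a(t)$ to sit well inside the truncation region, i.e. $R\gtrsim |a(t)|\sim\varepsilon|t|$, so the truncation radius, and with it the error, must grow with time, and the argument cannot close for large $|t|$. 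The paper resolves this with an idea absent from your outline: it proves only a unit-time estimate $|a(t)-a(0)|\le 5K\varepsilon$ with a \emph{fixed} $R$, together with a continuity bootstrap keeping $|a(t)|\le 1$ (needed for \eqref{eq:jet} and \eqref{eq:coucou2}), and then obtains the linear growth $K\varepsilon(1+|t|)$ by iterating over unit time intervals, restarting the flow from $e^{-i\theta(t_0)}v(\cdot+a(t_0),t_0)$ and using uniqueness and the invariances of \eqref{GP}. So the linear-in-time bound comes from iteration of a local estimate, not from an affine-in-time evolution of a globally defined moment; both pillars of your proposal --- smallness and conservation of $P$, and the global-in-time moment/position identification --- have to be replaced, not merely ``made rigorous by bookkeeping''.
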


In other words, Theorem \ref{vitesse} shows that the speed of the shift is smaller than any arbitrary positive number, provided we are sufficiently close to the kink solution.

\begin{remark}
It would be of interest to obtain a similar piece of information for the quantity $\theta(t)$.
\end{remark}

\begin{remark}
P. G\'erard and Z. Zhang \cite{GeraZha1} have used the inverse scattering method to prove a more precise version of Theorem \ref{stationnaire} in the $L^\infty$-norm, but for a more restricted class of perturbations.
\end{remark}

It is worthwhile mentioning that the stationary solution $\v_0$ belongs to a branch of more general special solutions, namely the branch of travelling wave solutions. Travelling waves are solutions to \eqref{GP} of the form
$$\Psi(x,t) = u(x - c t).$$
Here, the parameter $c \in \R$ corresponds to the speed of the travelling waves (we may restrict to the case $c \geq 0$ using complex conjugation). The case $c > 0$ corresponds to the "gray solitons" in nonlinear optics (see \cite{KivsLut1}). The equation for the profile $u$ is given by
\renewcommand{\theequation}{TWc}
\begin{equation}
\label{TWc}
-i c u_x + u_{xx} + u (1 - |u|^2) = 0.
\end{equation}
Equation \eqref{TWc} is entirely integrable using standard arguments from ordinary differential equation theory. The sound velocity $c_s = \sqrt{2}$ appears naturally in the hydrodynamics formulation of \eqref{GP} (see \cite{BetGrSa2}). If $|c| \geq \sqrt{2}$, $u$ is a constant of modulus one, whereas if $- \sqrt{2} < c < \sqrt{2}$, then, up to a multiplication by a constant of modulus one and a translation, $u$ is either identically equal to $1$, or to
\renewcommand{\theequation}{\arabic{equation}}
\numberwithin{equation}{section}
\setcounter{equation}{6}
\begin{equation}
\label{schumi}
u(x) = \v_c(x) \equiv \sqrt{\frac{2 - c^2}{2}} \th \Big( \frac{\sqrt{2 - c^2}}{2} x \Big)+ i \frac{c}{\sqrt{2}}.
\end{equation}
The non-constant travelling waves form a smooth branch of subsonic solutions to \eqref{TWc}. As seen before, there exist neither sonic, nor supersonic non-constant travelling waves.

In view of identity \eqref{schumi}, we also observe that $\v_c(x)$ does not vanish unless $c = 0$. Moreover, formula \eqref{schumi} yields the spatial asymptotics of the non-constant solutions to \eqref{TWc}. Notice in particular that
$$\v_c(x) \to \v_c^{\pm \infty} \equiv \pm \sqrt{1 - \frac{c^2}{2}} + i \frac{c}{\sqrt{2}}, \ {\rm as} \ x \to \pm \infty.$$
Hence, $\v_c(x)$ converges to a constant $\v_c^{\pm \infty}$ of modulus one, as $x \to \pm \infty$, the limits in $- \infty$ and $+ \infty$ being distinct. Notice also that the function $\v_c - \v_c^{\pm \infty}$ has exponential decay at infinity.

Orbital stability of travelling waves for any $- \sqrt{2} < c < \sqrt{2}$, was established in \cite{LinZhiw1} using a method of \cite{GriShSt1}, and later in \cite{BetGrSa2} relying on a variational principle, combined with several conservation laws. As a matter of fact, travelling wave solutions can be identified with critical points of the energy, keeping the momentum fixed. In this variational interpretation of the equation, the speed $c$ appears as the Lagrange multiplier related to the constraint, i.e. keeping the momentum fixed. The solution $\v_c$ corresponds to minima of the energy for fixed momentum. Our aim here is to extend this variational argument to the case $c = 0$.

The precise mathematical definition of the momentum raises however a serious difficulty, in particular because $\v_0$ vanishes. Recall that in the context of nonlinear Schr\"odinger equations, the momentum of maps $v$ from $\R$ to $\C$ should be defined as
$$P(v) = \frac{1}{2} \int_\R \langle i v, v' \rangle.$$
This quantity is not well-defined for arbitrary maps in the energy space.

To get convinced of this fact, assume that the map $v$ has the form $v = \exp i \varphi$, where $\varphi$ is real-valued, so that $|v| = 1$. For such a map,
$$\langle i v, v' \rangle = \varphi',$$
and the fact that $v$ belongs to the energy space $\boX^1$ is equivalent to the fact that $\varphi'$ belongs to $L^2(\R)$. We then have
$$P(v) = \frac{1}{2} \int_{\R} \varphi' = \frac{1}{2} \big[ \varphi \big]_{- \infty}^{+ \infty} \equiv \frac{1}{2} \big(\varphi(+ \infty) - \varphi(- \infty) \big),$$
which has a meaning if the map $v$ belongs to
$$\boZ^1 = \Big\{ v \in \boX^1, \ {\rm s.t.} \ v_{\pm \infty} = \lim_{x \to \pm \infty} v(x) \ {\rm exist} \Big\},$$ 
but not for any arbitrary phase $\varphi$ whose gradient is in $L^2$.

More generally, given any map $v$ in $\tilde{\boX^1}$, where $\tilde{\boX^1}$ is defined by
$$\tilde{\boX^1} = \{ v \in \boX^1, \ {\rm s.t.} \ |v(x)| >0, \ \forall x \in \R \},$$
we may write $v = \varrho \exp i \varphi$, so that $\langle i v, v' \rangle=\varrho^2\varphi'$. If $v \in \tilde{\boZ^1} \equiv \tilde{\boX^1} \cap \boZ^1$, we are led to
\begin{equation}
\label{goret}
P(v) = \frac{1}{2} \int_\R \varrho^2 \varphi' = \frac{1}{2} \int_\R (\varrho^2 - 1) \varphi' + \frac{1}{2} \int_\R \varphi' = \frac{1}{2} \int_\R (\varrho^2 - 1) \varphi'+ \frac{1}{2} \big[ \varphi \big]_{- \infty}^{+ \infty}.
 \end{equation}
Since $e(v) = \frac{1}{2} (\varrho'^2 + \varrho^2 \varphi'^2) + \frac{1}{4} (1 - \varrho^2)^2$, the first integral may be bounded by Cauchy-Schwarz inequality,
$$\bigg| \int_\R (\varrho^2 - 1) \varphi' \bigg| \leq \frac{1}{2} \int_\R (1 - \varrho^2)^2 + \frac{1}{2 \delta^2} \int_\R \varrho^2 (\varphi')^2 \leq \frac{2}{\delta^2} E(v),$$
where $\delta = \inf\{ |v(x)|, \ x \in \R\} > 0$, so that $P(v)$ is well-defined by formula \eqref{goret}.

It remains to give a meaning to the momentum $P(v)$ for maps having possibly zeroes. We define the momentum $P(v)$ of maps $v$ in $\boZ^1$ as follows.

\begin{lemma}
\label{limite}
Let $v \in \boZ^1$. Then, the limit 
$$\boP(v) = \underset{R \to + \infty}{\lim} P_R(v) \equiv \lim_{R\to +\infty} \int_{- R}^{R} \langle i v, v' \rangle$$
exists. Moreover, if $v$ belongs to $\tilde{\boZ^1}$, then
$$\boP(v) = \frac{1}{2} \int_\R (\varrho^2 - 1) \varphi'+ \frac{1}{2} \big[ \varphi \big]_{- \infty}^{+ \infty}.$$
\end{lemma}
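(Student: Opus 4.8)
The plan is to reduce the question to the behaviour of $v$ near $\pm\infty$, where a polar decomposition is available, and to dispose of the central region by a crude compactness argument. First I would record that, since $1-|v|^2\in L^2(\R)$ and the limits $v_{\pm\infty}=\lim_{x\to\pm\infty}v(x)$ exist by hypothesis, these limits must have modulus one; hence $|v(x)|\to 1$ and there is some $X>0$ with $|v(x)|\geq \frac12$ for $|x|\geq X$. On the compact interval $[-X,X]$ the integrand $\langle iv,v'\rangle$ lies in $L^1$, since $v$ is bounded and $v'\in L^2(\R)\subset L^1_{\mathrm{loc}}(\R)$, so this part contributes a fixed finite constant and is irrelevant to the convergence. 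It therefore suffices to show that $\int_X^R\langle iv,v'\rangle$ and $\int_{-R}^{-X}\langle iv,v'\rangle$ converge as $R\to+\infty$.

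On each unbounded end $v$ does not vanish, so I would write $v=\varrho\exp i\varphi$ with $\varrho=|v|\geq\frac12$ and $\varphi$ a continuous (hence $H^1_{\mathrm{loc}}$) lift of $v/|v|$, and use the identity $\langle iv,v'\rangle=\varrho^2\varphi'$. The key is then the splitting $\varrho^2\varphi'=(\varrho^2-1)\varphi'+\varphi'$. Here $\varrho^2-1\in L^2$ by hypothesis, while $\varphi'\in L^2([X,+\infty))$ because $\varrho\varphi'\in L^2$ (it is controlled by $e(v)$) and $\varrho\geq\frac12$; by Cauchy--Schwarz $(\varrho^2-1)\varphi'\in L^1([X,+\infty))$ and its integral converges absolutely. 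For the remaining term, $\int_X^R\varphi'=\varphi(R)-\varphi(X)$, so everything reduces to showing that $\varphi(R)$ admits a limit as $R\to+\infty$.

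This last point is the crux of the argument, and the only place where the defining property of $\boZ^1$ is genuinely used, since $\varphi'\in L^2$ by itself does not force $\varphi$ to converge. However, $\exp i\varphi(x)=v(x)/\varrho(x)\to v_{+\infty}$, a fixed point of the unit circle, so $\exp i\varphi$ converges; writing $v_{+\infty}=\exp i\alpha$, for any small $\eta>0$ the function $\varphi$ is eventually confined to the set where $\mathrm{dist}(\varphi(x),\alpha+2\pi\Z)<\eta$, and by continuity and connectedness this set is a single interval $(\alpha+2\pi n_0-\eta,\alpha+2\pi n_0+\eta)$. Letting $\eta\to 0$ gives $\varphi(x)\to\alpha+2\pi n_0=:\varphi_{+\infty}$, and the same argument at $-\infty$ produces $\varphi_{-\infty}$. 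Combining the three contributions yields the existence of $\boP(v)$.

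Finally, for $v\in\tilde{\boZ^1}$ the map $v$ never vanishes, so the polar decomposition $v=\varrho\exp i\varphi$ holds on all of $\R$ with $\varrho\geq\delta:=\inf_x|v(x)|>0$; then $\varphi'\in L^2(\R)$ globally, $(\varrho^2-1)\varphi'\in L^1(\R)$, and integrating $\langle iv,v'\rangle=\varrho^2\varphi'=(\varrho^2-1)\varphi'+\varphi'$ over $\R$ and passing to the limit gives the announced identity, the boundary term $[\varphi]_{-\infty}^{+\infty}$ being legitimate thanks to the phase limits $\varphi_{\pm\infty}$ just constructed. The main obstacle throughout is precisely this convergence of the phase at infinity; away from it all the estimates are routine Cauchy--Schwarz bounds by the energy.
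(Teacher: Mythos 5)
Your proof is correct and follows essentially the same route as the paper's: a polar factorization $v=\varrho\exp i\varphi$ on the two ends where $|v|\geq\tfrac12$, the splitting $\varrho^{2}\varphi'=(\varrho^{2}-1)\varphi'+\varphi'$, a Cauchy--Schwarz/energy bound for the first term and convergence of the phase for the second. The differences are only presentational: you spell out the lifting and phase-convergence argument that the paper treats as immediate from $v\in\boZ^1$, and you phrase the conclusion as absolute convergence of the tail integrals rather than as a Cauchy criterion for $P_R(v)$.
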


In other words, using the definition provided by Lemma \ref{limite}, we have defined the momentum as an {\bf improper} integral. It offers however a sound mathematical formulation of the momentum $P$, at least if one restricts oneself to the space $\boZ^1$. We illustrate our previous constructions with the kink solution $\v_0$. Since $\v_0$ is real-valued, we have $\langle i \v_0, \v_0' \rangle = 0$, so that
$$\boP(\v_0) = 0.$$
In several computations, we use the following elementary observation.

\begin{lemma}
\label{andouille}
Let $V_0 \in \boZ^1$ and $w \in H^1(\R)$. Then, $V_0 + w \in \boZ^1$ and
\begin{equation}
\label{andouille1}
\boP(V_0 + w) = \boP(V_0) + \frac{1}{2} \int_\R\langle i w, w' \rangle + \int_\R \langle i w, V_0' \rangle.
\end{equation}
\end{lemma}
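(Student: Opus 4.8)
The plan is to verify first that $V_0 + w$ indeed belongs to $\boZ^1$, and then to compute $\boP(V_0 + w)$ directly from its definition as an improper integral (Lemma \ref{limite}), by expanding the integrand through bilinearity and treating each of the resulting pieces separately. The crux will be the single cross term whose integrand is not a priori integrable.

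For the membership, recall that in dimension one $H^1(\R) \hookrightarrow L^\infty(\R)$ and that every $w \in H^1(\R)$ tends to $0$ at $\pm \infty$. Hence $V_0 + w \in L^\infty(\R)$, $(V_0 + w)' = V_0' + w' \in L^2(\R)$, and writing $1 - |V_0 + w|^2 = (1 - |V_0|^2) - 2 \langle V_0, w \rangle - |w|^2$ one checks that each summand lies in $L^2(\R)$: the first because $V_0 \in \boX^1$, the second because $V_0 \in L^\infty$ and $w \in L^2$, and the third because $w \in H^1(\R) \hookrightarrow L^4(\R)$. Thus $V_0 + w \in \boX^1$, and since $\lim_{x \to \pm \infty} (V_0 + w)(x) = V_0^{\pm \infty}$ exists, we conclude $V_0 + w \in \boZ^1$.

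To obtain the formula, I would expand the integrand by bilinearity of $\langle \cdot, \cdot \rangle$:
\begin{equation*}
\langle i(V_0 + w), V_0' + w' \rangle = \langle i V_0, V_0' \rangle + \langle i w, w' \rangle + \langle i w, V_0' \rangle + \langle i V_0, w' \rangle .
\end{equation*}
Integrating over $[-R, R]$ and sending $R \to + \infty$, the first term converges to $\boP(V_0)$ by the very definition of $\boP$, while the second and third are honest integrals: $\langle i w, w' \rangle$ and $\langle i w, V_0' \rangle$ both lie in $L^1(\R)$, as products of $L^2$ functions ($w, w', V_0' \in L^2(\R)$), so they converge to $\tfrac12\int_\R \langle i w, w' \rangle$ and $\tfrac12\int_\R \langle i w, V_0' \rangle$ respectively (with the normalisation of Lemma \ref{limite}).

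The delicate term, and the main obstacle, is the cross term $\int_{-R}^R \langle i V_0, w' \rangle$: since $V_0$ is merely bounded and $w'$ is only in $L^2(\R)$, the integrand need not be integrable, so one cannot pass to the limit directly. I would resolve this by integrating by parts, $\int_{-R}^R \langle i V_0, w' \rangle = \big[ \langle i V_0, w \rangle \big]_{-R}^R - \int_{-R}^R \langle i V_0', w \rangle$. The boundary term tends to $0$ because $w(\pm R) \to 0$ while $V_0$ remains bounded; and invoking the antisymmetry $\langle i V_0', w \rangle = - \langle i w, V_0' \rangle$ of the real bilinear form $\langle i \cdot, \cdot \rangle$ on $\C$, together with the integrability of $\langle i w, V_0' \rangle$ already noted, the interior integral converges to $\tfrac12\int_\R \langle i w, V_0' \rangle$. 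Collecting the four contributions then yields precisely $\boP(V_0 + w) = \boP(V_0) + \tfrac12 \int_\R \langle i w, w' \rangle + \int_\R \langle i w, V_0' \rangle$, the coefficient $1$ on the last term arising from the cross term and the term $\langle i w, V_0' \rangle$ adding up.
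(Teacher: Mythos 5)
Your proof is correct and follows essentially the same route as the paper's: the same bilinear expansion of $\langle i(V_0+w),(V_0+w)'\rangle$, with the non-integrable cross term $\langle i V_0, w'\rangle$ handled by integration by parts, the boundary term killed by $w(\pm R)\to 0$, and antisymmetry of $\langle i\cdot,\cdot\rangle$ converting the remaining integral into a second copy of $\int\langle i w, V_0'\rangle$. The membership argument (expanding $1-|V_0+w|^2$ and using $H^1(\R)\hookrightarrow L^\infty\cap L^4$) is also the paper's argument in a mildly different but equivalent form.
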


Notice that the right-hand side of identity \eqref{andouille1} involves, besides $\boP(V_0)$, only {\bf definite} integrals: this is an important advantage for establishing the corresponding conservation laws.

Another quantity which plays an important role in the variational formulation of \eqref{TWc} for $c \neq 0$ is the renormalized momentum $p$, which is defined for $v \in \tilde{\boX_1}$ by
\begin{equation}
\label{defnormmo}
p(v) = \frac{1}{2} \int_\R (\varrho^2 - 1) \varphi',
\end{equation}
so that, as seen before, if $v$ belongs to $\tilde{\boZ^1}$, then,
$$p(v) = \boP(v) - \frac{1}{2} \big[ \varphi \big]_{- \infty}^{+ \infty}.$$
If $v \in \boZ^1 \setminus \tilde{\boZ^1}$, the right-hand side of \eqref{defnormmo} is a priori not well-defined since the phase $\varphi$ is not globally defined. Nevertheless, the argument $\arg{v}$ of $v$ is well-defined at infinity as an element of the quotient space $\R/ 2\pi\Z$. Given $v \in \boZ^1$, we are therefore led to introduce the untwisted momentum
$$[p](v) = \Big( \boP(v) - \frac{1}{2} \big( \arg{v(+ \infty)} - \arg{v(- \infty)} \big) \Big) \ {\rm mod} \ \pi,$$
which is hence an element of $\R/ \pi\Z$. A remarkable fact concerning $[p]$ is that its definition extends to the whole space $\boX^1$, although for arbitrary maps in $\boX^1$, the quantity $\arg{v(+\infty)}-\arg{v(-\infty)}$ may not exist. Indeed, we have

\begin{lemma}
\label{limite2}
Assume that $v$ belongs to $\boX^1$. Then the limit
$$[p](v) = \lim_{R \to +\infty} \bigg( \int_{- R}^R \langle i v, v' \rangle - \frac{1}{2} \big( \arg{v(R)} - \arg{v(-R)} \big) \bigg) \ {\rm mod} \ \pi$$
exists. Moreover, if $v$ belongs to $\tilde{\boX^1}$, then
\begin{equation}
\label{bacon2}
[p](v) = p(v) \ {\rm mod} \ \pi.
\end{equation}
\end{lemma}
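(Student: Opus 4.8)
The plan is to localize the argument to the two half-lines where $v$ does not vanish, to isolate the boundary contribution of the phase, and then to check that this (possibly divergent) boundary contribution is annihilated modulo $\pi$ by the renormalizing term $\tfrac12(\arg v(R) - \arg v(-R))$. Throughout I write $P_R(v) = \tfrac12 \int_{-R}^{R} \langle iv, v' \rangle$ for the partial momentum, consistent with the normalization $P(v) = \tfrac12 \int_\R \langle iv, v' \rangle$ of the introduction. First I would record that any $v \in \boX^1$ satisfies $|v(x)| \to 1$ as $|x| \to +\infty$: indeed $1 - |v|^2 \in L^2(\R)$, while $|v|^2$ is uniformly continuous (from $v \in L^\infty$ and $v' \in L^2$ one gets $||v(x)|^2 - |v(y)|^2| \leq 2 \|v\|_{L^\infty} \|v'\|_{L^2} |x-y|^{1/2}$), and a uniformly continuous $L^2$ function tends to $0$ at infinity. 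Hence there is $R_0 > 0$ with $|v| \geq \tfrac12$ on $\{|x| \geq R_0\}$, and on each of $[R_0, +\infty)$ and $(-\infty, -R_0]$ we may write $v = \varrho \exp(i\varphi)$ with $\varrho, \varphi \in \boC^1$ and $\varphi$ a continuous determination of the phase, so that $\arg v(x) = \varphi(x) \bmod 2\pi$. I would also note that, since $\arg v(\pm R) \in \R/2\pi\Z$, the quantity $\tfrac12(\arg v(R) - \arg v(-R))$ is well-defined in $\R/\pi\Z$, which is what makes the whole expression meaningful modulo $\pi$.

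Next I would treat the convergent part. On $[R_0, R]$ one has $\langle iv, v' \rangle = \varrho^2 \varphi'$, hence
$$ \tfrac12 \int_{R_0}^{R} \langle iv, v' \rangle = \tfrac12 \int_{R_0}^{R} (\varrho^2 - 1) \varphi' + \tfrac12 \big( \varphi(R) - \varphi(R_0) \big), $$
and similarly on $[-R, -R_0]$. The crucial observation is that $\int_{R_0}^{+\infty} (\varrho^2 - 1) \varphi'$ converges absolutely: by Cauchy--Schwarz it is bounded by $\|1 - \varrho^2\|_{L^2} \, \|\varphi'\|_{L^2([R_0, +\infty))}$, and $\|\varphi'\|_{L^2([R_0, +\infty))}^2 \leq 4 \int \varrho^2 \varphi'^2 \leq 8 E(v) < +\infty$ since $\varrho \geq \tfrac12$ there. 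The same estimate holds on the left half-line, while $\int_{-R_0}^{R_0} \langle iv, v' \rangle$ is a fixed constant for $R \geq R_0$.

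Collecting the three pieces and subtracting the renormalization, every term is either constant in $R$ or convergent as $R \to +\infty$, except the two boundary phase terms $\tfrac12 \varphi(R)$ and $-\tfrac12 \varphi(-R)$; these are exactly cancelled modulo $\pi$. Indeed, writing $\arg v(R) = \varphi(R) - 2\pi n(R)$ with $n(R) \in \Z$, one gets $\tfrac12 \varphi(R) - \tfrac12 \arg v(R) = \pi n(R) \equiv 0 \pmod{\pi}$, and likewise at $-R$. Consequently
$$ P_R(v) - \tfrac12 \big( \arg v(R) - \arg v(-R) \big) \equiv c_0 + \tfrac12 \int_{R_0}^{R} (\varrho^2 - 1) \varphi' + \tfrac12 \int_{-R}^{-R_0} (\varrho^2 - 1) \varphi' \pmod{\pi}, $$
where $c_0$ gathers the $R$-independent constants. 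Letting $R \to +\infty$ and using the absolute convergence above, the right-hand side converges in $\R/\pi\Z$, which establishes the existence of $[p](v)$.

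Finally, when $v \in \tilde{\boX^1}$ the map $v$ never vanishes, so a single continuous phase $\varphi$ is available on all of $\R$, the splitting at $\pm R_0$ becomes unnecessary, and the same cancellation gives $P_R(v) - \tfrac12(\arg v(R) - \arg v(-R)) \equiv \tfrac12 \int_{-R}^{R} (\varrho^2 - 1) \varphi' \pmod{\pi}$. Invoking the absolute convergence of $\int_\R (\varrho^2 - 1) \varphi'$ already proved in the excerpt (with $\delta = \inf |v| > 0$) and letting $R \to +\infty$ yields $[p](v) = \tfrac12 \int_\R (\varrho^2 - 1) \varphi' = p(v) \bmod \pi$, which is \eqref{bacon2}. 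The main obstacle here is conceptual rather than computational: one must recognize that the boundary phase $\varphi(\pm R)$ genuinely need not converge for $v \in \boX^1$, and that it is precisely the half-argument renormalization together with reduction modulo $\pi$ that neutralizes this divergence; the hypothesis $v \in \boX^1$ enters only through $|v| \to 1$, which supplies a nonvanishing region at infinity on which the phase, and hence the absolutely convergent integral $\int (\varrho^2 - 1) \varphi'$, is defined.
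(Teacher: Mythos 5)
Your proof is correct and follows essentially the same route as the paper: split the line at a point beyond which $|v| \geq \frac{1}{2}$, write $\varrho^2 \varphi' = (\varrho^2 - 1)\varphi' + \varphi'$ on each half-line, and observe that the boundary phase terms cancel the $\arg$ renormalization modulo $\pi$, leaving only integrals of $(\varrho^2 - 1)\varphi'$ whose convergence gives the limit. The only (immaterial) difference is that you obtain integrability of $(\varrho^2 - 1)\varphi'$ at infinity via Cauchy-Schwarz, whereas the paper verifies a Cauchy criterion in $R$ using the pointwise bound $|(\varrho^2 - 1)\varphi'| \leq 4 e(v)$.
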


Similarly to Lemma \ref{andouille}, we have

\begin{lemma}
\label{lem:andouille2}
Let $V_0 \in \boX^1$ and $w \in H^1(\R)$. Then, $V_0 + w \in \boX^1$ and
\begin{equation}
\label{eq:andouille2}
[p](V_0 + w) = [p](V_0) + \frac{1}{2} \int_\R \langle i w, w' \rangle + \int_\R \langle i w, V_0' \rangle \ {\rm mod} \ \pi.
\end{equation}
\end{lemma}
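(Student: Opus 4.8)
The plan is to follow the computation behind Lemma \ref{andouille}, reducing the statement to the existence result of Lemma \ref{limite2}; the only genuinely new ingredient, compared with the $\boZ^1$ case, is the treatment of the boundary arguments.

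First I would verify that $V_0 + w \in \boX^1$. Since $H^1(\R) \hookrightarrow L^\infty(\R)$ in dimension one, we have $w \in L^\infty(\R)$, hence $V_0 + w \in L^\infty(\R)$ and $(V_0 + w)' = V_0' + w' \in L^2(\R)$. Writing
$$1 - |V_0 + w|^2 = (1 - |V_0|^2) - 2 \langle V_0, w \rangle - |w|^2,$$
each term belongs to $L^2(\R)$, using $1 - |V_0|^2 \in L^2(\R)$, $V_0, w \in L^\infty(\R)$ and $w \in L^2(\R)$. Thus $V_0 + w \in \boX^1$ and, by Lemma \ref{limite2}, $[p](V_0 + w)$ is well-defined. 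I would also record the elementary fact that any $u \in \boX^1$ satisfies $|u(x)| \to 1$ as $|x| \to +\infty$: indeed $1 - |u|^2 \in L^2(\R)$ and $(1 - |u|^2)' = -2 \langle u, u' \rangle \in L^2(\R)$, so $1 - |u|^2 \in H^1(\R)$ is continuous and vanishes at infinity. In particular $|V_0|$ and $|V_0 + w|$ stay bounded away from $0$ near $\pm \infty$, so the arguments appearing in Lemma \ref{limite2} are well-defined there.

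Next, for $R$ large I would expand the truncated integrand into four pieces,
$$\langle i (V_0 + w), (V_0 + w)' \rangle = \langle i V_0, V_0' \rangle + \langle i V_0, w' \rangle + \langle i w, V_0' \rangle + \langle i w, w' \rangle,$$
and handle the two cross terms by the product rule
$$\frac{d}{dx} \langle i V_0, w \rangle = \langle i V_0, w' \rangle - \langle i w, V_0' \rangle,$$
which rests on the antisymmetry $\langle i a, b \rangle = - \langle i b, a \rangle$. Integrating over $[-R, R]$ converts $\int_{-R}^R \langle i V_0, w' \rangle$ into $\int_{-R}^R \langle i w, V_0' \rangle$ plus the boundary term $\langle i V_0(R), w(R) \rangle - \langle i V_0(-R), w(-R) \rangle$; since $w$ vanishes at infinity and $V_0 \in L^\infty(\R)$, this boundary term tends to $0$ as $R \to +\infty$, and the two cross terms coalesce into a single copy of $\int_\R \langle i w, V_0' \rangle$. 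This is precisely the manipulation used for Lemma \ref{andouille}, and it accounts for the last two summands of \eqref{eq:andouille2}; note that $\int_\R \langle i w, V_0' \rangle$ and $\int_\R \langle i w, w' \rangle$ converge absolutely by Cauchy-Schwarz, as $w, w', V_0' \in L^2(\R)$.

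The main obstacle, and the only place where passing from $\boZ^1$ to $\boX^1$ really matters, is the boundary argument term. Here I would compare $\arg(V_0 + w)(\pm R)$ with $\arg V_0(\pm R)$: because $w(\pm R) \to 0$ while $|V_0(\pm R)| \to 1$, the points $(V_0 + w)(\pm R)$ and $V_0(\pm R)$ approach each other in a fixed annulus around the unit circle, so the difference of their continuous arguments tends to $0$. Hence
$$\arg(V_0 + w)(R) - \arg(V_0 + w)(-R) = \big( \arg V_0(R) - \arg V_0(-R) \big) + o(1), \quad R \to +\infty.$$
Assembling the terms in the defining limit for $[p](V_0 + w)$, the contribution of $\langle i V_0, V_0' \rangle$ together with $\arg V_0(\pm R)$ converges, modulo $\pi$, to $[p](V_0)$ by Lemma \ref{limite2}; the boundary term and the $o(1)$ disappear; and the definite integrals yield $\int_\R \langle i w, V_0' \rangle + \frac{1}{2} \int_\R \langle i w, w' \rangle$. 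Letting $R \to +\infty$ and reducing modulo $\pi$ gives \eqref{eq:andouille2}. I expect the delicate point to be the bookkeeping modulo $\pi$: neither $\int_{-R}^R \langle i V_0, V_0' \rangle$ nor $\arg V_0(\pm R)$ need converge on their own, and only their prescribed combination stabilises, so one must keep the arguments along consistent continuous determinations near $+\infty$ and near $-\infty$ throughout.
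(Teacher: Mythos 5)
Your proposal is correct and follows essentially the same route as the paper: the paper's proof simply repeats the computation of Lemma \ref{andouille} (the four-term expansion and integration by parts, with boundary terms vanishing by $w(\pm R) \to 0$) and then subtracts $\frac{1}{2}\bigl(\arg(V_0+w)(R) - \arg(V_0+w)(-R)\bigr)$, noting exactly as you do that $\arg(V_0+w)(\pm R) - \arg V_0(\pm R) \to 0$ since $w$ vanishes at infinity while $|V_0| \to 1$. Your extra remarks (the direct verification that $V_0 + w \in \boX^1$ via $1-|V_0+w|^2 \in L^2$, and the observation that only the combined quantity in Lemma \ref{limite2} stabilises modulo $\pi$) are accurate refinements of the same argument.
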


In some places, when this does not lead to a confusion, we will identify elements of $\R / \pi \Z$ with their unique representative in the interval $]-\frac{\pi}{2}, \frac{\pi}{2}]$. 

In \cite{BetGrSa2}, the following minimization problem
\begin{equation}
\label{cochette1}
E_{\min}(\p) = \inf\{ E(v), v \in \tilde{\boX^1} \ {\rm s.t.} \ p(v) = \p \},
\end{equation}
was considered and solved for any $\p \in [0, \frac{\pi}{2})$.

\begin{lemma}
\label{tildeemin}
The function $\p \mapsto E_{\min}(\p)$ is non-decreasing and concave on $[0, \frac{\pi}{2})$. Moreover, given any $0 \leq \p < \frac{\pi}{2}$, problem \eqref{cochette1} is achieved by a unique minimizer, up to the invariances of the problem, the map $\v_{c(\p)}$. Here, $c(\p)$ denotes the unique speed $c$ such that $p(\v_c) = \p$.
\end{lemma}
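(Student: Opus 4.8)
The plan is to take advantage of the fact that the candidate minimisers $\v_c$ are explicit, so that the quantitative statements can be read off from \eqref{schumi} while the real work is to show that no competitor does strictly better. Setting $a=a(c)=\sqrt{1-c^2/2}$, a direct computation from \eqref{schumi} gives
$$E(\v_c)=\frac{2\sqrt2}{3}\,a^3,\qquad p(\v_c)=\arctan\Big(\frac{a\sqrt2}{c}\Big)-\frac{ac}{\sqrt2},$$
whence $\frac{d}{dc}E(\v_c)=-\sqrt2\,ac$ and $\frac{d}{dc}p(\v_c)=-\sqrt2\,a$. The second quantity is strictly negative on $(0,\sqrt2)$, so $c\mapsto p(\v_c)$ is a strictly decreasing bijection of $(0,\sqrt2]$ onto $[0,\frac\pi2)$; this defines $c(\p)$ and proves the last assertion of the lemma. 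Dividing the two derivatives gives the Hamilton relation $\frac{dE(\v_c)}{dp(\v_c)}=c$. Granting the identification $E_{\min}(\p)=E(\v_{c(\p)})$ proved below, the chain rule (using $p(\v_{c(\p)})=\p$) then yields $E_{\min}'(\p)=c(\p)>0$ and $E_{\min}''(\p)=c'(\p)=-1/(\sqrt2\,a)<0$, which is exactly the monotonicity and the concavity.

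The heart of the matter is thus the equality $E_{\min}(\p)=E(\v_{c(\p)})$ together with the rigidity of the minimiser, and I would first reduce it to a problem on the modulus alone. Writing a competitor $v=\varrho\exp(i\varphi)\in\tilde{\boX^1}$ with $p(v)=\p$, the phase enters the energy only through $\frac12\int_\R\varrho^2(\varphi')^2$ and enters the constraint only through $p(v)=\frac12\int_\R(\varrho^2-1)\varphi'$. Minimising this quadratic functional of $\varphi'$ under the single linear constraint is explicit: the optimal phase obeys $\varrho^2\varphi'=\frac{c}{2}(\varrho^2-1)$ for a multiplier $c$, and after substitution one is left with the reduced functional
$$\tilde E_\p(\varrho)=\int_\R\Big(\frac12(\varrho')^2+\frac14(1-\varrho^2)^2\Big)+\frac{2\p^2}{\displaystyle\int_\R(1-\varrho^2)^2/\varrho^2},$$
the last, nonlocal, term being the contracted phase contribution and the multiplier being $c=4\p/\int_\R(1-\varrho^2)^2/\varrho^2$. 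One checks that $\varrho=|\v_{c(\p)}|$ is a critical point with multiplier $c(\p)$, and that reconstructing $\varphi$ from the optimality relation returns exactly $\v_{c(\p)}$, up to a translation and a constant phase.

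It remains to show that $\tilde E_\p$ attains its infimum and that the minimiser is unique up to translation, and I expect this compactness step to be the main obstacle. The local part controls a minimising sequence $\varrho_n$ in $H^1_{loc}$ and $1-\varrho_n^2$ in $L^2$, but the nonlocal term is lower semicontinuous in the wrong direction, so one genuinely needs the strong convergence of $1-\varrho_n^2$ in $L^2$ to pass to the limit. I would obtain it by concentration-compactness: vanishing, i.e. $\varrho_n$ relaxing back to the constant $1$, is excluded because it forces $\int_\R(1-\varrho_n^2)^2/\varrho_n^2\to0$ and hence $\tilde E_\p\to+\infty$; dichotomy is excluded by the strict subadditivity $E_{\min}(\p)<E_{\min}(\p_1)+E_{\min}(\p-\p_1)$, which I would establish directly by a comparison argument rather than from the (a posteriori) concavity. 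After translating so that the smallest value of $\varrho_n$ sits at the origin, the compact case produces a minimiser $\varrho_*$, and one must still verify $\inf_\R\varrho_*>0$ so that the reconstructed map lies in $\tilde{\boX^1}$. Uniqueness and the identification $\varrho_*=|\v_{c(\p)}|$ finally follow from the Euler--Lagrange equation for $\tilde E_\p$, an autonomous second order ODE whose first integral singles out the unique orbit homoclinic to $\varrho=1$ with multiplier $c(\p)$; reconstructing the phase returns $\v_{c(\p)}$, up to translation and constant phase, which closes the proof.
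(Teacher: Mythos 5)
First, a point of comparison: the paper itself contains \emph{no} proof of Lemma \ref{tildeemin} --- it is quoted from \cite{BetGrSa2} --- so your proposal can only be judged on its own terms. Its computational skeleton is correct and consistent with the Remark following the lemma: the formulas for $E(\v_c)$ and $p(\v_c)$, the derivatives $\frac{d}{dc}E(\v_c)=-\sqrt{2}\,ac$ and $\frac{d}{dc}p(\v_c)=-\sqrt{2}\,a$, the relation $\frac{dE}{dp}=c$, and the deduction of monotonicity and strict concavity \emph{once} the identification $E_{\min}(\p)=E(\v_{c(\p)})$ is granted. The phase-elimination step is also sound: for fixed modulus the optimal phase satisfies $\varrho^2\varphi'=\frac{c}{2}(\varrho^2-1)$, and your reduced functional $\tilde E_\p$ with its nonlocal term is the right object. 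The gaps lie in the compactness core, which is precisely where the lemma is hard.

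Your exclusion of vanishing is wrong as stated. In the concentration-compactness trichotomy, vanishing means $\sup_y\int_{y-R}^{y+R}e(u_n)\to 0$ for every $R$; this forces $\|1-\varrho_n^2\|_{L^\infty}\to 0$, but \emph{not} $\int_\R(1-\varrho_n^2)^2/\varrho_n^2\to 0$. Take $1-\varrho_n^2(x)=n^{-1/2}\phi(x/n)$ for a fixed bump $\phi$, equipped with its optimal phase: this sequence vanishes, yet $\int_\R(1-\varrho_n^2)^2/\varrho_n^2\to\int_\R\phi^2>0$, so the nonlocal term stays bounded and $\tilde E_\p(\varrho_n)$ tends to the finite value $\frac14\int_\R\phi^2+2\p^2/\int_\R\phi^2\geq\sqrt{2}\,\p$. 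Such spreading sequences are the actual enemy; ruling them out requires the strict inequality $E_{\min}(\p)<\sqrt{2}\,\p$ (the minimizing curve lies strictly below its tangent line at the origin), which you never establish --- it does follow from the explicit trial map, since $E(\v_{c(\p)})=\int_0^{\p}c(\p')\,d\p'<\sqrt{2}\,\p$, but this step must be made, and it is not a blow-up of the nonlocal term.

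More seriously, the exclusion of dichotomy rests entirely on the strict subadditivity $E_{\min}(\p)<E_{\min}(\p_1)+E_{\min}(\p-\p_1)$, which you only announce (``a comparison argument''), correctly observing that you cannot invoke concavity since in your scheme concavity comes a posteriori. This strict subadditivity is the crux of any concentration-compactness proof of this lemma: splitting a competitor into distant pieces perturbs the momenta of the pieces, and quantifying the strict energy loss is where the real work lies; nothing in your text indicates how it would go. Together with the deferred positivity $\inf_\R\varrho_*>0$ (which is fixable: by Lemma \ref{kinkmini} a map whose modulus vanishes costs at least $\frac{2\sqrt{2}}{3}>E(\v_{c(\p)})$ for $\p<\frac{\pi}{2}$), these omissions mean the existence and uniqueness assertions remain unproven: what you have is a credible and well-organized plan, not a proof.
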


\begin{remark}
Since the map $\v_c$ has no zero for $0 < c < \sqrt{2}$, and hence belongs to the space $\tilde{\boX^1}$, we may compute its momentum $p(\v_c)$. A short computation yields
$$p(\v_c) = \frac{\pi}{2} - \arctan \Big( \frac{c}{\sqrt{2 - c^2}} \Big) - \frac{c}{2} \sqrt{2 - c^2},$$
whereas the energy of $\v_c$ is equal to
\begin{equation}
\label{piquet}
E(\v_c) = \frac{(2 - c^2)^\frac{3}{2}}{3}.
\end{equation}
We notice that the function $c \mapsto p(\v_c)$ is smooth, decreasing, and satisfies 
\begin{equation}
\label{monaco}
\frac{\rm d}{\rm dc} p(\v_c) = - \sqrt{2 - c^2}.
\end{equation}
Hence, it performs a diffeomorphism from $(0, \sqrt{2})$ on $(0, \frac{\pi}{2})$, so that there exists a unique speed $c(\p)$ such that $p(\v_{c(\p)}) = \p$. Hence, we can express $E(\v_c) \equiv \boE(p(\v_c))$ as a function of $ p(\v_c)$ to obtain the following graph.

\bigskip

\begin{center}
\begin{picture}(90,50)(0,0)
\linethickness{0.2mm}
\put(10,10){\line(1,0){80}}
\put(90,10){\vector(1,0){0.12}}
\linethickness{0.2mm}
\put(10,10){\line(0,1){40}}
\put(10,50){\vector(0,1){0.12}}
\linethickness{0.2mm}
{\color{blue}
\qbezier(8.7,10)(8.77,10.35)(10.25,13.92)
\qbezier(10.25,13.92)(11.72,17.78)(13.7,20.5)
\qbezier(13.7,20.5)(16.8,24)(20.61,26.7)
\qbezier(20.61,26.7)(24.42,29.16)(28.7,31)
\qbezier(28.7,31)(34.21,33.2)(41.14,34.4)
\qbezier(41.14,34.4)(48.06,34.98)(48.7,35)
}
\linethickness{0.1mm}
{\color{cyan}
\multiput(46.3,10)(0,1.82){14}{\line(0,1){0.91}}
}
\linethickness{0.1mm}
{\color{cyan}
\multiput(3.6,35)(2,0){20}{\line(1,0){1}}
}
\put(0,5){\makebox(0,0)[cc]{$0$}}
\put(0,50){\makebox(0,0)[cc]{$E$}}
\put(83,5){\makebox(0,0)[cc]{$p$}}
{\color{cyan}
\put(44,5){\makebox(0,0)[cc]{$\frac{\pi}{2}$}}
\put(0,35){\makebox(0,0)[cc]{$\frac{2 \sqrt{2}}{3}$}}
}
{\color{blue}
\put(45,38){\makebox(0,0)[cc]{$E = \boE(p) = E_{\min}(p)$}}
}
\end{picture}
\end{center}
By Lemma \ref{tildeemin}, the curve $p \mapsto \boE(p)$ is identically equal to the minimizing curve $p \mapsto E_{\min}(p)$. It is a smooth, increasing and strictly concave curve, which lies below the line $E = \sqrt{2} p$. Each point of the curve represents a non-constant solution $\v_c$ to \eqref{TWc} of energy $E(\v_c)$ and scalar momentum $p(\v_c)$. The speed of the solution $\v_c$ (and as a result, its position on the curve) is given by the slope of the curve. Indeed, it follows from \eqref{piquet} and \eqref{monaco} that
$$\frac{{\rm d} \boE}{{\rm d} p} \big( p(\v_c) \big) = \frac{\rm d}{{\rm d} c} \Big( E(\v_c) \Big) \bigg( \frac{\rm d}{\rm dc} p(\v_c) \bigg)^{-1} = c.$$
\end{remark}

\begin{remark}
Since $p(\overline{v}) = - p(v)$ for any function $v \in \tilde{\boX^1}$, it follows from Lemma \ref{tildeemin} that, given any $- \frac{\pi}{2} < \p < 0$, problem \eqref{cochette1} is achieved by a unique minimizer, up to the invariances of the problem, the map $\v_{- c(\p)}$, where $c(\p)$ denotes the unique speed $c$ such that $p(\v_c) = - \p$.
\end{remark}

We emphasize that the definition of the normalized momentum $p$ is restricted to maps having no zeroes, and therefore not to $\v_0$. To define a minimization problem similar to \eqref{cochette1}, we make use of the untwisted momentum $[p]$ and consider the quantity
\begin{equation}
\label{porcelet}
\E_{\min} \Big( \frac{\pi}{2} \Big) \equiv \inf \big\{ E(v), v \in \boX^1 \ {\rm s.t.} \ [p](v) = \frac{\pi}{2} \ {\rm mod} \ \pi \big\}.
\end{equation}
We will prove

\begin{lemma}
\label{verrat}
The infimum $\E_{\min}(\frac{\pi}{2})$ is achieved by the map $\v_0$, which is the only minimizer up to the invariances of problem \eqref{porcelet}.
\end{lemma}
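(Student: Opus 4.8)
The plan is to establish the two-sided bound $\E_{\min}(\frac{\pi}{2})=E(\v_0)=\frac{2\sqrt{2}}{3}$ and then to identify the minimizers. First I would check that $\v_0$ is itself an admissible competitor, which simultaneously gives the upper bound and exhibits a point realizing the value. Since $\v_0\in\boZ^1$ is real-valued with $\v_0(\pm\infty)=\pm 1$, we have $\boP(\v_0)=0$, $\arg\v_0(+\infty)=0$ and $\arg\v_0(-\infty)=\pi$, whence $[p](\v_0)=\big(0-\tfrac12(0-\pi)\big)\ {\rm mod}\ \pi=\frac{\pi}{2}\ {\rm mod}\ \pi$. As $E(\v_0)=\frac{2\sqrt{2}}{3}$, this yields $\E_{\min}(\frac{\pi}{2})\le\frac{2\sqrt{2}}{3}$ and shows the infimum is attained at $\v_0$ provided the matching lower bound holds.

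For the reverse inequality the idea is to reduce the problem on $\boX^1$ to the problem on $\tilde{\boX^1}$ already solved in Lemma \ref{tildeemin}. Let $v\in\boX^1$ be admissible, so that $[p](v)=\frac{\pi}{2}\ {\rm mod}\ \pi$, and assume $E(v)<\frac{2\sqrt{2}}{3}$ (otherwise there is nothing to prove). The zero set of $v$ is compact because $|v|\to 1$ at infinity, so by adding a small localized perturbation $w_\varepsilon\in H^1(\R)$ supported near the zeros I would build a non-vanishing map $v_\varepsilon=v+w_\varepsilon\in\tilde{\boX^1}$ with $E(v_\varepsilon)\le E(v)+o(1)$. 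By Lemma \ref{lem:andouille2} we get $[p](v_\varepsilon)=\frac{\pi}{2}+o(1)\ {\rm mod}\ \pi$, while Lemma \ref{limite2} gives $[p](v_\varepsilon)=p(v_\varepsilon)\ {\rm mod}\ \pi$. Arranging $p(v_\varepsilon)\in[0,\frac{\pi}{2})$ with $p(v_\varepsilon)\to\frac{\pi}{2}$ (using complex conjugation $v\mapsto\overline{v}$, which preserves $E$ and the constraint while flipping the sign of $p$), Lemma \ref{tildeemin} yields $E(v_\varepsilon)\ge E_{\min}(p(v_\varepsilon))$. Since $E_{\min}(\p)=E(\v_{c(\p)})=\frac{(2-c(\p)^2)^{3/2}}{3}\to\frac{2\sqrt{2}}{3}$ as $\p\to\frac{\pi}{2}^-$ (because $c(\p)\to 0^+$ by \eqref{monaco} and \eqref{piquet}), letting $\varepsilon\to 0$ gives $E(v)\ge\frac{2\sqrt{2}}{3}$. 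Combined with the previous paragraph this proves $\E_{\min}(\frac{\pi}{2})=\frac{2\sqrt{2}}{3}$.

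To identify the minimizers I would first argue that any minimizer $v$ must vanish somewhere: a non-vanishing admissible map has $p(v)=\frac{\pi}{2}+k\pi$, and the value $\frac{2\sqrt{2}}{3}$ is only the limit of the energies $E_{\min}(\p)$ realized by the $\v_{c(\p)}$ as $c(\p)\to 0^+$, i.e. as these non-vanishing profiles degenerate to $\v_0$; hence it is not attained inside $\tilde{\boX^1}$. Every minimizer therefore vanishes, and writing the Euler--Lagrange equation for $E$ under the constraint $[p]=\frac{\pi}{2}\ {\rm mod}\ \pi$ shows that $v$ solves \eqref{TWc} with Lagrange multiplier equal to the slope $\frac{{\rm d}\boE}{{\rm d}p}$ at $\frac{\pi}{2}$, which is $0$ by the computation $\frac{{\rm d}\boE}{{\rm d}p}(p(\v_c))=c$. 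Thus $v$ solves the stationary equation \eqref{ST}; being of finite energy and non-constant (a constant has $[p]=0$), it must equal $\exp i\theta\,\v_0(\cdot-a)$ by the classification recalled in the introduction, which is precisely uniqueness up to the invariances.

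The main obstacle is the desingularization in the second paragraph: producing $v_\varepsilon\in\tilde{\boX^1}$ with $E(v_\varepsilon)\le E(v)+o(1)$ and, above all, guaranteeing that $p(v_\varepsilon)$ lands in the admissible branch $[0,\frac{\pi}{2})$ rather than on a shifted branch $\frac{\pi}{2}+k\pi$ before invoking Lemma \ref{tildeemin}. This requires an a priori bound showing that a map of energy strictly below $\frac{2\sqrt{2}}{3}$ cannot carry the extra phase winding that a value $p\notin(-\frac{\pi}{2},\frac{\pi}{2})$ would force; equivalently, one must control how much the momentum can change when a single near-zero is removed. A secondary delicate point is the rigorous justification that the Lagrange multiplier in the uniqueness step is exactly $0$, for which the limiting-slope computation above must be turned into a genuine constraint-qualification argument, or replaced by a compactness argument forcing equality throughout the chain of inequalities of the lower bound.
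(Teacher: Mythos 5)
Your opening paragraph is correct: $[p](\v_0)=\frac{\pi}{2} \bmod \pi$ and $E(\v_0)=\frac{2\sqrt{2}}{3}$, so $\v_0$ is admissible and $\E_{\min}(\frac{\pi}{2})\le\frac{2\sqrt{2}}{3}$. The lower bound, however, contains the fatal gap, and it is exactly the one you flag yourself: you have no way to place the competitor's momentum in the window $[0,\frac{\pi}{2})$ where Lemma \ref{tildeemin} applies. Note first that your desingularization step is moot: by Lemma \ref{kinkmini} (which you never invoke), any map with $E(v)<\frac{2\sqrt{2}}{3}$ satisfies $\inf_{\R}|v|>0$, hence already lies in $\tilde{\boX^1}$; but then your $v_\varepsilon$ is just $v$, and $p(v)=\frac{\pi}{2}+k\pi$ \emph{exactly}, for some $k\in\Z$ --- never inside $[0,\frac{\pi}{2})$, and not approaching it either. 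So you need (a) a mechanism lowering the momentum strictly below $\frac{\pi}{2}$ at small energy cost --- this exists in the paper (the insertion construction of Lemma \ref{construct}, used in the proof of Lemma \ref{onlyweak}) and would settle the branch $k=0$ --- and, much more seriously, (b) an argument excluding the branches $k\neq 0$, i.e.\ showing that no zero-free map with $E(v)<\frac{2\sqrt{2}}{3}$ can have $|p(v)|\ge\frac{3\pi}{2}$. Point (b) cannot come from insertions (they change the momentum by $\q$ only at cost $14|\q|$, useless for crossing a gap of size $\pi$), nor from the soft bounds: Corollary \ref{coro} and the energy cost of a dip are perfectly consistent with a map whose energy is just below $\frac{2\sqrt{2}}{3}$, whose modulus dips to a tiny positive value, and whose momentum is $\frac{3\pi}{2}$. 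Excluding this is precisely what the paper's concentration-compactness argument does: in the proof of Theorem \ref{laye} (Claim \ref{grasdouble} and Steps 1--4), a sequence with $\inf_{\R}|u_n|\ge\delta_0>0$ is split into finitely many gray solitons $\v_{c_i}$ plus a remainder, and the strict concavity of $\p\mapsto E_{\min}(\p)$ forces each piece to carry energy strictly greater than $\frac{2}{\pi}\E_{\min}(\frac{\pi}{2})$ times its momentum; summing and using $|\frac{\pi}{2}+\tilde{k}\pi|\ge\frac{\pi}{2}$ gives the contradiction. The paper then obtains Lemma \ref{verrat} as an immediate corollary of Theorem \ref{laye} applied to minimizing sequences; your proposal leaves this, the actual substance of the lemma, as an unproven ``a priori bound''.

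The uniqueness half inherits the same gap in a second guise. Your claim that every minimizer must vanish does not follow from the remark that $\frac{2\sqrt{2}}{3}$ is only a limit of the values $E_{\min}(\p)$ for $\p<\frac{\pi}{2}$: that concerns the infimum over the branch $[0,\frac{\pi}{2})$ and says nothing about a zero-free map with $p=\frac{\pi}{2}+k\pi$ attaining the value $\frac{2\sqrt{2}}{3}$; ruling such maps out is again the splitting/strict-concavity argument, now at the level of equality. Your Lagrange-multiplier step is, as you admit, not justified (no differentiability of the constraint nor constraint qualification is established at the minimizer), but it is also dispensable: once a minimizer is known to vanish, Lemma \ref{kinkmini} together with the classification of finite-energy vanishing solutions of \eqref{TWc} (which exist only for $c=0$) already forces $v=\exp i\theta\,\v_0(\cdot-a)$. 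The paper avoids all of this by extracting uniqueness, too, from Theorem \ref{laye}: applying it to the constant sequence $u_n\equiv v$ yields $v(\cdot+a_n)\to\exp i\theta\,\v_0$ with strong $L^2$ convergence of $1-|v(\cdot+a_n)|^2$, which forces $(a_n)$ to be bounded and hence $v=\exp i\theta\,\v_0(\cdot-a)$. In short, the compactness theorem you are trying to bypass is not a convenience but the content of the result.
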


As a matter of fact, the central part of the variational argument entering in the proof of the orbital stability is a careful analysis of sequences $(u_n)_{n \in \N}$ in the space $\boX^1$ verifying
\begin{equation}
\label{truie}
\begin{split}
[p_n] & \equiv [p](u_n) \to \frac{\pi}{2},\\
& {\rm and}\\
E(u_n) & \to \ \E_{\min} \Big( \frac{\pi}{2} \Big), \ {\rm as} \ n \to + \infty.
\end{split}
\end{equation}
Minimizing sequences for $\E_{\min}(\frac{\pi}{2})$ are a special example of sequences satisfying \eqref{truie}. We have

\begin{theorem}
\label{laye}
Let $(u_n)_{n \in \N}$ be a sequence of maps in the space $\boX^1$ satisfying \eqref{truie}. There exist a subsequence $(u_{\sigma(n)})_{n \in \N}$, a sequence of points $(a_n)_{n \in \N}$, and a real number $\theta$ such that
$$u_{\sigma(n)} \big( \cdot + a_{\sigma(n)} \big) \to \exp i \theta \, \v_{0}(\cdot), \ {\rm as} \ n \to + \infty,$$
uniformly on any compact subset of $\R$. Moreover,
$$1-\vert u_{\sigma(n)} \big( \cdot + a_{\sigma(n)} \big) \vert^2 \to 1 - \vert \v_{0}(\cdot) \vert^2 \ {\rm in} \ L^2(\R), \ {\rm as} \ n \to + \infty,$$
and
$$u_{\sigma(n)}' \big( \cdot + a_{\sigma(n)} \big) \to \exp i \theta \, \v_{0}'(\cdot) \ {\rm in} \ L^2(\R), \ {\rm as} \ n \to + \infty.$$
\end{theorem}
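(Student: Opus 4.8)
The plan is to run a concentration--compactness argument in the energy space, the two decisive inputs being the strict concavity of $p \mapsto E_{\min}(p)$ (Lemma \ref{tildeemin}) and the identification of the minimizer in Lemma \ref{verrat}. First I would record that \eqref{truie} bounds $(u_n')$ and $(1-|u_n|^2)$ in $L^2(\R)$, and that the definition of $\boX^1$ together with a one-dimensional Sobolev embedding yields a uniform bound on $\|u_n\|_{L^\infty(\R)}$; in particular the maps $u_n$ are uniformly Lipschitz, hence locally equicontinuous. I would then use the concentration function of the energy density $e(u_n)$ to select translation parameters $a_n$, set $\tilde u_n \equiv u_n(\cdot + a_n)$, and extract by Arzel\`a--Ascoli a subsequence $(\tilde u_{\sigma(n)})$ converging to a limit $u_\infty$ uniformly on every compact subset of $\R$, with $\tilde u_{\sigma(n)}' \rightharpoonup u_\infty'$ and $1 - |\tilde u_{\sigma(n)}|^2 \rightharpoonup 1 - |u_\infty|^2$ weakly in $L^2(\R)$.

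The core of the argument is to exclude the vanishing and dichotomy scenarios so that no energy is lost in the limit. Vanishing, namely a uniform spreading of the energy density, would force the local oscillation and the local defect $1 - |u_n|^2$ to decay everywhere, and hence would prevent $[p](u_n)$ from approaching the nonzero class $\frac{\pi}{2} \bmod \pi$; since $\frac{\pi}{2} \not\equiv 0 \bmod \pi$, this contradicts \eqref{truie}, so $u_\infty$ is non-constant and carries positive energy. Dichotomy, namely a splitting of $(\tilde u_n)$ into two profiles escaping to $\pm \infty$ and carrying untwisted momenta $\p_1, \p_2 > 0$ with $\p_1 + \p_2 = \frac{\pi}{2}$, is ruled out by Lemma \ref{tildeemin}: a strictly concave function vanishing at the origin is strictly subadditive, so such a splitting would cost a total energy at least $E_{\min}(\p_1) + E_{\min}(\p_2)$, which exceeds the limiting value $\frac{2\sqrt{2}}{3} = \E_{\min}(\frac{\pi}{2})$, again contradicting \eqref{truie}.

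With both scenarios excluded, the sequence is tight and no energy escapes, so $u_\infty \in \boX^1$ and $E(u_\infty) = \lim_{n} E(\tilde u_{\sigma(n)}) = \E_{\min}(\frac{\pi}{2})$. The delicate point is to transfer the constraint to the limit, that is to prove $[p](u_\infty) = \frac{\pi}{2} \bmod \pi$. Writing $w_n \equiv \tilde u_{\sigma(n)} - u_\infty \in H^1(\R)$, Lemma \ref{lem:andouille2} gives $[p](\tilde u_{\sigma(n)}) = [p](u_\infty) + \frac{1}{2}\int_\R \langle i w_n, w_n' \rangle + \int_\R \langle i w_n, u_\infty' \rangle \bmod \pi$, and the tightness established above is exactly what lets me send the two correction integrals to zero. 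Together with $E(u_\infty) = \E_{\min}(\frac{\pi}{2})$, this shows $u_\infty$ is a minimizer for \eqref{porcelet}, whence $u_\infty = \exp(i\theta)\,\v_0(\cdot - b)$ for some $\theta, b \in \R$ by Lemma \ref{verrat}; absorbing $b$ into the $a_n$ yields the stated uniform convergence on compact sets.

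Finally I would upgrade to the strong $L^2$ statements. Since the two nonnegative pieces of $E$ are each weakly lower semicontinuous while their sum converges to the sum of the limiting pieces, each piece must converge separately, so $\|\tilde u_{\sigma(n)}'\|_{L^2} \to \|\v_0'\|_{L^2}$ and $\|1 - |\tilde u_{\sigma(n)}|^2\|_{L^2} \to \|1 - |\v_0|^2\|_{L^2}$. Combined with the weak convergences already obtained, the standard fact that weak convergence together with convergence of norms implies strong convergence in $L^2(\R)$ delivers the two limits in the statement. I expect the main obstacle to be the passage to the limit in the untwisted momentum $[p]$: because it is defined through an improper integral carrying a renormalizing phase term at infinity (Lemma \ref{limite2}), it is not continuous under mere weak convergence, and controlling it is precisely what couples the exclusion of dichotomy with the tightness of the sequence near the concentration point.
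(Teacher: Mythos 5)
Your overall strategy (concentration--compactness plus strict concavity of $E_{\min}$) is indeed the engine of the paper's proof, but two of your pivotal steps do not work as stated. First, the identification of the limit is circular: you invoke Lemma \ref{verrat} to conclude that $u_\infty = \exp(i\theta)\,\v_0(\cdot - b)$, but in the paper Lemma \ref{verrat} is itself deduced \emph{from} Theorem \ref{laye} (by applying it to minimizing sequences); it has no independent proof. To identify the limit without circularity you must show that it is a constrained critical point, i.e.\ that it solves \eqref{TWc} for some Lagrange multiplier $c$. This is the paper's Lemma \ref{onlyweak}, and its proof is not soft: since a compactly supported perturbation $u_n + t\xi$ destroys the constraint $[p] = \frac{\pi}{2}$, the paper repairs the momentum by gluing in, far away, the small loop constructed in Lemma \ref{construct}, at energy cost $O(t^2) + o(1)$, so as to produce an admissible competitor. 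This Euler--Lagrange step is entirely absent from your proposal, and it is also what lets the paper identify the escaping profiles in the dichotomy scenario as gray solitons $\v_{c_i}$ with well-defined momenta $\p_i = p(\v_{c_i}) \in (-\frac{\pi}{2},\frac{\pi}{2})$. Without it, your two-bubble dichotomy exclusion is under-specified: the pieces may be more than two, may contain zeros (where the renormalized momentum $p$ is undefined), and their momenta are a priori only defined modulo $\pi$ — the paper's Steps \ref{stepdeux}--\ref{stepquatre} do exactly this bookkeeping, with the chord inequality applied to $|\p_i|$ and a separate bound $\tilde{E} \geq \sqrt{2}(1 - \frac{\delta_{\pi/2}}{4})|\tilde{\p}|$ for the residual momentum at infinity coming from Lemma \ref{colisee}.

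Second, your passage of the constraint to the limit fails: you set $w_n = \tilde{u}_{\sigma(n)} - u_\infty$ and apply Lemma \ref{lem:andouille2}, but that lemma requires $w_n \in H^1(\R)$, hence $w_n \in L^2(\R)$, and the difference of two maps of $\boX^1$ need not be square-integrable. Even with uniformly small tail energy, $\tilde{u}_{\sigma(n)}$ can drift in phase relative to $u_\infty$ (a winding by a fixed angle spread over a length $L$ costs energy of order $1/L$), so that $|w_n|$ remains of order $1$ at infinity; this is precisely the pathology, noted after Corollary \ref{longueur}, that forces the paper to define $[p]$ with the phase renormalization at infinity and to control tails pointwise via Lemma \ref{colisee} (where $|u_n|$ is bounded below), rather than through $H^1$ perturbation formulas. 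A repaired version of your step needs uniform absence of zeros of $\tilde{u}_{\sigma(n)}$ outside a fixed compact set together with tail-energy smallness, and then the representation of $[p]$ as a compact part plus $\frac{1}{2}\int (\varrho_n^2-1)\varphi_n'$ over the tails. Two further points: $u_n' \in L^2$ gives $\frac{1}{2}$-H\"older equicontinuity, not Lipschitz; and centering at energy-concentration points does not force the limit to vanish anywhere — the limit could be a gray soliton $\v_c$, $c \neq 0$. The paper instead centers at near-zeros of $u_n$, and proving that such near-zeros exist (Claim \ref{grasdouble}, established by the contradiction argument sketched above) is the real content of the compactness proof; your proposal has no substitute for it.
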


Lemma \ref{verrat} is an immediate consequence of Theorem \ref{laye}, taking minimizing sequences for $\E_{\min}(\frac{\pi}{2})$. In the study of the orbital stability of $\v_0$, another important consequence of Theorem \ref{laye} is

\begin{cor}
\label{porcide}
Given any numbers $A > 0$ and $\varepsilon > 0$, there exists a number $\delta > 0$ such that, if the function $v$ belongs to the space $\boX^1$ and satisfies
$$\Big| [p](v) - \frac{\pi}{2} \Big| \leq \delta, \ {\rm and} \ |E(v) - E(\v_0)| \leq \delta,$$
then there exist some numbers $a \in \R$ and $\theta \in \R$ such that
$$d_{A, \boX^1} \big( v(\cdot + a), \exp i \theta \ \v_0(\cdot) \big) \leq \varepsilon.$$
\end{cor}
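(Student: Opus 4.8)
The plan is to argue by contradiction, reducing the statement to the compactness result of Theorem \ref{laye}. First observe that, by Lemma \ref{verrat}, the infimum $\E_{\min}(\frac{\pi}{2})$ is achieved by $\v_0$, so that $\E_{\min}(\frac{\pi}{2}) = E(\v_0)$. Suppose now that the corollary were false for some fixed $A > 0$ and $\varepsilon > 0$. Then, for each integer $n \geq 1$, there would exist a map $v_n \in \boX^1$ with $|[p](v_n) - \frac{\pi}{2}| \leq \frac{1}{n}$ and $|E(v_n) - E(\v_0)| \leq \frac{1}{n}$, yet such that
$$\inf_{(a, \theta) \in \R^2} d_{A, \boX^1} \big( v_n(\cdot + a), \exp i \theta \, \v_0(\cdot) \big) > \varepsilon.$$
The two smallness conditions say exactly that $[p](v_n) \to \frac{\pi}{2}$ and $E(v_n) \to E(\v_0) = \E_{\min}(\frac{\pi}{2})$, so the sequence $(v_n)_{n \in \N}$ satisfies the hypotheses \eqref{truie} of Theorem \ref{laye}.

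Applying Theorem \ref{laye}, I would extract a subsequence $(v_{\sigma(n)})_{n \in \N}$, translations $(a_n)_{n \in \N}$ and a phase $\theta \in \R$ for which, writing $w_n \equiv v_{\sigma(n)}(\cdot + a_{\sigma(n)})$ and $V \equiv \exp i \theta \, \v_0$, one has $w_n \to V$ uniformly on every compact subset of $\R$, $w_n' \to V'$ in $L^2(\R)$, and $1 - |w_n|^2 \to 1 - |\v_0|^2$ in $L^2(\R)$. It then remains to translate these three convergences into $d_{A, \boX^1}(w_n, V) \to 0$, which will contradict the lower bound above and close the argument. The first term of $d_{A, \boX^1}$, namely $\|w_n - V\|_{L^\infty([-A, A])}$, tends to $0$ by the uniform convergence on the compact set $[-A, A]$; the second term $\|w_n' - V'\|_{L^2(\R)}$ tends to $0$ by the second convergence, since $V' = \exp i \theta \, \v_0'$.

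The only delicate point, and the main obstacle, is the third term $\||w_n| - |V|\|_{L^2(\R)} = \||w_n| - |\v_0|\|_{L^2(\R)}$ (note that $|V| = |\v_0|$): Theorem \ref{laye} controls $|w_n|^2$, not $|w_n|$, and passing from the former to the latter is problematic precisely at the zero of $\v_0$, where the modulus degenerates. To handle this I would split $\R$ at a large radius $A'$, fixed once and for all so that $|\v_0(x)| \geq \frac{1}{2}$ whenever $|x| > A'$, which is possible since $\v_0$ converges exponentially to $\pm 1$. On $[-A', A']$, the uniform convergence $w_n \to V$ gives $|w_n| \to |\v_0|$ uniformly, hence $\int_{-A'}^{A'} \big||w_n| - |\v_0|\big|^2 \to 0$. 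On $\{|x| > A'\}$, one has $|w_n| + |\v_0| \geq \frac{1}{2}$, so that
$$\big| |w_n| - |\v_0| \big| = \frac{\big| |w_n|^2 - |\v_0|^2 \big|}{|w_n| + |\v_0|} \leq 2 \big| |w_n|^2 - |\v_0|^2 \big|,$$
whence
$$\int_{|x| > A'} \big| |w_n| - |\v_0| \big|^2 \leq 4 \big\| (1 - |w_n|^2) - (1 - |\v_0|^2) \big\|_{L^2(\R)}^2 \longrightarrow 0$$
by the third convergence of Theorem \ref{laye}. Combining the two regions yields $\||w_n| - |\v_0|\|_{L^2(\R)} \to 0$, and therefore $d_{A, \boX^1}(w_n, V) \to 0$. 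Choosing $n$ large enough that this distance is at most $\varepsilon$ contradicts the strict lower bound displayed above, which proves the corollary.
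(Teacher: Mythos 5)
Your proposal is correct and follows exactly the route the paper intends: Corollary \ref{porcide} is stated there as a direct consequence of Theorem \ref{laye} (via Lemma \ref{verrat} to identify $E(\v_0) = \E_{\min}(\frac{\pi}{2})$), proved by the same compactness-and-contradiction argument you give. Your careful splitting of $\| |w_n| - |\v_0| \|_{L^2(\R)}$ into the compact region and the region where $|\v_0| \geq \frac{1}{2}$ is precisely the detail the paper leaves implicit, and it is handled correctly.
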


We next turn to the dynamics of \eqref{GP}. The second part in the proof of Theorem \ref{stationnaire} is to establish that besides the energy, which is known to be conserved in view of Theorem \ref{cauchy}, the untwisted momentum $[p]$ is preserved by \eqref{GP}.

\begin{prop}
\label{grouic}
Assume $v_0 \in \boX^1$, and let $v$ be the solution to \eqref{GP} with initial datum $v_0$. Then,
$$[p](v(\cdot, t)) = [p](v_0), \ \forall t \in \R.$$
If moreover $v_0 \in \boZ^1$, then $v(t)$ belongs to $\boZ^1$ for any $t \in \R$, and
$$\boP(v(\cdot, t)) = \boP(v_0), \ \forall t \in \R.$$
\end{prop}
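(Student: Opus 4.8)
The heart of the matter is the local conservation law for the momentum density $\langle iv, v'\rangle$. The plan is to reduce the improper quantities $[p]$ and $\boP$ to genuine definite integrals by means of Lemmas \ref{andouille} and \ref{lem:andouille2}, to differentiate these in time, and to recognize the result as the integral of an exact $x$-derivative whose flux is controlled by the conserved $L^2$ norms. Write $v(\cdot, t) = v_0 + w(t)$, where by Theorem \ref{cauchy} we have $w \in \boC^0(\R, H^1(\R))$ with $w(0) = 0$. Lemma \ref{lem:andouille2} then gives, for every $t$,
$$[p](v(\cdot,t)) = [p](v_0) + G(t) \ {\rm mod}\ \pi, \qquad G(t) \equiv \frac{1}{2}\int_\R \langle iw, w' \rangle + \int_\R \langle iw, v_0' \rangle,$$
with $w = w(t)$. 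The advantage noted after Lemma \ref{andouille} is decisive here: $G(t)$ is an honest real number built from definite integrals of $H^1$ functions paired with $L^2$ functions, so $G$ is continuous on $\R$ with $G(0) = 0$, and it suffices to prove $G \equiv 0$. For the second assertion, if $v_0 \in \boZ^1$ then $w(t) \in H^1(\R) \hookrightarrow \boC^0(\R)$ vanishes at $\pm \infty$, so $v(\cdot,t) \in \boZ^1$, and Lemma \ref{andouille} yields the \emph{exact} identity $\boP(v(\cdot,t)) = \boP(v_0) + G(t)$; the same conclusion $G \equiv 0$ finishes the proof.

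Assume first that $v_0$ enjoys the extra regularity $v_0' \in H^1(\R)$ and that the corresponding $w(t)$ lies in $H^2(\R)$ with $w \in \boC^1(\R, L^2(\R))$ (persistence of regularity for the Cauchy problem). Differentiating $G$ and integrating by parts, using $\langle ia, b \rangle = - \langle ib, a \rangle$ together with the decay of $H^1$ functions, one obtains
$$G'(t) = \int_\R \langle i \partial_t w, w' + v_0' \rangle = \int_\R \langle i \partial_t v, v' \rangle.$$
Substituting $i \partial_t v = - v'' + v(|v|^2 - 1)$ from \eqref{GP}, and using $\langle v'', v' \rangle = \frac{1}{2}\partial_x |v'|^2$ and $(|v|^2 - 1)\langle v, v' \rangle = \frac{1}{4}\partial_x (|v|^2 - 1)^2$, the integrand becomes an exact derivative,
$$\langle i \partial_t v, v' \rangle = \partial_x \Big( \frac{1}{4}(1 - |v|^2)^2 - \frac{1}{2}|v'|^2 \Big).$$
Since $v' \in L^2(\R)$ and $1 - |v|^2 \in L^2(\R)$, the flux $F = \frac{1}{4}(1 - |v|^2)^2 - \frac{1}{2}|v'|^2$ belongs to $L^1(\R)$, so there is a sequence $R_n \to + \infty$ along which $F(\pm R_n) \to 0$. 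Integrating the identity above over $[-R_n, R_n]$ and letting $n \to + \infty$ gives $G'(t) = \int_\R \langle i \partial_t v, v' \rangle = 0$. With $G(0) = 0$ this proves $G \equiv 0$ in the regular case.

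For general $v_0 \in \boX^1$ (respectively $v_0 \in \boZ^1$), I would mollify to obtain data $v_0^n$ with $(v_0^n)' \in H^1(\R)$ (respectively with $v_0^n \in \boZ^1$ as well) and $v_0^n - v_0 \to 0$ in $H^1(\R)$. Solving \eqref{GP} with datum $v_0^n$ and applying the regular case yields $[p](v^n(\cdot,t)) = [p](v_0^n)$ (respectively $\boP(v^n(\cdot,t)) = \boP(v_0^n)$) for all $t$. By continuous dependence of the flow in the $H^1$ norm of the perturbation, $v^n(\cdot,t) - v_0^n \to v(\cdot,t) - v_0$ in $H^1(\R)$, and Lemmas \ref{andouille} and \ref{lem:andouille2} guarantee that $[p]$ and $\boP$ are continuous along such $H^1$ convergences. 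Passing to the limit transfers the identity to $v_0$.

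The main obstacle is not the algebra, which is exact, with boundary terms tamed by the conserved $L^2$ quantities, but the low regularity of a generic energy-space solution: both the term-by-term differentiation of $G$ and the pointwise identity $\langle i \partial_t v, v' \rangle = \partial_x F$ require $v \in H^2_{\rm loc}$ and a time derivative that $v$ need not a priori possess. The effort therefore concentrates in the approximation step, namely securing the persistence of the extra regularity $v_0' \in H^1(\R) \Rightarrow w(t) \in H^2(\R)$ for the smooth data, establishing the continuous dependence needed to pass to the limit, and checking that mollification produces admissible approximations converging in $H^1(\R)$.
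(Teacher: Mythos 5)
Your proposal is correct and follows essentially the same route as the paper: reduce $[p]$ and $\boP$ to definite integrals via Lemmas \ref{andouille} and \ref{lem:andouille2}, differentiate in time for $H^2$-regular solutions (the paper's space $\boX^2$, with persistence of regularity cited from the Cauchy theory), observe that $\langle i\partial_t v, v_x\rangle$ is an exact $x$-derivative whose flux vanishes, and then treat general data in $\boX^1$ (resp. $\boZ^1$) by mollification, continuous dependence of the flow in $H^1$, and continuity of $[p]$ and $\boP$ under $H^1$ perturbations. The only cosmetic difference is your extraction of a sequence $R_n \to +\infty$ along which the $L^1$ flux vanishes, where the paper simply integrates the exact derivative to zero.
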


The proof of Theorem \ref{stationnaire} then follows combining Theorem \ref{laye} with the conservation of energy and untwisted momentum, and the continuity of the latter with respect to $d_{A,\boX^1}$.

Finally, for the proof of Theorem \ref{vitesse}, we invoke the conservation law for the relative center of mass. At least formally, we have the identity
\begin{equation}
\label{interP}
\frac{d}{dt} \bigg( \frac{1}{2} \int_{\R} x \Big( |\Psi(x,t)|^2 - 1 \Big) dx \bigg) = 2 P(\Psi(t)),
\end{equation}
for solutions to \eqref{GP}. The rigorous argument in Section \ref{sect:4} involves a localized version of \eqref{interP}.

The rest of the paper is organized as follows. In the next section, we provide proofs of various results stated in the introduction, and to several properties of maps having a bounded Ginzburg-Landau energy. We prove Theorem \ref{laye} in Section \ref{sect:3}, while Section \ref{sect:4} is devoted to the rigorous proofs of the conservation of the untwisted momentum and center of mass. Finally, we prove Theorem \ref{stationnaire} and Theorem \ref{vitesse} in Section \ref{sect:5}.

\begin{merci}
F.B., P.G. and D.S. acknowledge partial support from project JC05-51279 of the Agence Nationale de la Recherche. J.-C. S. acknowledges support from project ANR-07-BLAN-0250 of the Agence Nationale de la Recherche.
\end{merci}

%%%%%%%%%%%%%%%%%%%%%%%%%%%%%%%%%%%%%%%%%%%
%%%%%%%%%%%%%%%%%%%%%%%%%%%%%%%%%%%%%%%%%%%
\section{Properties of energy and momentum}
\label{sect:2}
%%%%%%%%%%%%%%%%%%%%%%%%%%%%%%%%%%%%%%%%%%%
%%%%%%%%%%%%%%%%%%%%%%%%%%%%%%%%%%%%%%%%%%%

The purpose of this section is to provide several properties of maps with bounded energy, in particular in connection with their momentum and possible limits at infinity. We also provide the proofs of various results stated in the introduction.

%%%%%%%%%%%%%%%%%%%%%%%%%%%%%%%%%%%%%%%%%%%%%%%%%%%%
\subsection{Maps with finite Ginzburg-Landau energy}
%%%%%%%%%%%%%%%%%%%%%%%%%%%%%%%%%%%%%%%%%%%%%%%%%%%%

We first have

\begin{lemma}
\label{changi}
Let $E > 0$ and $0 < \delta_0 < 1$ be given. There exists an integer $\ell_0 = \ell_0(E, \delta_0)$, depending only on $E$ and $\delta_0$, such that the following property holds: given any map $v \in \boX^1$ satisfying $E(v) \leq E$, either
$$\big| 1 - |v(x)| \big| < \delta_0, \ \forall x \in \R,$$
or there exists $\ell$ points $x_1$, $x_2$, $\ldots$, and $x_\ell$ satisfying $\ell \leq \ell_0$,
$$\big| 1 - |v(x_i)| \big| \geq \delta_0, \ \forall 1 \leq i \leq \ell,$$
and
$$\big| 1 - |v(x)| \big| \leq \delta_0, \ \forall x \in \R \setminus \underset{i = 1}{\overset{\ell}{\cup}} \big[ x_i - 1, x_i + 1 \big].$$
\end{lemma}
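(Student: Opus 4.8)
The plan is to exploit the energy bound to control how often the modulus $|v|$ can deviate significantly from $1$, and to show that such deviations must occur on a bounded number of separated unit intervals. The key quantitative input is that whenever $\big|1-|v(x)|\big|$ exceeds a fixed threshold at some point, a fixed amount of energy must be concentrated nearby: this will bound the number of ``bad'' regions by $E$ times a constant.

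First I would set $\rho = |v|$ and recall that, for $v \in \boX^1$, the function $\rho$ is continuous and $\rho' \in L^2(\R)$, while $1-\rho^2 \in L^2(\R)$; more precisely, pointwise one has $e(v) \geq \frac{1}{2}|\rho'|^2 + \frac{1}{4}(1-\rho^2)^2$, since the kinetic term $\frac12|v'|^2$ dominates $\frac12 \rho'^2$. If $\big|1-\rho(x)\big| < \delta_0$ for every $x$, we are in the first alternative and there is nothing to prove. Otherwise there exists at least one point where $\big|1-\rho\big| \geq \delta_0$. The core estimate is a \emph{local energy lower bound}: if $\big|1-\rho(y)\big| \geq \delta_0$ at some point $y$, then there is a constant $\eta = \eta(\delta_0) > 0$ such that
$$\int_{y-1}^{y+1} e(v) \geq \eta.$$
I would prove this by a dichotomy. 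Either $\rho$ stays at least $\frac{\delta_0}{2}$ away from $1$ throughout $[y-1,y+1]$, in which case the potential term $\frac14\int_{y-1}^{y+1}(1-\rho^2)^2$ is bounded below by a positive constant depending only on $\delta_0$; or $\rho$ returns to within $\frac{\delta_0}{2}$ of $1$ somewhere in the interval, and then the variation of $\rho$ over a subinterval of length at most $2$ is at least $\frac{\delta_0}{2}$, so by Cauchy--Schwarz $\int_{y-1}^{y+1}|\rho'|^2 \geq \frac{1}{2}\big(\frac{\delta_0}{2}\big)^2$, giving a kinetic lower bound. In either case we obtain the claimed $\eta(\delta_0)$.

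Granting this local bound, I would construct the points $x_i$ by a greedy selection. Let $B = \{x : \big|1-\rho(x)\big| \geq \delta_0\}$, which is nonempty in the second alternative. Pick $x_1 \in B$, remove $[x_1-1,x_1+1]$ from consideration, pick $x_2 \in B \setminus [x_1-1,x_1+1]$ if it is nonempty, and continue, always choosing a new point outside the union of the previously chosen unit-radius intervals. By construction the chosen points are pairwise more than distance $2$ apart along the selection, but to make the local energy contributions genuinely disjoint I would instead enforce separation at least $2$ between \emph{consecutive accepted} centers so that the intervals $[x_i-1,x_i+1]$ have pairwise disjoint interiors (equivalently select a maximal $2$-separated subset of $B$). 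Then the local bounds add up, $\ell \, \eta \leq \sum_{i=1}^\ell \int_{x_i-1}^{x_i+1} e(v) \leq E(v) \leq E$, forcing $\ell \leq \ell_0 := \lfloor E/\eta \rfloor$. By maximality of the selected $2$-separated family, every point of $B$ lies within distance $1$ of some chosen $x_i$, which yields exactly the covering statement $\big|1-|v(x)|\big| \leq \delta_0$ on $\R \setminus \bigcup_{i=1}^\ell [x_i-1,x_i+1]$.

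The step I expect to require the most care is the local energy lower bound, specifically the bookkeeping that guarantees the selected intervals have disjoint interiors while still covering all of $B$; the cleanest route is to take a maximal $2$-separated subset of $B$ and verify both the packing inequality (for the upper bound on $\ell$) and the covering property (for the final inclusion) from maximality. The analytic estimate itself is routine once the dichotomy between ``potential-dominated'' and ``gradient-dominated'' behavior is set up.
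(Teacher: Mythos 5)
Your overall strategy --- a fixed energy quantum near every point where $\big|1-|v|\big| \geq \delta_0$, followed by a selection of separated such points and a packing bound --- is the same as the paper's, and your dichotomy proof of the local lower bound is correct; it is in fact slightly cleaner than the paper's, which derives the local bound from the H\"older estimate $|v(x)-v(y)| \leq \sqrt{2}\,E^{1/2}|x-y|^{1/2}$ and therefore produces a constant depending on $E$ as well as on $\delta_0$, whereas your $\eta$ depends on $\delta_0$ alone.

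The gap is in the final bookkeeping. Maximality of a $2$-separated subset $\{x_i\}$ of $B$ does \emph{not} imply that every point of $B$ lies within distance $1$ of some $x_i$; it only gives distance strictly less than $2$. Concretely, if $B = \{0, \tfrac{3}{2}\}$, then $\{0\}$ is a maximal $2$-separated subset of $B$ (the point $\tfrac32$ cannot be added since $\tfrac32 < 2$), yet $\tfrac32 \notin [-1,1]$, so the required inclusion $B \subset \bigcup_i [x_i-1,x_i+1]$ fails for this selection: you get the packing inequality but lose the covering. Note also that your first, greedy selection produces centers that are pairwise at distance $>1$, not $>2$ as you state. The fix is easy, and your greedy selection is actually the right one: choosing successively $x_{k+1} \in B \setminus \bigcup_{i \leq k} [x_i - 1, x_i + 1]$ gives centers pairwise more than $1$ apart, so the intervals $[x_i-1,x_i+1]$ need not be disjoint, but any point of $\R$ belongs to at most two of them (three centers lying in an interval of length $2$ cannot be pairwise more than $1$ apart); hence $\ell\,\eta \leq \sum_{i} \int_{x_i-1}^{x_i+1} e(v) \leq 2E$, the process terminates after at most $2E/\eta$ steps, and at termination the covering holds by construction. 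Alternatively, prove your local bound on $[y-\tfrac12, y+\tfrac12]$ and use a maximal $1$-separated family, or do as the paper does: tile $\R$ by the cells $I_i = [i-\tfrac12, i+\tfrac12]$, $i \in \Z$, pick one point of $B$ in each cell meeting $B$ (so covering is automatic, since cells have length $1$), and use that the doubled cells $[i-1,i+1]$ cover $\R$ with multiplicity exactly $2$.
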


\begin{proof}
Set
$$\boA = \big\{ z \in \R, \ {\rm s.t.} \ \big| 1 - |v(z)| \big| \geq \delta_0 \big\},$$
and assume that $\boA$ is not empty. Considering the covering $\R = \underset{i \in \N}{\cup} I_i$, where $I_i = [i - \frac{1}{2}, i + \frac{1}{2}]$, we claim that, if $I_i \cap \boA \neq \emptyset$, then
\begin{equation}
\label{claim0}
\int_{\tilde{I_i}} e(v) \geq \mu_0,
\end{equation}
where $\tilde{I_i} = [i - 1, i + 1]$, and $\mu_0$ is some positive constant. To prove the claim, we first notice that
$$|v(x) - v(y)| \leq \| v' \|_{L^2(\R)} |x - y|^\frac{1}{2} \leq \sqrt{2} E^\frac{1}{2} |x - y|^\frac{1}{2},$$
for any $(x, y) \in \R^2$. Therefore, if $z \in \boA$, then
$$\big| 1 - |v(y)| \big| \geq \frac{\delta_0}{2}, \ \forall y \in [z - r, z + r],$$
where $r = \frac{\delta_0^2}{8 E} $. Choosing $r_0 = \min \{ r, \frac{1}{2} \}$, we are led to
$$\int_{z - r_0}^{z + r_0} e(v) \geq\frac{1}{4} \int_{z - r_0}^{z + r_0} (1 - |v|)^2 \geq \mu_0 \equiv \frac{r_0 \delta_0^2}{8}.$$
In particular, if $z \in I_i \cap \boA$ for some $i \in \N$, then $[z - r_0, z + r_0] \subset \tilde{I_i}$, and claim \eqref{claim0} follows. To conclude the proof, we notice that
$$\underset{i \in \N}{\sum} \int_{\tilde{I_i}} e(v) = 2 E(v) \leq 2 E,$$
so that, in view of \eqref{claim0},
$$\ell \mu_0 \leq 2 E,$$
where $\ell = \Card \{ i \in \N, \ {\rm s.t.} \ I_i \cap \boA \neq \emptyset \}$. Setting $\ell_0 = \frac{2 E}{\mu_0}$, and choosing some point $x_i \in I_i \cap \boA$, for any $i \in \N$ such that $I_i \cap \boA \neq \emptyset$, the conclusion follows (after a possible relabelling of the points $x_i$).
\end{proof}

An immediate consequence of Lemma \ref{changi} is

\begin{cor}
\label{longueur} 
Given any map $v \in \boX^1$, and any number $0 < \delta < 1$, there exists some number $L(\delta, v) \geq 0$ such that
$$|v(x)| \geq \delta,$$
for any $|x| \geq L(\delta, v)$. In particular,
\begin{equation}
\label{salami}
|v(x)| \to 1, \ {\rm as} \ |x| \to + \infty.
\end{equation}
\end{cor}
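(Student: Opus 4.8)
The plan is to apply Lemma \ref{changi} directly, with a threshold $\delta_0$ tailored to the prescribed $\delta$. Set $E = E(v)$, which is finite since $v \in \boX^1$, and invoke Lemma \ref{changi} with $\delta_0 = 1 - \delta \in (0, 1)$. The dichotomy in the lemma splits the argument into two cases. In the first case, $\big| 1 - |v(x)| \big| < \delta_0$ holds for every $x \in \R$, so that $|v(x)| > 1 - \delta_0 = \delta$ everywhere, and the conclusion holds trivially with $L(\delta, v) = 0$.

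In the second case, the lemma produces finitely many points $x_1, \ldots, x_\ell$, with $\ell \leq \ell_0$, such that $\big| 1 - |v(x)| \big| \leq \delta_0$ for every $x$ outside the bounded set $K = \cup_{i=1}^\ell [x_i - 1, x_i + 1]$. I would then set $L(\delta, v) = \max_{1 \leq i \leq \ell} \big( |x_i| + 1 \big)$, so that any $x$ with $|x| > L(\delta, v)$ lies outside $K$, and hence satisfies $\big| 1 - |v(x)| \big| \leq \delta_0$, that is $|v(x)| \geq 1 - \delta_0 = \delta$. This yields the first assertion (after possibly enlarging $L(\delta, v)$ by an arbitrarily small amount if one insists on the non-strict inequality $|x| \geq L(\delta, v)$).

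For the limit \eqref{salami}, I would run the same argument while keeping the two-sided bound in mind: since $\big| |v(x)| - 1 \big| = \big| 1 - |v(x)| \big|$, the estimate above shows that for every $\delta_0 \in (0, 1)$ there is a bounded exceptional set outside of which $\big| |v(x)| - 1 \big| \leq \delta_0$. Consequently $\limsup_{|x| \to +\infty} \big| |v(x)| - 1 \big| \leq \delta_0$ for each $\delta_0 \in (0, 1)$, and letting $\delta_0 \to 0$ gives $|v(x)| \to 1$ as $|x| \to +\infty$. As the statement is advertised as an immediate consequence, there is no substantive obstacle here: all the analytic content is already packaged in Lemma \ref{changi}, and the only point demanding (minimal) attention is the bookkeeping that turns the finite union of unit intervals into a single radius $L(\delta, v)$, together with the observation that the one-sided bound $|v(x)| \geq \delta$ needed for the main claim and the two-sided bound needed to identify the limit $1$ both fall out of the same inequality $\big| 1 - |v(x)| \big| \leq \delta_0$.
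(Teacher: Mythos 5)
Your proof is correct and follows exactly the route the paper intends: the paper presents Corollary \ref{longueur} as an immediate consequence of Lemma \ref{changi}, and your argument (applying the lemma with $\delta_0 = 1 - \delta$, handling the dichotomy, and using the two-sided bound $\big|1 - |v(x)|\big| \leq \delta_0$ with $\delta_0 \to 0$ for the limit \eqref{salami}) is precisely that deduction, including the correct minor bookkeeping at the interval endpoints.
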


\begin{remark}
We emphasize once more that in contrast with \eqref{salami}, the map $v$ itself need not have a limit as $|x| \to + \infty$. It suffices to choose $v = \exp i \varphi$, with $\varphi'$ belonging to $L^2(\R)$, but not to $L^1(\R)$, for instance $\varphi(x) = \ln( x^2 + 1)$.
\end{remark}

We finish this section with some elementary observations. The first one emphasizes the role of the sonic speed $\sqrt{2}$. 

\begin{lemma}
\label{colisee}
Let $\varrho$ and $\varphi$ be real-valued, smooth functions on some interval of $\R$, such that $\varrho$ is positive. Set $v = \varrho \exp i \varphi$. Then, we have the pointwise bound
$$\Big| (\varrho^2 - 1) \varphi' \Big| \leq \frac{\sqrt{2}}{\varrho} e(v).$$
\end{lemma}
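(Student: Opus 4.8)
The plan is to express the energy density $e(v)$ explicitly in terms of the modulus $\varrho$ and the phase $\varphi$, then to bound the quantity $(\varrho^2 - 1) \varphi'$ by a Young-type inequality whose constants are tuned precisely to produce the factor $\sqrt{2}$.

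First I would differentiate $v = \varrho \exp i \varphi$, obtaining $v' = (\varrho' + i \varrho \varphi') \exp i \varphi$, so that $|v'|^2 = \varrho'^2 + \varrho^2 (\varphi')^2$ while $|v|^2 = \varrho^2$. Substituting into the definition of the energy density yields
$$e(v) = \frac{1}{2} \varrho'^2 + \frac{1}{2} \varrho^2 (\varphi')^2 + \frac{1}{4} (1 - \varrho^2)^2.$$
Since the term $\frac{1}{2} \varrho'^2$ is nonnegative, I would simply discard it and keep the lower bound
$$e(v) \geq \frac{1}{2} \varrho^2 (\varphi')^2 + \frac{1}{4} (1 - \varrho^2)^2.$$

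It then suffices to establish the pointwise inequality
$$\frac{\varrho}{\sqrt{2}} \big| (\varrho^2 - 1) \varphi' \big| \leq \frac{1}{2} \varrho^2 (\varphi')^2 + \frac{1}{4} (1 - \varrho^2)^2.$$
Writing $a = \varrho |\varphi'|$ and $b = |1 - \varrho^2|$, the left-hand side equals $\frac{1}{\sqrt{2}} a b$ and the right-hand side equals $\frac{1}{2} a^2 + \frac{1}{4} b^2$, so the claim reduces to the elementary inequality $\frac{1}{\sqrt{2}} a b \leq \frac{1}{2} a^2 + \frac{1}{4} b^2$. This is immediate from the arithmetic-geometric mean inequality, since $\frac{1}{2} a^2 + \frac{1}{4} b^2 \geq 2 \sqrt{\frac{1}{8} a^2 b^2} = \frac{1}{\sqrt{2}} a b$, which gives the desired bound after multiplying back by $\varrho / \sqrt{2}$.

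I do not expect any genuine obstacle here: the content is a one-line Young inequality, and the only point requiring attention is the exact matching of the coefficients $\frac{1}{2}$ and $\frac{1}{4}$, which is precisely what forces the sonic speed $\sqrt{2}$ to appear in the constant. Equality in the final step occurs when $\frac{1}{2} a^2 = \frac{1}{4} b^2$, that is when $\varrho^2 (\varphi')^2 = \frac{1}{2} (1 - \varrho^2)^2$, reflecting the special role of $\sqrt{2}$ emphasized in the statement.
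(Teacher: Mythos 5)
Your proof is correct and follows essentially the same route as the paper: both write $e(v) = \frac{1}{2}(\varrho'^2 + \varrho^2 (\varphi')^2) + \frac{1}{4}(1-\varrho^2)^2$, drop the $\varrho'^2$ term, and apply the Young inequality $|ab| \leq \frac{1}{2}(a^2+b^2)$ with the constants tuned exactly as you describe (the paper takes $a = \frac{1}{\sqrt{2}}(1-\varrho^2)$, $b = \varrho\varphi'$, which is your AM--GM step in disguise). The only nitpick is the final phrase ``multiplying back by $\varrho/\sqrt{2}$'': you of course mean dividing by it.
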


\begin{proof}
The energy density of $v$ can be expressed as
$$e(v) = \frac{1}{2} \Big( (\varrho')^2 + \varrho^2 (\varphi')^2 \Big) + \frac{1}{4} \Big( 1 - \varrho^2 \Big)^2.$$
The conclusion follows from the inequality $|a b| \leq \frac{1}{2} (a^2 + b^2)$ applied to $a = \frac{1}{\sqrt{2}} (1 - \varrho^2)$ and $b = \varrho \varphi'$.
\end{proof}

As a consequence, we have

\begin{cor}
\label{coro}
Assume $v \in \tilde{\boX^1}$. Then,
$$\underset{x \in \R}{\inf} |v(x)| \leq \frac{E(v)}{\sqrt{2} |p(v)|}.$$
In particular, if $\delta(v) \equiv 1 - \frac{E(v)}{\sqrt{2} |p(v)|} > 0$, then, given any $0 < \delta < \delta(v)$, there exists some point $x_\delta \in \R$ such that
$$1 - |v(x_\delta)| \geq \delta.$$
\end{cor}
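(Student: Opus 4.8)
The plan is to derive the first inequality directly from the pointwise bound of Lemma \ref{colisee}, and then obtain the second assertion as an elementary consequence. Since $v \in \tilde{\boX^1}$, the function $v$ has no zeroes, so we may write globally $v = \varrho \exp i \varphi$ with $\varrho = |v|$ positive and smooth, and Lemma \ref{colisee} applies on all of $\R$. Set $m = \inf_{x \in \R} |v(x)| = \inf_{x \in \R} \varrho(x)$. The key observation is that, since $m \leq \varrho(x)$ for every $x$, we have the uniform bound $\frac{1}{\varrho(x)} \leq \frac{1}{m}$.

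First I would recall the definition
$$p(v) = \frac{1}{2} \int_\R (\varrho^2 - 1) \varphi',$$
and estimate it using the triangle inequality for integrals together with the pointwise bound from Lemma \ref{colisee}:
$$|p(v)| \leq \frac{1}{2} \int_\R \big| (\varrho^2 - 1) \varphi' \big| \leq \frac{1}{2} \int_\R \frac{\sqrt{2}}{\varrho} e(v) \leq \frac{\sqrt{2}}{2 m} \int_\R e(v) = \frac{E(v)}{\sqrt{2} \, m}.$$
Here the last equality uses $\int_\R e(v) = E(v)$ and $\frac{\sqrt{2}}{2} = \frac{1}{\sqrt{2}}$. Rearranging this inequality yields $m \leq \frac{E(v)}{\sqrt{2} |p(v)|}$, which is precisely the claimed bound $\inf_{x \in \R} |v(x)| \leq \frac{E(v)}{\sqrt{2} |p(v)|}$, at least when $p(v) \neq 0$; if $p(v) = 0$ the right-hand side is infinite and the bound is trivially true.

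For the second part, suppose $\delta(v) = 1 - \frac{E(v)}{\sqrt{2} |p(v)|} > 0$. Combining with the first inequality gives $m = \inf_{x \in \R} |v(x)| \leq 1 - \delta(v)$, so that $1 - m \geq \delta(v)$. Given any $0 < \delta < \delta(v)$, we then have $1 - m > \delta$, that is, $\sup_{x \in \R}\big(1 - |v(x)|\big) = 1 - m > \delta$. By definition of the supremum, there exists a point $x_\delta \in \R$ with $1 - |v(x_\delta)| \geq \delta$, which is the desired conclusion.

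I expect no serious obstacle here: the entire statement is a packaging of Lemma \ref{colisee} plus a division by the infimum of $\varrho$. The only point requiring a small amount of care is the degenerate case $p(v) = 0$ in the first inequality, where one should interpret the right-hand side as $+\infty$ so the bound holds vacuously; since the hypothesis $\delta(v) > 0$ in the second part forces $p(v) \neq 0$ anyway, this causes no difficulty in the application. One should also confirm that $\frac{1}{\varrho}$ is genuinely bounded, which is guaranteed by $v \in \tilde{\boX^1}$ together with Corollary \ref{longueur} ensuring $|v| \to 1$ at infinity, so that $m > 0$ and the division is legitimate.
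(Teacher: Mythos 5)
Your proof is correct and follows essentially the same route as the paper: both apply the pointwise bound of Lemma \ref{colisee} with $\frac{1}{\varrho} \leq \frac{1}{\inf_{\R} |v|}$, integrate against the definition \eqref{defnormmo} of $p(v)$, and note that the infimum is positive since $|v| \to 1$ at infinity and $v$ does not vanish. Your extra remarks (the trivial case $p(v) = 0$ and the elementary supremum argument for the second assertion) are fine and merely spell out what the paper leaves implicit.
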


\begin{proof}
Set $\delta_0 = \underset{x \in \R}{\inf} |v(x)|$. Since $|v(x)| \to 1$, as $|x| \to + \infty$, and $v$ is continuous and does not vanish on $\R$, its infimum $\delta_0$ is positive. Moreover, writing $v = \varrho \exp i \varphi$, it follows from Lemma \ref{colisee} that we have the pointwise bound
$$\Big| (\varrho^2-1) \varphi' \Big| \leq \frac{\sqrt{2}}{\delta_0} e(v),$$
and the conclusion follows from formula \eqref{defnormmo} by integration.
\end{proof}

Notice that in contrast, if $v \in \boX^1\setminus \tilde{\boX^1}$, then $\underset{x\in \R}{\inf} |v(x)| = 0$.

%%%%%%%%%%%%%%%%%%%%%%%%%%%%%%%%%%%%%%%%%%%%
\subsection{Minimality of the kink solution}
%%%%%%%%%%%%%%%%%%%%%%%%%%%%%%%%%%%%%%%%%%%%

The kink solution has the following remarkable minimization property.

\begin{lemma} 
\label{kinkmini}
We have
$$E(\v_0) = \inf \Big\{ E(v), v \in H^1_{\rm loc}(\R), \underset{x \in \R}{\inf} \big| v(x) \big| = 0 \Big\}.$$
In particular, if $E(v) < \frac{2 \sqrt{2}}{3}$, then,
$$\underset{x \in \R}{\inf} \big| v(x) \big| > 0.$$
\end{lemma}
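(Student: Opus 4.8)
The plan is to reduce the complex-valued problem to a scalar problem for the modulus and then bound the scalar energy from below by a Bogomolny-type pointwise inequality. We may of course assume $E(v) < + \infty$, otherwise the bound $E(v) \geq E(\v_0)$ is trivial. Setting $\varrho = |v|$, the diamagnetic inequality $|\varrho'| \leq |v'|$, valid almost everywhere for $v \in H^1_{\rm loc}(\R)$, gives the pointwise bound
$$e(v) \geq \frac{1}{2} (\varrho')^2 + \frac{1}{4} (1 - \varrho^2)^2,$$
so that it suffices to minimize the right-hand scalar energy over nonnegative functions $\varrho$ with $\inf_{\R} \varrho = 0$.

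Next I would establish the behaviour needed at infinity and the attainment of the zero. First, finite energy forces $v \in \boX^1$: arguing as in the proof of Lemma \ref{changi}, an unbounded value of $\varrho$ would produce, through the H\"older bound $|\varrho(x) - \varrho(y)| \leq \|v'\|_{L^2(\R)} |x - y|^\frac{1}{2}$, an interval of definite length on which $(1 - \varrho^2)^2$ is arbitrarily large, contradicting $E(v) < + \infty$. Hence $\varrho$ is bounded, so $v \in \boX^1$, and Corollary \ref{longueur} yields $\varrho(x) \to 1$ as $|x| \to + \infty$. Since $\inf_\R \varrho = 0$ while $\varrho$ is continuous with limit $1$ at $\pm \infty$, any sequence $x_n$ with $\varrho(x_n) \to 0$ stays bounded and accumulates at a point $x_0$ with $\varrho(x_0) = 0$.

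For the core estimate, I would introduce $H(s) = \int_0^s |1 - t^2| \, dt$, so that $H(0) = 0$, $H(1) = \frac{2}{3}$, and $H'(s) = |1 - s^2|$. The elementary inequality $\frac{1}{2} a^2 + \frac{1}{4} b^2 \geq \frac{1}{\sqrt 2} |ab|$, applied with $a = \varrho'$ and $b = 1 - \varrho^2$ exactly as in Lemma \ref{colisee}, gives
$$\frac{1}{2} (\varrho')^2 + \frac{1}{4} (1 - \varrho^2)^2 \geq \frac{1}{\sqrt 2} \Big| \frac{d}{dx} H(\varrho) \Big|.$$
Integrating over $\R$ and bounding the total variation of $H \circ \varrho$ below by the increments between the anchor values $H(\varrho(- \infty)) = \frac{2}{3}$, $H(\varrho(x_0)) = 0$ and $H(\varrho(+ \infty)) = \frac{2}{3}$ (splitting the integral at $x_0$) yields $E(v) \geq \frac{1}{\sqrt 2} (\frac{2}{3} + \frac{2}{3}) = \frac{2 \sqrt 2}{3}$. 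Since $\v_0(0) = 0$ and $E(\v_0) = \frac{2 \sqrt 2}{3}$ by \eqref{piquet}, the map $\v_0$ is admissible and attains the infimum, which proves the first assertion; the second assertion is then just its contrapositive, for if $E(v) < E(\v_0)$ the map $v$ cannot be admissible, so $\inf_\R |v| > 0$. The core inequality is immediate, so I expect the main obstacle to be the soft part of the second paragraph: justifying, under the mere $H^1_{\rm loc}$ assumption together with finite energy, that $v$ in fact lies in $\boX^1$ with $|v| \to 1$ at infinity, and that the value $0$ is genuinely attained at a finite point rather than only approached in the limit.
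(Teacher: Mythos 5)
Your proof is correct, but it follows a genuinely different route from the paper's. The paper argues by the direct method: it introduces the half-line problem $\boE_0 = \inf \big\{ \int_0^{+\infty} e(v), \ v \in H^1_{\rm loc}([0, +\infty)), \ v(0) = 0 \big\}$, extracts a minimizer by weak compactness and Fatou's lemma, identifies it through the Euler--Lagrange equation $u'' + u(1 - |u|^2) = 0$ as the kink $\v_0$, computes $\boE_0 = \frac{\sqrt{2}}{3}$, and then doubles this bound for a map on $\R$ vanishing at a point. You replace all of this by the diamagnetic reduction to the modulus together with the Modica--Mortola/Bogomolny pointwise inequality $e(v) \geq \frac{1}{\sqrt{2}} \big| (H \circ \varrho)' \big|$, $H(s) = \int_0^s |1 - t^2| \, dt$, so the sharp constant $\frac{2\sqrt{2}}{3} = \frac{2}{\sqrt{2}} \big( H(1) - H(0) \big)$ drops out of a pure integration, with no compactness argument and no ODE analysis. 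Each approach buys something: yours is shorter and more elementary, and it explicitly fills in a point the paper glosses over, namely that a finite-energy map with $\inf_\R |v| = 0$ is automatically bounded, hence lies in $\boX^1$, satisfies $|v| \to 1$ at infinity by Corollary \ref{longueur}, and therefore genuinely vanishes at some finite point $x_0$ --- a fact needed before one can split the integral at $x_0$ (the paper's proof simply starts from ``a map which vanishes at some point $x_0$''). The paper's route, in exchange, identifies the half-line minimizer as $\v_0$ within the same argument, so attainment and the structure of the minimizer come for free, whereas in your approach attainment is verified separately by evaluating $E(\v_0)$ via \eqref{piquet}, and any uniqueness statement would require tracking the equality cases in your pointwise inequalities (equality in the diamagnetic step forcing a constant phase, and equality in the arithmetic--geometric step forcing the first-order equation $\varrho' = \pm \frac{1}{\sqrt{2}}(1 - \varrho^2)$).
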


\begin{proof}
We consider a minimizing sequence $(v_n)_{n \in \N}$ for the minimizing problem
$$\boE_0 = \inf \bigg\{ \int_0^{+ \infty} e(v), v \in H^1_{\rm loc}([0, + \infty)), v(0) = 0 \bigg\},$$
which is well-defined by Sobolev embedding theorem. We notice that the functions $v_n'$ are uniformly bounded in $L^2([0, + \infty))$, and that $v_n(0) = 0$. Hence, by Rellich compactness theorem, there exists some function $u \in H^1_{\rm loc}([0, + \infty))$, with $u(0) = 0$, such that, up to a subsequence,
$$v_n' \rightharpoonup u' \ {\rm in} \ L^2([0, + \infty)), \ {\rm and} \ v_n \to u \ {\rm in} \ L^\infty_{\rm loc}([0, + \infty)), \ {\rm as} \ n \to + \infty.$$
By Fatou lemma, we are led to
$$\int_0^{+ \infty} e(u) = \frac{1}{2} \int_0^{+ \infty} {u' }^2 + \frac{1}{4} \int_0^{+ \infty} \underset{n \to + \infty}{\liminf} \big( 1 - |v_n|^2 \big)^2 \leq \underset{n \to + \infty}{\liminf} \int_0^{+ \infty} e(v_n),$$
so that the infimum $\boE_0$ is achieved by the function $u$. In particular, the solution $u$ is critical for the Ginzburg-Landau energy, i.e. it solves
$$u'' + u (1 - |u|^2) = 0.$$
Integrating this equation yields $u(x) = \v_0(x) = \th \Big( \frac{x}{\sqrt{2}} \Big)$, so that
$$\boE_0 = \int_0^{+ \infty} e(\v_0) = \frac{\sqrt{2}}{3}.$$
Next consider a map $v \in H^1_{\rm loc}(\R)$, with finite Ginzburg-Landau energy, and which vanishes at some point $x_0$. In view of the invariance by translation, we may assume that $x_0 = 0$, whereas the minimality of $\v_0$ yields
$\int_0^{+ \infty} e(v) \geq \frac{\sqrt{2}}{3}$,
and the same inequality holds for the energy on $(- \infty, 0]$, so that
$$E(v) \geq \frac{2 \sqrt{2}}{3} = E(\v_0).$$
The proof of Lemma \ref{kinkmini} follows.
\end{proof}

%%%%%%%%%%%%%%%%%%%%%%%%%%%%%%%%%%%%%%%
\subsection{Properties of the momentum} 
%%%%%%%%%%%%%%%%%%%%%%%%%%%%%%%%%%%%%%%

We provide in this subsection the proofs of Lemmas \ref{limite}, \ref{andouille}, \ref{limite2} and \ref{lem:andouille2}, as well as some additional properties.

\begin{proof}[Proof of Lemma \ref{limite}]
Let $v \in \boX^1$, and let $L = L(\frac{1}{2}, v)$ be the corresponding number provided by Corollary \ref{longueur} for $\delta = \frac{1}{2}$. On the intervals $(\pm L, \pm \infty)$, we may write $v = \varrho \exp i \varphi_\pm$, so that $\langle i v, v' \rangle = \varrho^2 \varphi_\pm'$. Next, given $R_2 > R_1 > L$, we have
\begin{equation}
\label{cauchyR}
P_{R_2}(v) - P_{R_1}(v) = \frac{1}{2} \int_{R_1}^{R_2} \varrho^2 \varphi_{+}' + \frac{1}{2} \int_{- R_2}^{- R_1} \varrho^2\varphi_{-}'. 
\end{equation}
We expand
$$\int_{\pm R_1}^{\pm R_2} \varrho^2 \varphi_{\pm}' = \int_{\pm R_1}^{\pm R_2} (\varrho^2 - 1) \varphi_{\pm}' + \int_{\pm R_1}^{\pm R_2} \varphi_{\pm}'.$$
For the first integral on the right-hand side, we use the bound $|(\varrho^2 - 1) \varphi_{\pm}'| \leq 4e(v)$ on $(\pm L, \pm\infty)$, so that 
 $$\bigg| \int_{\pm R_1}^{\pm R_2} (\varrho^2 - 1) \varphi_{\pm}' \bigg| \leq 
4 \int_{\pm R_1}^{\pm R_2} e(v) \to 0, \ {\rm as} \ R_1 \to + \infty .$$
For the second integral, since $v \in \boZ^1$, it has limits at infinity, so that
\begin{equation*}
\bigg| \int_{\pm R_1}^{\pm R_2} \varphi_{\pm}' \bigg| = \Big| \varphi_\pm(R_2) - \varphi_\pm(R_1) \Big| \to 0, \ {\rm as} \ R_1 \to + \infty. 
\end{equation*}
Hence,
$$P_{R_2}(v) - P_{R_1}(v) \to 0, \ {\rm as} \ R_1 \to + \infty,$$
and therefore $P_R(v)$ has a limit, which establishes the first statement.

Concerning the second statement, if $v$ belongs to $\tilde{\boZ^1}$, then $v=\varrho\exp i \varphi$ on the whole space $\R$, so that $\langle i v, v' \rangle = \varrho^2 \varphi'$, and
$$P_R(v) = \frac{1}{2} \int_{- R}^R \varrho^2 \varphi' = \frac{1}{2} \int_{- R}^R (\varrho^2 - 1) \varphi' + \frac{1}{2} \int_{- R}^R \varphi',$$
for any $R > 0$. The conclusion then follows as above.
\end{proof}

\begin{proof}[Proof of Lemma \ref{andouille}]
Let $V_0 \in \boZ^1$ and $w \in H^1(\R)$. Since
\begin{equation}
\label{guemene}
w(x) \to 0, \ {\rm as} \ |x| \to + \infty,
\end{equation}
and since $V_0$ admits limits at infinity, so does $V_0 + w$. 

Next, expanding
\begin{align*}
(1 - |V_0 + w|^2)^2 = (1 - & |V_0|^2)^2 + |w|^4 + 4 (\langle V_0, w \rangle)^2\\
- & 4 (1 - |V_0|^2 - |w|^2)(\langle V_0, w \rangle) - 2 (1 - |V_0|^2) |w|^2,
\end{align*}
and using the fact that $V_0$ is bounded in $L^\infty(\R)$, as well as the fact that $w$ and $1 - |V_0|^2$ are bounded in $L^q(\R)$ for any $2 \leq q \leq +\infty$, one checks that
$$E(w + V_0) < + \infty,$$
and therefore $V_0 + w \in \boZ^1$. Concerning formula \eqref{andouille1}, we have
$$\langle i (V_0 + w), (V_0 + w)' \rangle = \langle i V_0, V_0' \rangle + \langle i w, w' \rangle + \langle i w, V_0' \rangle + \langle i V_0, w' \rangle,$$
so that
$$\boP_R(w + V_0) = \boP_R(V_0) + \boP_R(w) + \frac{1}{2} \int_{- R}^R \big( \langle i w, V_0' \rangle + \langle i V_0, w' \rangle \big),$$
for any $R > 0$. Integrating by parts, we obtain
$$\int_{- R}^R \langle i V_0, w' \rangle = - \int_{- R}^R \langle i V_0', w \rangle + \big[\langle i V_0, w \rangle \big]_{- R}^R = \int_{- R}^R \langle i w, V_0' \rangle + \big[ \langle i V_0, w \rangle \big]_{- R}^R,$$
where the last term tends to $0$, as $R \to +\infty$ by \eqref{guemene}. Combining the previous identities, we are led to
\begin{equation}
\label{andouille4}
\boP_R(w + V_0) = \boP_R(V_0) + \boP_R(w) + \int_{- R}^R \langle i w, V_0' \rangle + \underset{R \to +\infty}{o}(1),
\end{equation}
and \eqref{andouille1} follows letting $R \to + \infty$.
\end{proof}

\begin{proof}[Proof of Lemma \ref{limite2}]
It is very similar to the proof of Lemma \ref{limite}. It suffices to replace \eqref{cauchyR} by an equality in $\R/\pi \Z$,
\begin{equation}
\label{cauchyR'}
\begin{split}
& \Big( P_{R_2}(v) - \frac{1}{2} \big( \arg v(R_2) - \arg v(- R_2) \big) \Big) - \Big( P_{R_1}(v)- \frac{1}{2} \big( \arg v(R_1) - \arg v(- R_1) \big) \Big)\\
& = \frac{1}{2} \int_{R_1}^{R_2} \varrho^2 \varphi_{+}' - \frac{1}{2} \big( \varphi_+(R_2) - \varphi_+(R_1) \big) + \frac{1}{2} \int_{- R_2}^{- R_1} \varrho^2 \varphi_{-}' - \frac{1}{2} \big( \varphi_-(- R_1) - \varphi_-(- R_2) \big)\\
& = \frac{1}{2} \int_{R_1}^{R_2} (\varrho^2 - 1) \varphi_{+}' + \frac{1}{2} \int_{- R_2}^{- R_1} (\varrho^2 - 1) \varphi_{-}',
\end{split}
\end{equation}
and the remaining part of the proof is identical.
\end{proof} 

\begin{proof}[Proof of Lemma \ref{lem:andouille2}]
Again, it is almost identical to the proof of Lemma \ref{andouille}, the conclusion being obtained by subtracting one half of $\arg(w + V_0)(R) - \arg(w + V_0)(- R)$ from both sides of \eqref{andouille4} and noticing that in view of \eqref{guemene}, $\arg(w + V_0)(\pm R) - \arg V_0(\pm R)$ tends to $0$, as $R \to
+ \infty$.
\end{proof}

We will also use the continuity of the untwisted momentum.

\begin{lemma}
\label{lem:lip}
The untwisted momentum $[p]$ is a locally Lipschitz continuous map from $\boX^1$ to $\R / \pi \Z$ for the distance $d_{A, \boX^1}$. 
\end{lemma}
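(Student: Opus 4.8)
The plan is to establish the local estimate $\bigl|[p](v_2)-[p](v_1)\bigr|\le C\,d_{A,\boX^1}(v_1,v_2)$ (distance measured in $\R/\pi\Z$) for $v_1,v_2$ in a small $d_{A,\boX^1}$-ball around an arbitrary fixed $v_*\in\boX^1$, with $C$ depending only on $v_*$ and $A$; this is exactly the assertion of local Lipschitz continuity. The natural tool is the additive formula of Lemma \ref{lem:andouille2}, but it cannot be applied directly: the difference $w:=v_2-v_1$ need not lie in $L^2(\R)$, since a constant phase rotation at infinity is $d_{A,\boX^1}$-small yet keeps $|w|$ bounded away from $0$. I will therefore argue from the truncated expression behind Lemma \ref{limite2}.

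First I localize the modulus. By Corollary \ref{longueur} fix $L\ge A$ with $|v_*|\ge 3/4$ outside $[-L,L]$. For $v$ in the ball the function $f=|v|-|v_*|$ obeys $\|f\|_{L^2}\le d_{A,\boX^1}(v,v_*)$ and, since $\bigl||v|'\bigr|\le|v'|$, $\|f'\|_{L^2}\le\|v'\|_{L^2}+\|v_*'\|_{L^2}$, which stays bounded on the ball; the one-dimensional inequality $\|f\|_{L^\infty}\le\sqrt2\,\|f\|_{L^2}^{1/2}\|f'\|_{L^2}^{1/2}$ then makes $\||v|-|v_*|\|_{L^\infty}$ as small as we wish. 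Hence, after shrinking the ball, $|v_j|\ge1/2$ on $\R\setminus[-L,L]$ with $L$ \emph{uniform} over the ball, and there we write $v_j=\varrho_j\exp(i\varphi_j)$. Lemma \ref{limite2} then gives, modulo $\pi$,
$$[p](v_2)-[p](v_1)=\frac12\int_{-L}^{L}\bigl(\langle iv_2,v_2'\rangle-\langle iv_1,v_1'\rangle\bigr)+\bigl(\text{phase terms at }\pm L\bigr)+\frac12\int_{|x|>L}\bigl((\varrho_2^2-1)\varphi_2'-(\varrho_1^2-1)\varphi_1'\bigr),$$
the outer integrals converging absolutely by Lemma \ref{colisee}.

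The first two groups are routine. Expanding $\langle iv_2,v_2'\rangle-\langle iv_1,v_1'\rangle=\langle iw,v_2'\rangle+\langle iv_1,w'\rangle$ and applying Cauchy--Schwarz on $[-L,L]$, together with $\|w\|_{L^\infty([-L,L])}\le\|w\|_{L^\infty([-A,A])}+\sqrt{L-A}\,\|w'\|_{L^2}$ (integrate $w'$ out of $[-A,A]$), bounds the inner integral by $C(L)\,d_{A,\boX^1}(v_1,v_2)$; the boundary phase differences $\varphi_2(\pm L)-\varphi_1(\pm L)$ are likewise controlled by $|w(\pm L)|\le C(L)\,d_{A,\boX^1}(v_1,v_2)$. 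In the outer integral I first remove $\tfrac12\int(\varrho_2^2-\varrho_1^2)\varphi_1'$, bounded by $\|\varrho_2^2-\varrho_1^2\|_{L^2}\|\varphi_1'\|_{L^2}\le C\,d_{A,\boX^1}$ because $\|\varphi_1'\|_{L^2(|x|>L)}\le2\|v_1'\|_{L^2}$. For the remainder $\int(\varrho_2^2-1)(\varphi_2'-\varphi_1')$ I use the polar identity, with $\eta=\varrho_2-\varrho_1$ and $e_j=v_j/\varrho_j$,
$$\varrho_2(\varphi_2'-\varphi_1')=\langle w',ie_2\rangle-\frac{\eta}{\varrho_1}\,\langle v_1',ie_1\rangle+\langle v_1',i(e_2-e_1)\rangle,$$
and test against the $L^2$ weight $(\varrho_2^2-1)/\varrho_2$. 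The first two summands contribute at most $C\|\varrho_2^2-1\|_{L^2}\|w'\|_{L^2}+C\|\eta\|_{L^2}\|v_1'\|_{L^2}\le C\,d_{A,\boX^1}$.

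The remaining term $\int_{|x|>L}\tfrac{\varrho_2^2-1}{\varrho_2}\langle v_1',i(e_2-e_1)\rangle$ is the crux, and I expect it to be the hardest step. Here $|e_2-e_1|=2\bigl|\sin\tfrac{\psi}{2}\bigr|$, with $\psi:=\varphi_2-\varphi_1$, is \emph{not} controlled pointwise by $d_{A,\boX^1}$, since the phase may drift at infinity. Splitting $e_2-e_1=\varrho_2^{-1}w-\varrho_1^{-1}\varrho_2^{-1}v_1\eta$ isolates a harmless $\eta$-part and leaves $\int\tfrac{\varrho_2^2-1}{\varrho_2^2}\langle v_1',iw\rangle$, in which the non-$L^2$ factor $w$ is tested against the $L^2$ weight $1-|v_2|^2$. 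The idea is to combine the boundary bound $|\psi(L)|\le C\,d_{A,\boX^1}$ with $\bigl|\sin\tfrac{\psi(x)}2\bigr|\le\bigl|\sin\tfrac{\psi(L)}2\bigr|+\tfrac12\int_L^x|\psi'|$ and a Fubini exchange, reducing to $C\,d_{A,\boX^1}\|W\|_{L^1}+\int_L^\infty|\psi'(y)|\bigl(\int_y^\infty W\bigr)\,dy$ with the $L^1$ weight $W=|\varrho_2^2-1|\,|v_1'|$; the decisive point is that $\psi'$ is slaved to $w'$ through $\langle w',ie_2\rangle$, so that the smallness of $\|v_1'-v_2'\|_{L^2}$ forces the phase mismatch to be of order $d_{A,\boX^1}$ precisely where $v_1'$ concentrates. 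Quantifying this interplay — with the $L^2$ weight $1-|v_2|^2$ absorbing the otherwise harmless non-decay of the phase — closes the estimate and yields the desired local Lipschitz bound.
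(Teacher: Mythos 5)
Your overall strategy coincides with the paper's: localize the modulus outside a fixed interval (Corollary \ref{longueur} plus $L^\infty$--$L^2$ interpolation), write $[p]$ through the truncated expression behind Lemma \ref{limite2}, and estimate separately the inner integral, the boundary phase terms, and the two tail integrals --- this is exactly the paper's identity \eqref{eq:alette}, and your treatment of every term except the last one is correct and matches the paper's. The difference is in the term you rightly call the crux, $\int_{|x|>L}(\varrho_2^2-1)(\varphi_2'-\varphi_1')$: the paper disposes of it in one sentence ``by Cauchy--Schwarz,'' while you recognize that Cauchy--Schwarz does not apply because $\|\varphi_2'-\varphi_1'\|_{L^2(|x|>L)}$ is \emph{not} controlled by $d_{A,\boX^1}(v_1,v_2)$, and you attempt to supply a substitute argument. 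That attempt is where the gap lies, and it is not a fixable detail: the estimate you are aiming at there --- that this term, as a real number, is $O(d_{A,\boX^1})$ --- is actually false. Take $v_1=e^{i\beta}\,\v_c(\cdot-B)$, a gray soliton \eqref{schumi} of small speed $c$ whose dip sits at $B\gg A$, and let $v_2=v_1$ on $(-\infty,B-10]$ and $v_2=\lambda\overline{v_1}$ on $[B-10,+\infty)$, with $\lambda=v_1(B-10)/\overline{v_1(B-10)}$ ensuring continuity. Then $|v_2|=|v_1|$ everywhere, $v_2=v_1$ on $[-A,A]$, and $v_1'-v_2'$ is (up to a unimodular constant) $2i\,\Im\bigl(\overline{v_1(B-10)}\,v_1\bigr)'$, whose $L^2$ norm is $O(c)$; so $d_{A,\boX^1}(v_1,v_2)=O(c)$. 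But $\varphi_2'=-\varphi_1'$ on the tail, so the crux term equals $2\int(\varrho_1^2-1)\varphi_1'\approx 4p(\v_c)\to 2\pi$ (with your normalization; $\to\pi$ with the paper's factor $\tfrac12$) as $c\to0$. The term is of order one while the distance vanishes: it is small only \emph{modulo} $\pi$, and your sketch of the crux step never invokes the quotient by $\pi\Z$ at all, so it cannot succeed.

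Concretely, the mechanism you propose is also circular: by your own polar identity, $\varrho_2(\varphi_2'-\varphi_1')=\langle w',ie_2\rangle-\tfrac{\eta}{\varrho_1}\langle v_1',ie_1\rangle+\langle v_1',i(e_2-e_1)\rangle$, so $\psi'$ is \emph{not} slaved to $w'$ --- it contains the term $\langle v_1',i(e_2-e_1)\rangle$, which is precisely the quantity being estimated. Feeding this back into $\bigl|\sin\tfrac{\psi(x)}{2}\bigr|\le\bigl|\sin\tfrac{\psi(L)}{2}\bigr|+\tfrac12\int_L^x|\psi'|$ and applying Gr\"onwall produces a factor $\exp\bigl(\int_L^x|v_1'|\bigr)\le\exp\bigl(\sqrt{x-L}\,\|v_1'\|_{L^2}\bigr)$, useless on an unbounded interval; and your Fubini reduction requires first-moment quantities such as $\int_L^\infty\sqrt{x-L}\,|v_1'|\,|\varrho_2^2-1|$, which are not finite for general elements of $\boX^1$. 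A correct proof must use that an order-one relative phase winding in the far field can only occur across a region where $v_1$ carries roughly the energy of a (near-)black soliton, so that the energy bound quantizes the possible windings and the residual contribution is $O(d)$ once reduced modulo $\pi$ --- this is what makes the example above compatible with the Lemma ($[p](v_1)-[p](v_2)\equiv 2p(\v_c)-\pi=O(c)$ modulo $\pi$). To be fair, the paper's own one-line Cauchy--Schwarz claim for the last two lines of \eqref{eq:alette} runs into exactly the same objection, so you have located a genuine soft spot in the published argument; but your proposal, as written, does not fill it.
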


\begin{proof}
Let $u \in \boX^1$ be given and $R>0$ be such that $|u(x)| \geq \frac{1}{2}$ on $\R \setminus (- R, R)$. If $\delta > 0$ is sufficiently small, and if $v\in \boX^1$ is such that $d_{A, \boX^1}(u, v) \leq \delta$, then $|v(x)| \geq \frac{1}{3}$ on $\R \setminus (- R, R)$. We then have, in view of \eqref{cauchyR'} and the definition of $[p]$,
\begin{equation}
\label{eq:alette}
\begin{split}
[p](u) - [p](v) & = \big( P_R(u) - P_R(v) \big) - \frac{1}{2} \big( \arg u(R) - \arg v(R) \big) + \frac{1}{2} \big( \arg u(- R) - \arg v(- R) \big)\\
& + \frac{1}{2} \int_{- \infty}^{- R} \Big( (|u|^2 - |v|^2) \varphi_{u, -}' +
(|v|^2 - 1) (\varphi_{u, -}' - \varphi_{v, -}') \Big)\\
& + \frac{1}{2} \int_R^{+ \infty} \Big( (|u|^2 - |v|^2) \varphi_{u, +}' +
(|v|^2 - 1) (\varphi_{u, +}' - \varphi_{v, +}') \Big) \ {\rm mod} \ \pi,
\end{split}
\end{equation} 
where $\varphi_{u, \pm}$ and $\varphi_{v, \pm}$ denote representatives for the phases of $u$ and $v$ on $(- \infty, - R]$ and $[R, + \infty)$. By Cauchy-Schwarz inequality, the terms in the last two lines of \eqref{eq:alette} are bounded by a constant (which depends only on $E(u)$) times $d_{A,\boX^1}(u,v)$. By Cauchy-Schwarz inequality and Sobolev embedding theorem, the terms in the first line of \eqref{eq:alette} are bounded by a constant (which depends on $R$ and $E(u)$) times $d_{A,\boX^1}(u,v)$. This completes the proof of Lemma \ref{lem:lip}.
\end{proof}

\begin{remark}
Refining the proof of Lemma \ref{lem:lip} by using the decomposition in Lemma \ref{changi}, one could actually show that the local Lipschitz constant depends only on a bound on $E(u)$, and not on $R$.
\end{remark}

In our study of the minimization problem $\E_{\min}(\frac{\pi}{2})$, we shall need the following construction.

\begin{lemma}
\label{construct}
Let $0 < |\q| \leq \frac{1}{32}$ and $0 \leq \mu \leq \frac{1}{4}$. There exists some number $\ell > 1$, and a map $w = |w| \exp i \psi \in H^1([0, \ell])$, such that
\begin{equation}
\label{quoidonc0}
w(0) = w(\ell), \ \big| 1- |w(0)| \big \vert = \mu,
\end{equation}
\begin{equation}
\label{quoidonc1}
\q = \frac{1}{2} \int_0^\ell |w|^2 \psi',
\end{equation}
and
\begin{equation}
\label{quoidonc3}
E(w) \leq 14 |\q|.
\end{equation}
\end{lemma}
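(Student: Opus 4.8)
The plan is to exhibit $w$ explicitly as a closed loop in $\C$ based at a point of modulus $1-\mu$, and to read the momentum $\frac{1}{2}\int_0^\ell |w|^2\psi'$ off as the signed area swept by the loop. Since replacing $w$ by its complex conjugate reverses the sign of $\frac12\int_0^\ell \langle i w, w'\rangle$ while leaving $E(w)$ and $|w|$ unchanged, I may assume $\q > 0$. I fix $|w(0)| = 1-\mu$ (the choice $1+\mu$ being symmetric) and build the loop from four consecutive pieces: a radial segment from $(1-\mu,0)$ out to $(1,0)$, an arc along the unit circle on which the phase increases from $0$ to some angle $\beta$, a radial segment back in to modulus $1-\mu$, and finally an arc along the circle of radius $1-\mu$ on which the phase decreases from $\beta$ back to $0$. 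On each piece either $\varrho$ is constant and $\psi$ monotone (the arcs) or $\psi$ is constant and $\varrho$ monotone (the radial segments), so $w \in H^1([0,\ell])$, while $w(0) = w(\ell) = (1-\mu,0)$ and the first part of \eqref{quoidonc0} hold by construction.

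Next I compute the momentum. The radial segments carry $\psi' = 0$ and contribute nothing; the outer arc, on which $\varrho \equiv 1$, contributes $\frac12\beta$; and the inner arc, on which $\varrho \equiv 1-\mu$ with $\psi$ running from $\beta$ down to $0$, contributes $-\frac12 (1-\mu)^2 \beta$. Hence $\frac12\int_0^\ell |w|^2 \psi' = \frac{\beta}{2}\big(1-(1-\mu)^2\big) = \frac{\beta}{2}(2\mu - \mu^2)$, which is precisely the area of the annular sector of opening $\beta$ between the radii $1-\mu$ and $1$. I therefore fix $\beta = \frac{2\q}{2\mu-\mu^2}$, so that \eqref{quoidonc1} holds exactly. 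The only remaining freedom is the parametrization, i.e. how much of $[0,\ell]$ each of the four pieces occupies; this freedom will be spent on the energy estimate and on guaranteeing $\ell > 1$.

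The heart of the argument is the energy bound \eqref{quoidonc3}, which I split over the four pieces. On the outer arc $\varrho \equiv 1$, so the potential term vanishes and only the Dirichlet term $\frac12\int \psi'^2$ survives; spreading the sweep of $\beta$ over a long parameter interval makes this term as small as desired, and it is here that I simultaneously arrange $\ell > 1$. On each radial segment $\psi' = 0$, and the pointwise inequality $\frac12 \varrho'^2 + \frac14(1-\varrho^2)^2 \ge \frac{1}{\sqrt2}|\varrho'|(1-\varrho^2)$, i.e. the bound $|ab| \le \frac12(a^2+b^2)$ already used in Lemma \ref{colisee}, integrates to a contribution as close as we wish to $g(\mu) := \frac{1}{\sqrt2}\int_{1-\mu}^1 (1-s^2)\,ds$. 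On the inner arc $\varrho \equiv 1-\mu$ is constant, and choosing its parameter length to balance the Dirichlet term $\frac{(1-\mu)^2\beta^2}{2s}$ against the potential term $\frac14(2\mu-\mu^2)^2 s$ yields a contribution at most $\frac{1}{\sqrt2}(1-\mu)(2\mu-\mu^2)\beta = \sqrt2\,(1-\mu)\q \le \sqrt2\,\q$, the factor $\beta$ cancelling against $2\mu-\mu^2$. Summing the four pieces gives $E(w) \le 2g(\mu) + \sqrt2\,\q + \varepsilon$, where $\varepsilon$ (the outer-arc Dirichlet term) is at our disposal.

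The main obstacle is then the comparison of the modulus-transition cost $2g(\mu)$ with the allowance $14|\q|$: the decisive quantitative estimate to establish is $2g(\mu) \le (14-\sqrt2)|\q|$, and since $g(\mu) = \frac{1}{\sqrt2}\int_{1-\mu}^1(1-s^2)\,ds = O(\mu^2)$, this is exactly the place where the hypotheses $0 < |\q| \le \frac{1}{32}$ and $0 \le \mu \le \frac14$ enter, controlling the transition cost by the prescribed momentum; verifying this inequality, together with checking that the balancing parametrization can be realized with total length $\ell > 1$, is where I expect the real work to lie. Finally, the degenerate case $\mu = 0$ (where the radial segments collapse and $\beta$ blows up) is handled by the symmetric variant in which the outer arc is pushed slightly \emph{outside} the unit circle, to modulus $1+\nu$, with $\nu$ then playing the role of $\mu$; this variant also makes transparent why small $\mu$ keeps the transition cost negligible against $\q$.
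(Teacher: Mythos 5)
Your construction and the momentum-as-area computation are correct as far as they go, but the step you defer as ``where the real work lies'' is not merely hard, it is false: nothing in the hypotheses couples $\mu$ to $\q$, which are independent parameters. Take $\mu = \frac{1}{4}$ and $|\q| \to 0$: then $2g(\mu) = \sqrt{2}\,\bigl(\mu^2 - \frac{\mu^3}{3}\bigr) \approx 0.08$ is a fixed positive number, while $(14-\sqrt{2})|\q| \to 0$, so the inequality $2g(\mu)\le(14-\sqrt{2})|\q|$ fails. This regime is exactly the one for which the lemma is designed: in the proof of Lemma \ref{onlyweak} it is invoked with $\q = \q_{n,t} = O(t^2) + \underset{n \to + \infty}{o}(1)$, which tends to $0$, while $\mu = \mu_{n,t}$ need not. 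Moreover, the failure is not specific to your particular loop: any map with $w(0)=w(\ell)$, $\ell>1$, and $\bigl|1-|w(0)|\bigr| = \mu$ exactly has energy at least of order $\mu^2$, since either its modulus comes within $\mu/2$ of $1$ somewhere, which costs at least $\frac{1}{\sqrt{2}}\bigl|\int (1-\varrho^2)\varrho'\bigr| \gtrsim \mu^2$ by the very inequality you quote from Lemma \ref{colisee}, or its modulus stays at distance at least $\mu/2$ from $1$, which costs potential energy $\gtrsim \mu^2$ over a length $\ell > 1$. So under your literal reading of \eqref{quoidonc0} the statement is unprovable whenever $|\q| \ll \mu^2$, and no amount of work can complete your plan.

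The paper's construction sidesteps this precisely by not pinning the base point at modulus $1-\mu$: it takes $w(0)=w(\ell)=\sqrt{1-\delta}$ with $\delta = \min\{\mu^2, 8|\q|\}$, so that \eqref{quoidonc0} is in fact achieved only as the inequality $\bigl|1-|w(0)|\bigr|\le\mu$ (the equality sign in the statement is a slip of the paper), and, crucially, the entire modulus excursion has amplitude $O(|\q|)$ rather than $O(\mu)$. The mechanism producing momentum at energy cost $O(|\q|)$ is also different from an area count at fixed modulus deficit: both the modulus dip $f_\lambda$ (amplitude $\frac{1}{2\lambda}$) and the phase slope $\psi_\lambda'$ (size $\frac{1}{\lambda}$) are spread over an interval of length $2\lambda = \frac{1}{4|\q|}$, so each quadratic term in the energy integrates to $O(\frac{1}{\lambda}) = O(|\q|)$, while the bilinear term $\frac{1}{2}\int f_\lambda \psi_\lambda'$ is arranged to equal $\q$ exactly; the hypothesis $|\q|\le\frac{1}{32}$ then also guarantees $\ell = 2\lambda > 1$. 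If you modify your loop so that its radial excursion is of size comparable to $|\q|$ instead of $\mu$, and accept the inequality form of \eqref{quoidonc0}, you essentially recover the paper's proof.
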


\begin{proof}
Consider the functions $f_1$ and $\psi_1$ defined on the interval $[0, 2]$ by
$$f_1(s) = s \ {\rm on } \ \Big[ 0, \frac{1}{2} \Big], \ f_1(s) = 1 - s \ {\rm on } \ \Big[ \frac{1}{2}, 1 \Big], \ {\rm and} \ f_1(s) = 0 \ {\rm on} \ \Big[ 1, 2 \Big],$$
and
$$\psi_1(s) = s \ {\rm on} \ [0, 1], \ {\rm and} \ \psi_1(s) = 2 - s \ {\rm on} \ [0, 1].$$
For a given positive number $\lambda$, we consider the functions defined on $[0, 2 \lambda]$ by
$$f_\lambda(s) = \frac{1}{\lambda} f \Big( \frac{s}{\lambda} \Big), \ {\rm and} \ \psi_\lambda(s) = \psi \Big( \frac{s}{\lambda} \Big),$$
so that $|f_\lambda| \leq \frac{1}{2 \lambda}$, $|\psi_\lambda'| = \frac{1}{\lambda}$, $f_\lambda(0) = f_\lambda(2 \lambda) = 0$, $\psi_\lambda(0) = \psi_\lambda(2 \lambda) = 0$, and
\begin{equation}
\label{vroum-vroum}
\int_0^{2 \lambda} f_\lambda \psi_\lambda' = \frac{1}{4 \lambda}, \ \int_0^{2 \lambda} f_\lambda = \frac{1}{4}, \ \int_0^{2 \lambda} f_\lambda^2 = \frac{1}{12 \lambda}, \int_0^{2 \lambda} (f_\lambda')^2 = \frac{1}{\lambda^3}, \ {\rm and} \ \int_0^{2 \lambda} (\psi_\lambda')^2 = \frac{2}{\lambda}.
\end{equation}
We then choose $\lambda = \frac{1}{8 |\q|}$, so that, $\frac{1}{\lambda} \leq \frac{1}{4}$, introduce a new parameter $\delta > 0$ to be determined later, and consider the function
$$\rho_{\lambda, \delta} = \sqrt{1 - \delta - f_\lambda},$$
so that $1 - \rho_{\lambda, \delta}^2 = f_\lambda + \delta$. It follows from our choice of parameter $\lambda$ that
\begin{equation}
\label{revroum}
|\q| = \frac{1}{2} \int_0^{2 \lambda} f_\lambda \psi_\lambda' = \frac{1}{2} \int_0^{2 \lambda} (f_\lambda + \delta - 1) \psi_\lambda' = - \frac{1}{2} \int_0^{2 \lambda} \rho_{\lambda, \delta}^2 \psi_\lambda'.
\end{equation}
We finally choose $\ell = 2\lambda$ and
$$w = \left\{ \begin{array}{ll} \rho_{\lambda, \delta} \exp (- i \psi_\lambda), \ {\rm if} \ \q > 0,\\ \rho_{\lambda, \delta} \exp i \psi_\lambda, \ {\rm if} \ \q < 0. \end{array} \right.$$
Condition \eqref{quoidonc1} is fulfilled with this choice of $w$ in view of \eqref{revroum}. Moreover, by construction, $w(0) = w(\ell) = \sqrt{1 - \delta}$, so that conditions \eqref{quoidonc0} are satisfied for any $\delta \leq \mu^2$. We finally compute
$$E(w) = \int_0^{2 \lambda} \bigg( \frac{(f_\lambda')^2}{8 (1 - \delta - f_\lambda)} + \Big( 1 - \delta - f_\lambda \Big) \frac{(\psi_\lambda')^2}{2} + \frac{f_\lambda^2}{4} + \frac{\delta f_\lambda}{2} + \frac{\delta^2}{4} \bigg),$$
so that, since
$$0 \leq f_\lambda + \delta \leq \frac{1}{2 \lambda} + \delta \leq \mu^2 + \frac{1}{8} \leq \frac{1}{2},$$
it follows from \eqref{vroum-vroum} that
$$E(w) \leq \int_0^{2 \lambda} \bigg( \frac{(f_\lambda')^2}{4} + \frac{(\psi_\lambda')^2}{2} + \frac{f_\lambda^2}{4} + \frac{\delta f_\lambda}{2} + \frac{\delta^2}{4} \bigg) \leq \frac{1}{4 \lambda^3} + \frac{49}{48 \lambda} + \frac{\delta}{8} + \frac{\delta^2 \lambda}{2}.$$
Inequality \eqref{quoidonc3} follows choosing $\delta = \min \{ \mu^2, \frac{1}{\lambda} \}$. 
\end{proof}

%%%%%%%%%%%%%%%%%%%%%%%%%%%%%%%%%%%%%
%%%%%%%%%%%%%%%%%%%%%%%%%%%%%%%%%%%%%
\section{Proof of Theorem \ref{laye}}
\label{sect:3}
%%%%%%%%%%%%%%%%%%%%%%%%%%%%%%%%%%%%%
%%%%%%%%%%%%%%%%%%%%%%%%%%%%%%%%%%%%%

In this section, we undertake the study of sequences $(u_n)_{n \in \N}$ in the space $\boX^1$ verifying
\begin{equation}
\label{truie2}
\begin{split}
[p_n] & \equiv [p](u_n) \to \frac{\pi}{2},\\
& {\rm and}\\
E(u_n) & \to \E_{\min} \Big( \frac{\pi}{2} \Big), \ {\rm as} \ n \to + \infty.
\end{split}
\end{equation}
This study will eventually lead us to the proof of Theorem \ref{laye}. We first have

\begin{lemma}
\label{onlyweak}
Let $(u_n)_{n \in \N}$ be a sequence of maps satisfying \eqref{truie2}. Then, there exist a subsequence $(u_{\sigma(n)})_{n \in \N}$ and a solution $v_c$ to \eqref{TWc} such that
$$u_{\sigma(n)} \rightharpoonup v_c \ {\rm in} \ H^1([- B, B]), \ {\rm as} \ n \to + \infty,$$
for any $B > 0$ Moreover, when $v_c$ is not identically constant, there exist some numbers $\theta$ and $a$ such that $v_c = \exp i \theta \ \v_c(\cdot + a)$.
\end{lemma}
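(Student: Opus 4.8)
The plan is to establish first a uniform energy bound along the sequence, then extract a weak limit on bounded intervals, and finally identify that limit as a solution to \eqref{TWc} by passing to the limit in the variational characterization. Since $E(u_n) \to \E_{\min}(\frac{\pi}{2}) = E(\v_0) = \frac{2\sqrt 2}{3}$ (using Lemma \ref{verrat}, or more directly the definition \eqref{porcelet}), the energies $E(u_n)$ are bounded, say by some constant $M$. In particular the derivatives $u_n'$ are bounded in $L^2(\R)$, and by Corollary \ref{longueur} together with the uniform energy bound, the maps $u_n$ are bounded in $L^\infty(\R)$ uniformly in $n$. Consequently, for each fixed $B > 0$, the sequence $(u_n)$ is bounded in $H^1([-B,B])$, and a diagonal extraction over $B \in \N$ produces a subsequence $(u_{\sigma(n)})$ and a limit $v_c \in H^1_{\mathrm{loc}}(\R)$ with $u_{\sigma(n)} \rightharpoonup v_c$ weakly in $H^1([-B,B])$ for every $B$, and (by Rellich) strongly in $L^\infty_{\mathrm{loc}}(\R)$.

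The core of the argument is to show that $v_c$ solves \eqref{TWc} for some speed $c$. The natural route is to use the variational interpretation emphasized in the introduction: minimizers of the energy at fixed renormalized momentum satisfy \eqref{TWc} with $c$ appearing as a Lagrange multiplier. I would argue that $v_c$ is a constrained critical point of the energy by testing against compactly supported perturbations. Concretely, for a test function $h \in C_c^\infty(\R,\C)$ supported in some $[-B,B]$, one perturbs $u_{\sigma(n)}$ by $t h$ and uses the constraint on $[p]$ together with the near-minimality of $E(u_{\sigma(n)})$ to derive, in the limit, the Euler--Lagrange equation. The untwisted momentum behaves well under $H^1$ perturbations by Lemma \ref{lem:andouille2}, so its first variation is $\frac{1}{2}\int_\R \langle ih, v_c'\rangle$ type terms that combine correctly; the energy's first variation produces $-v_c'' - v_c(1-|v_c|^2)$. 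Matching the two, with a single scalar multiplier $c$ coming from the one-dimensional constraint $[p] = \frac{\pi}{2}$, yields exactly $-icv_c' + v_c'' + v_c(1-|v_c|^2) = 0$, i.e. \eqref{TWc}. The weak convergence suffices to pass to the limit in the linear and quadratic terms, and elliptic regularity then upgrades $v_c$ to a smooth classical solution.

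The final assertion follows from the classification of solutions to \eqref{TWc} recalled in the introduction around \eqref{schumi}: every non-constant finite-energy solution is, up to the invariances, of the form $\v_c$ with $-\sqrt 2 < c < \sqrt 2$. Since $v_c$ arises as a local limit of maps with energy bounded by $M$, Fatou's lemma gives $E(v_c) \leq M < +\infty$ (at least its energy on any bounded interval is controlled, and lower semicontinuity caps the total energy), so $v_c$ is a genuine finite-energy solution and the classification applies: when $v_c$ is non-constant there exist $\theta$ and $a$ with $v_c = \exp i\theta\,\v_c(\cdot + a)$.

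I expect the main obstacle to be the rigorous derivation of the Euler--Lagrange equation from the constrained near-minimality, specifically justifying that a \emph{single} Lagrange multiplier governs the first variation in the limit. The subtlety is that $(u_n)$ is only an \emph{almost}-minimizing sequence for the constrained problem, not an exact minimizer, so one cannot directly write a Lagrange condition; instead one must carefully exploit the concavity structure of $E_{\min}$ (Lemma \ref{tildeemin}) or a quantitative comparison argument to control the multiplier uniformly in $n$ before passing to the limit. A secondary technical point is the loss of compactness at infinity — weak convergence on bounded intervals does not by itself prevent energy or momentum from escaping to $\pm\infty$ — but since the lemma only claims weak local convergence and identification of the limiting profile, this concentration-compactness issue is deferred to the subsequent analysis and need not be resolved here.
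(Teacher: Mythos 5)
Your overall framework (uniform energy bound, weak $H^1_{\rm loc}$ limit after diagonal extraction, identification of the limit as a constrained critical point, classification via \eqref{schumi}) coincides with the paper's strategy, and you correctly diagnose the crucial difficulty: $(u_n)$ is only an \emph{almost}-minimizing sequence whose constraint is satisfied only asymptotically, so no Lagrange multiplier rule applies directly. But your proposal stops exactly at that diagnosis, and the two ideas the paper uses to overcome it are missing. First, the paper does not test with arbitrary $h \in C_c^\infty$ and then try to ``match'' first variations with a multiplier; it tests only with compactly supported $\xi$ satisfying the orthogonality condition $\int_{\R} \langle i u, \xi' \rangle = 0$ of \eqref{ortho}, where $u$ is the weak limit. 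For such $\xi$, Lemma \ref{lem:andouille2} together with the locally uniform convergence of $u_n$ to $u$ gives $[p](u_n + t \xi) = \frac{\pi}{2} + O(t^2) + o_n(1)$, i.e. the constraint is preserved to first order; no multiplier has to be identified or controlled along the sequence, and the constant $c$ appears only at the very end, from the Hilbert-space fact that a function which is $L^2$-orthogonal to every compactly supported $\xi$ orthogonal to $i u'$ must be a constant multiple of $i u'$. Second, since $[p](u_n + t\xi)$ still misses $\frac{\pi}{2}$ by a defect $\q_{n,t} = O(t^2) + o_n(1)$, the perturbed map is \emph{not} admissible for $\E_{\min}(\frac{\pi}{2})$; the paper restores the constraint exactly by grafting, far away from $[-B, B]$ at a point furnished by the mean value theorem, the small loop $w_{n,t}$ of Lemma \ref{construct}, which carries momentum $\q_{n,t}$ at energy cost at most $14 |\q_{n,t}| = O(t^2) + o_n(1)$. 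Only after this surgery does near-minimality yield the one-sided inequality \eqref{claim}, from which the equation follows by letting $n \to + \infty$ and then $t \to 0^{\pm}$. Without these two devices your sketch of the Euler--Lagrange step is not a proof, as you yourself concede.

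Your fallback suggestion, exploiting the concavity of $E_{\min}$ from Lemma \ref{tildeemin}, would not fill this gap: that concavity concerns problem \eqref{cochette1}, posed on $\tilde{\boX^1}$, and is used by the paper only later, in Step \ref{stepquatre} of the proof of Theorem \ref{laye}, to exclude splitting of the momentum among several profiles; it does not by itself produce an Euler--Lagrange equation for an almost-minimizing sequence. A minor additional point: the uniform $L^\infty$ bound needed for $H^1([-B,B])$ boundedness follows from the H\"older estimate $|u_n(x) - u_n(y)| \leq \sqrt{2 E(u_n)} \, |x - y|^{1/2}$ combined with the potential term of the energy, not from Corollary \ref{longueur}, which provides a lower bound on $|u_n|$ outside a set whose size depends on $n$, not merely on the energy bound.
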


\begin{proof}
Since $(E(u_n))_{n \in \N}$ is bounded by assumption \eqref{truie2}, it follows from standard compactness results that there exists a subsequence $(u_{\sigma(n)})_{n \in \N}$, and a map $u \in H^1_{\rm loc}(\R)$ such that
$$u_{\sigma(n)} \rightharpoonup u \ {\rm in} \ H^1([- B, B]), \ {\rm as} \ n \to + \infty,$$
for any $B > 0$. It remains to prove that the limiting map $u$ solves \eqref {TWc} on $(- B, B)$. For that purpose, we consider a smooth map $\xi$, with compact support in $(-B, B)$, such that
\begin{equation}
\label{ortho}
\int_{\R} \langle i u, \xi' \rangle = 0.
\end{equation}
We claim that, for any $t$ sufficiently small,
\begin{equation}
\label{claim}
\int_{- B}^B e(u_{\sigma(n)} + t \xi) \geq \int_{- B}^B e(u_{\sigma(n)}) + O(t^2) + \underset{n \to + \infty}{o(1)}.
\end{equation}
To establish the claim, we first expand the momentum $[p](u_n + t \xi)$, using formula \eqref{eq:andouille2} of Lemma \ref{lem:andouille2}. We obtain in $\R / \pi \Z$, 
\begin{align*}
[p](u_n + t \xi) & = [p](u_n) + t \int_{\R} \langle i u_n, \xi' \rangle + O(t^2) \\
& = [p_n] + O(t^2) + \underset{n \to + \infty}{o(1)} = \frac{\pi}{2} + O(t^2) + \underset{n \to + \infty}{o(1)},
\end{align*}
so that, setting $\q_{n, t} = \frac{\pi}{2} - [p](u_n + t \xi)$, we are led to
$$\q_{n, t} = O(t^2) + \underset{n \to + \infty}{o(1)}.$$
We next construct a comparison map $v_{n, t}$ for $\E_{\min}(\frac{\pi}{2})$ applying several modifications to the map $u_n + t \xi$. For that purpose, we invoke Lemma \ref{construct} with $\q = \q_{n, t}$, and $\mu = \mu_{n, t} = \inf \{ \frac{1}{4}, \frac{1}{2}\nu_{n, t} \}$, where $\mu_{n, t} = \sup \{ |1 - |u_n(x)||, x \in [-B, B] \}$. This yields a positive number $\ell_{n, t} > 1$, and a map $w_{n, t} = |w_n| \exp i \psi_n$, defined on $[0, \ell_n(t)]$ such that
$$w_n(0) = w_n(\ell_{n, t}), \ {\rm and} \ \big| 1 - |w_{n, t}(0)| \big| = \mu_{n, t},$$
and such that
$$\q_{n, t} = \frac{1}{2} \int_0^{\ell_{n, t}} |w_{n, t}|^2 \psi_n',$$
and
\begin{equation}
\label{diplodocus}
E(w_{n, t}) \leq 14 |\q_{n ,t}| = O(t^2) + \underset{n \to + \infty}{o(1)}.
\end{equation}
In view of the mean value theorem, there exists some point $x_n$ in $[B, + \infty)$ such that $|u_n(x_n)| = |w_{n, t}(0)|$. Multiplying possibly $w_{n, t}$ by some constant of modulus one, we may therefore assume, without loss of generality, that
$u_n(x_n) = w_n(0)$. We define the comparison map $v_{n, t}$ as follows
\begin{align*}
v_{n, t}(x) & = u_n(x) + t \xi(x), \ \forall x < x_n,\\
v_{n, t}(x) & = w_n(x - x_n), \ \forall x_n \leq x \leq x_n + \ell_{n, t},\\
v_{n, t}(x) & = u_n(x - \ell_{n, t}) + t \xi(x - \ell_{n,t}), \ \forall x \geq x_n + \ell_{n, t}.
\end{align*}
We verify that $v_{n, t}$ belongs to $\boX^1$ and that
$$E(v_{n, t}) = E(u_n + t \xi) + E(w_{n, t}), \ {\rm and} \ [p](v_{n, t}) = [p](u_n + t \xi) + \q_{n, t} = \frac{\pi}{2} \ {\rm mod} \ \pi,$$
so that $v_{n, t}$ is a comparison map for $\E_{\min}(\frac{\pi}{2})$. Therefore we have
\begin{equation}
\label{compa}
E(v_{n, t}) \geq \E_{\min} \Big( \frac{\pi}{2} \Big).
\end{equation}
On the other hand, we have in view of assumption \eqref{truie2},
\begin{equation}
\label{brachiosaurus}
E(u_n) = \E_{\min} \Big( \frac{\pi}{2} \Big) + \underset{n \to + \infty}{o(1)},
\end{equation}
whereas, since $\xi$ has compact support in $(- B, B)$,
\begin{equation}
\label{albertosaurus}
E(u_n + t \xi) - E(u_n) = \int_{- B}^B \Big( e(u_n + t \xi) - e(u_n)\Big).
\end{equation}
Combining \eqref{albertosaurus} with \eqref{brachiosaurus}, \eqref{compa} and \eqref{diplodocus}, we establish claim \eqref{claim}.

To complete the proof of Lemma \ref{onlyweak}, we expand the integral in \eqref{claim} so that
$$t \int_{-B}^B \Big( u_n' \xi' - \xi u_n (1 - |u_n|^2) \Big) \geq O(t^2) + \underset{n \to + \infty}{o(1)}.$$
We then let $n$ tend to $+ \infty$. This yields, in view of the compact embedding of $H^1([-B, B])$ in $C^0([-B, B])$,
$$t \int_{- B}^B \Big( u' \xi'- \xi u(1 - |u|^2) \Big) \geq O(t^2).$$
Letting $t$ tend to $0^+$ and $0^-$, we deduce
$$\int_{- B}^B \Big( u' \xi' - \xi u (1 - |u|^2) \Big) = 0, $$
that is integrating by parts,
$$\int_{\R} \Big( u'' + u (1 - |u|^2) \Big) \xi = 0.$$ 
Since $\xi$ is any arbitrary function with compact support verifying \eqref{ortho}, this shows that there exists some constant $c$ such that $u$ solves \eqref{TWc}. Since any non-constant solution to \eqref{TWc} is of the form $u = \exp i \theta \ \v_c(\cdot + \tilde{x})$ for some $- \sqrt{2} < c < \sqrt{2}$, this yields the conclusion.
\end{proof}

\begin{remark}
Notice that, in the context of Lemma \ref{onlyweak}, we have, as a consequence of lower-semicontinuity and compact embedding theorems,
\begin{equation}
\label{semis}
\int_{- B}^B e(v_c) \leq \underset {n \to + \infty}{\liminf} \int_{- B}^B e(u_{\sigma(n)} ).
\end{equation}
Moreover if $v_c$ has no zero on $[-B, B]$, then this is also the case for $u_{\sigma(n)}$, at least for $n$ sufficiently large, and we may therefore write on $[-B, B]$, $u_{\sigma(n)} = \varrho_{\sigma(n)} \exp i \varphi_{\sigma(n)}$, and $v_c = \varrho_c \exp i \varphi_c$. We then have
\begin{equation}
\label{semis2}
\int_{- B}^B (\varrho_c^2 - 1) \varphi_c' = \underset{n \to + \infty}{\lim} \int_{- B}^B (\varrho_{\sigma(n)}^2 - 1) \varphi_{\sigma(n)}'.
\end{equation}
\end{remark}

It might happen that the limit map provided by Lemma \ref{onlyweak} is a constant of modulus one. To capture the possible losses at infinity, we need to implement a concentration-compactness argument. Assuming first that there exists some positive constant $\delta_0$ such that
$$\underset{x \in \R}{\inf} |u_n(x)| \geq \delta_0,$$
for any $n \in \N$, we are led to
$$|p(u_n)| \leq \frac{E(u_n)}{\sqrt{2} \delta_0},$$
in view of Corollary \ref{coro}. Hence, it follows from \eqref{bacon2} and \eqref{truie2} that, up to some subsequence, there exists some integer $\tilde{k}$ such that
\begin{equation}
\label{plimite}
p(u_n) \to \frac{\pi}{2} + \tilde{k} \pi, \ {\rm as} \ n \to + \infty.
\end{equation}
Setting
$$\delta(u_n) = 1 - \frac{E(u_n)}{\sqrt{2} |p(u_n)|},$$
we deduce from \eqref{truie2} that
$$\delta(u_n) \to \delta_{\frac{\pi}{2}} \equiv 1 - \frac{\E_{\min}(\frac{\pi}{2})}{\sqrt{2} \big| \frac{\pi}{2} + \tilde{k} \pi \big|} \geq 1 - \frac{\sqrt{2} E(\v_0)}{\pi} = 1 - \frac{4}{3\pi} > 0,$$
as $n \to +\infty$. Invoking Lemma \ref{changi}, and Corollary \ref{coro}, we may assert

\begin{prop}
\label{carbonifere}
Let $(u_n)_{n \in \N}$ be a sequence of maps satisfying \eqref{truie2}. There exists an integer $\ell$, depending only on $\delta_{\frac{\pi}{2}}$, and there exist $\ell_n$ points $x_1^n$, $\ldots$, $x_{\ell_n}^n$ satisfying $\ell_n \leq \ell$, such that
$$\big| 1 - |u_n(x_i^n)| \big| \geq \frac{\delta_{\frac{\pi}{2}}}{4}, \ \forall 1 \leq i \leq \ell_n,$$
and
$$\big| 1 - |u_n(x)| \big| \leq \frac{\delta_{\frac{\pi}{2}}}{4}, \ \forall x \in \R \setminus \underset{i = 1}{\overset{\ell_n}{\cup}} \big[ x_i^n - 1, x_i ^n + 1 \big],$$
provided $n$ is sufficiently large. 
\end{prop}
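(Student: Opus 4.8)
The plan is to read the conclusion off directly from Lemma \ref{changi}, applied to each $u_n$ separately, the only point requiring real attention being that the number $\ell$ of concentration intervals can be taken independently of $n$. Throughout I work under the standing assumption $\inf_{x \in \R} |u_n(x)| \geq \delta_0 > 0$ of the present case, under which $\delta_{\frac{\pi}{2}}$ has been defined and shown to satisfy $\delta_{\frac{\pi}{2}} \geq 1 - \frac{4}{3\pi} > 0$ via \eqref{plimite} and Corollary \ref{coro}.

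First I would secure a uniform energy bound. Since $E(u_n) \to \E_{\min}(\frac{\pi}{2})$ by \eqref{truie2}, there is an index $N$ with $E(u_n) \leq E^* \equiv \E_{\min}(\frac{\pi}{2}) + 1$ for every $n \geq N$; the crucial point is that $E^*$ is an absolute constant, not depending on $n$. I then apply Lemma \ref{changi} to the map $v = u_n$ with the fixed choices $E = E^*$ and $\delta_0 = \frac{\delta_{\frac{\pi}{2}}}{4}$, which is admissible since $0 < \frac{\delta_{\frac{\pi}{2}}}{4} < \frac14 < 1$. This produces an integer $\ell \equiv \ell_0(E^*, \frac{\delta_{\frac{\pi}{2}}}{4})$, points $x_1^n, \ldots, x_{\ell_n}^n$ with $\ell_n \leq \ell$, and precisely the two pointwise inequalities asserted in the proposition. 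Because $E^*$ is absolute and $\delta_0$ depends only on $\delta_{\frac{\pi}{2}}$, the resulting bound $\ell$ depends only on $\delta_{\frac{\pi}{2}}$, as required.

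It remains to dispose of the first (trivial) alternative in Lemma \ref{changi}, namely that $\big|1 - |u_n(x)|\big| < \frac{\delta_{\frac{\pi}{2}}}{4}$ for every $x$. Here I would invoke Corollary \ref{coro}: since $u_n \in \tilde{\boX^1}$ and $\delta(u_n) \to \delta_{\frac{\pi}{2}} > 0$, for $n$ large we have $\delta(u_n) > \frac{\delta_{\frac{\pi}{2}}}{2}$, so Corollary \ref{coro} applied with $\delta = \frac{\delta_{\frac{\pi}{2}}}{4} < \delta(u_n)$ yields a point $x_*^n$ with $1 - |u_n(x_*^n)| \geq \frac{\delta_{\frac{\pi}{2}}}{4}$. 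In particular the bad set is non-empty, so the second alternative of Lemma \ref{changi} is the one that occurs and one may take $1 \leq \ell_n \leq \ell$ (should the first alternative nevertheless occur for some $n$, one simply sets $\ell_n = 0$ and the union is empty, which is still consistent with the statement).

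I do not expect a genuine obstacle: the substantive work has already been carried out, in the covering and energy-quantization argument of Lemma \ref{changi} and in the derivation of the strict positivity of $\delta_{\frac{\pi}{2}}$. The only delicate bookkeeping point is the $n$-uniformity of $\ell$, and this is secured precisely by replacing the $n$-dependent energies $E(u_n)$ with the single absolute bound $E^*$ before invoking Lemma \ref{changi}.
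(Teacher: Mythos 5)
Your proposal is correct and matches the paper's own (very terse) justification, which simply reads ``Invoking Lemma~\ref{changi}, and Corollary~\ref{coro}, we may assert'' the proposition: you supply exactly the intended details, namely a uniform energy bound making $\ell_0(E^*, \delta_{\frac{\pi}{2}}/4)$ independent of $n$, and Corollary~\ref{coro} together with $\delta(u_n) \to \delta_{\frac{\pi}{2}} > 0$ to rule out the trivial alternative of Lemma~\ref{changi} for large $n$.
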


Passing possibly to a further subsequence, we may assume that the number $\ell_n$ does not depend on $n$, and set $\ell = \ell_n$. A standard compactness argument shows that, passing again possibly to another subsequence, and relabelling possibly the points $x_i^n$, we may find some integer $1 \leq \tilde{\ell} \leq \ell$, and some number $R > 0$ such that
\begin{equation}
\label{trias}
|x_i^n - x_j^n| \to + \infty, \ {\rm as} \ n \to + \infty, \ \forall 1 \leq i \neq j \leq \tilde{\ell},
\end{equation}
and
$$x_j^n \in \underset{i = 1}{\overset{\tilde{\ell}}{\cup}} (x_i^n - R, x_i^n + R), \ \forall \tilde{\ell} < j \leq \ell.$$
Going back to Proposition \ref{carbonifere}, we deduce
\begin{equation}
\label{megalodon2}
\big| 1 - |u_n(x)| \big| \leq \frac{\delta_{\frac{\pi}{2}}}{4}, \ \forall x \in \R \setminus \underset{i = 1}{\overset{\tilde{\ell}}{\cup}} (x_i^n - R - 1, x_i^n + R + 1),
\end{equation}
so that, invoking Lemma \ref{colisee}, we have on $\R \setminus \underset{i = 1}{\overset{\tilde{\ell}}{\cup}} (x_i^n - R - 1, x_i^n + R + 1)$,
\begin{equation}
\label{megalodon3}
\frac{1}{2} \Big| (|u_n| ^2 - 1) \varphi_n' \Big| \leq \frac{e(u_n)}{\sqrt{2} \Big( 1 - \frac{\delta_\frac{\pi}{2}}{4} \Big)}.
\end{equation}

We are now in position to provide the proof to Theorem \ref{laye}.

\begin{proof}[Proof of Theorem \ref{laye}]
Since the aim of Theorem \ref{laye} is to provide a subsequence, we may extract subsequences as many times we wish. In our notation, we will not distinguish the subsequence from the original one and will still denote it $(u_n)_{n \in \N}$. We claim

\begin{claim}
\label{grasdouble}
There exists a subsequence $(u_n)_{n \in \N}$ such that $\underset{x \in \R}{\inf} |u_n(x)|$ tends to $0$, as $n \to +\infty$, that is
$$\exists a_n \in \R \ {\rm s.t.} \ u_n(a_n) \to 0, \ {\rm as} \ n\to + \infty.$$
\end{claim}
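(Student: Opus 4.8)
The plan is to argue by contradiction, taking full advantage of the concentration-compactness decomposition prepared in Proposition \ref{carbonifere}. Suppose Claim \ref{grasdouble} were false. Then $0$ is not a subsequential limit of $\inf_{x \in \R} |u_n(x)|$, so that $\inf_{x \in \R} |u_n(x)| \geq \delta_0$ for some $\delta_0 > 0$ and all $n$ large. Each $u_n$ then belongs to $\tilde{\boX^1}$, and we are exactly in the situation preceding the claim: by Corollary \ref{coro} the renormalized momenta $p(u_n)$ are well defined and uniformly bounded, by \eqref{bacon2} and \eqref{truie2} we have $p(u_n) \to \frac{\pi}{2} + \tilde{k} \pi$ as in \eqref{plimite}, the defect $\delta(u_n)$ tends to $\delta_{\frac{\pi}{2}} > 0$, and Proposition \ref{carbonifere} together with the clustering \eqref{trias}--\eqref{megalodon3} is available.

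First I would perform the bubbling. Fixing a large $B$, I apply Lemma \ref{onlyweak} to each translated sequence $u_n(\cdot + x_i^n)$, $1 \leq i \leq \tilde{\ell}$ (whose energy and untwisted momentum coincide with those of $u_n$), and extract along a common subsequence a weak $H^1([-B,B])$ limit solving \eqref{TWc}. Since $|u_n| \geq \delta_0$ everywhere, each limit has modulus at least $\delta_0$, and since $\big| 1 - |u_n(x_i^n)| \big| \geq \frac{\delta_{\frac{\pi}{2}}}{4}$ by Proposition \ref{carbonifere} (passing to the limit through the compact embedding $H^1 \hookrightarrow C^0$), each limit is a genuinely non-constant, non-vanishing travelling wave $\exp i \theta_i \, \v_{c_i}(\cdot + a_i)$ with $c_i \neq 0$. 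In particular $|p(\v_{c_i})| \in (0, \frac{\pi}{2})$ and, by Lemma \ref{tildeemin}, $E(\v_{c_i}) = E_{\min}(|p(\v_{c_i})|)$.

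Next I would split the energy and the renormalized momentum across the windows $W_i^n = (x_i^n - B, x_i^n + B)$, pairwise disjoint for $n$ large by \eqref{trias}, and the complementary tail $T_n$. Lower semicontinuity \eqref{semis} gives $\liminf_n \int_{W_i^n} e(u_n) \geq \int_{-B}^B e(\v_{c_i})$, so that, letting $B \to +\infty$, the bubbles satisfy $\sum_i E(\v_{c_i}) + \eta = \lim_n E(u_n) = \E_{\min}(\frac{\pi}{2})$ for some residual tail energy $\eta \geq 0$. Because each $u_n$ is globally non-vanishing, $p(u_n) = \frac12 \int_\R (\varrho_n^2 - 1) \varphi_n'$ is an honest integral that splits additively; on each window, \eqref{semis2} and the exponential decay of $\v_{c_i}$ give $\frac12 \int_{W_i^n} (\varrho_n^2 - 1) \varphi_n' \to p(\v_{c_i})$ after letting $n \to +\infty$ then $B \to +\infty$, while \eqref{megalodon3} bounds the tail contribution by $\kappa^{-1} \int_{T_n} e(u_n)$ with $\kappa = \sqrt{2}(1 - \frac{\delta_{\frac{\pi}{2}}}{4})$. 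Passing to the limit yields $\frac{\pi}{2} + \tilde{k} \pi = \sum_i p(\v_{c_i}) + \tau$ with $|\tau| \leq \eta / \kappa$.

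The contradiction then follows from the energy-momentum curve. Writing $S = \sum_i |p(\v_{c_i})|$, the relation above forces $S + |\tau| \geq \big| \sum_i p(\v_{c_i}) + \tau \big| \geq \frac{\pi}{2}$. Since $E_{\min}$ is strictly concave on $[0, \frac{\pi}{2})$ with $E_{\min}(0) = 0$ and $E_{\min}(\p) \to E(\v_0) = \frac{2\sqrt{2}}{3}$ as $\p \to \frac{\pi}{2}^-$, the ratio $E_{\min}(\p)/\p$ is strictly decreasing with limit $\frac{4\sqrt{2}}{3\pi}$, so $E_{\min}(|p(\v_{c_i})|) > \frac{4\sqrt{2}}{3\pi} |p(\v_{c_i})|$ for each $i$. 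Combining this with $\eta \geq \kappa |\tau| \geq \frac{4\sqrt{2}}{3\pi} |\tau|$ (note $\kappa > \frac{4\sqrt{2}}{3\pi}$), I obtain
$$\E_{\min} \Big( \frac{\pi}{2} \Big) = \sum_i E(\v_{c_i}) + \eta > \frac{4\sqrt{2}}{3\pi} \big( S + |\tau| \big) \geq \frac{4\sqrt{2}}{3\pi} \cdot \frac{\pi}{2} = \frac{2\sqrt{2}}{3},$$
which contradicts $\E_{\min}(\frac{\pi}{2}) \leq E(\v_0) = \frac{2\sqrt{2}}{3}$, valid since $\v_0$ is an admissible competitor for \eqref{porcelet}. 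Hence the claim holds. The main obstacle is the third paragraph: rigorously justifying the simultaneous additive splitting of both the energy and the renormalized momentum across the well-separated bubbles and the tail, that is, upgrading the local weak convergence of Lemma \ref{onlyweak} to control of these global quantities. It is precisely the non-vanishing assumption that turns $p(u_n)$ into a genuine integral (rather than an improper or mod-$\pi$ one) and thereby makes this bookkeeping possible.
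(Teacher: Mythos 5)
Your proposal is correct and follows essentially the same route as the paper: assume a uniform lower bound $\delta_0$ on $|u_n|$, bubble off non-constant, non-vanishing travelling waves $\v_{c_i}$ at the points of Proposition \ref{carbonifere} via Lemma \ref{onlyweak}, split energy and renormalized momentum into windows plus a tail using \eqref{semis}, \eqref{semis2} and \eqref{megalodon3}, and contradict the strict concavity of $E_{\min}$. The only deviations are cosmetic: you compare against the absolute slope $\frac{4\sqrt{2}}{3\pi}$ together with the bound $\E_{\min}(\frac{\pi}{2}) \leq E(\v_0)$ where the paper normalizes by $\frac{2}{\pi}\E_{\min}(\frac{\pi}{2})$, and you let the window size $B \to +\infty$ directly where the paper organizes the same limit through its $\mu$--$A_\mu$ bookkeeping in Steps \ref{stepdeux} and \ref{steptrois}.
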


In this case, we apply Lemma \ref{onlyweak} to the sequence $u_n(\cdot + a_n)_{n \in \N}$ . This shows, that given any arbitrary number $B > A$, there exists a subsequence $(u_n)_{n \in \N}$ and a solution $v_c$ to \eqref{TWc} such that
\begin{equation}
\label{conv10}
u_n(\cdot + a_n) \rightharpoonup v_c \ {\rm in} \ H^1([- B, B]), \ {\rm as} \ n \to + \infty.
\end{equation}
In particular, by compact embedding theorem, the convergence is uniform on the interval $[-B, B]$. It follows therefore from Claim \ref{grasdouble} that
$$v_c(0) = 0.$$
Hence, $c = 0$, since $\v_0$ is the only travelling wave which has a vanishing point, and there exist some number $\theta$ such that $v_c = \exp i \theta \ \v_0$. We have therefore, in view of \eqref{semis},
 \begin{equation}
\label{coppa}
\int_{- B}^B e(\v_0) \leq \underset {n \to + \infty}{\liminf} \int_{- B + a_n}^{B + a_n} e(u_{n} )\leq E(\v_0),
\end{equation}
for any $B > 0$. Given any small $\varepsilon>0$, we choose $B = B_\varepsilon$ so that
$$\int_{|x| > B_\varepsilon} e(\v_0) \leq \frac{\varepsilon}{2},$$
and $n_\varepsilon$ such that
$$E(u_n) \leq E(\v_0) + \frac{\varepsilon}{2},$$
for any $n \geq n_\varepsilon$. Combining with \eqref{coppa}, we obtain
$$\int_{|x - a_n| > B_\varepsilon} e(u_n) \leq \varepsilon,$$
for $n \geq n_\varepsilon$, that is
$$\int_{|x - a_n| > B_\varepsilon} \Big( (u_n')^2+ (1 - |u_n|^2)^2 \Big) \leq 4\varepsilon.$$
Combining with \eqref{conv10}, the conclusion follows.
\end{proof}

\begin{proof}[Proof of Claim \ref{grasdouble}]
Assume by contradiction that there exists $\delta_0 > 0$, such that up to a subsequence $(u_n)_{n \in \N}$, we have
$$\inf_{x \in \R} |u_n(x)| \geq \delta_0,$$
for any $n \in \N$. In this case, we may write $u_n = \varrho_n \exp i \varphi_n$, and we may apply Proposition \ref{carbonifere} to the sequence $(u_n)_{n \in \N}$, so that we may assume that it satisfies \eqref{trias}, \eqref{megalodon2} and \eqref{megalodon3}.

We then divide the proof into several steps.

\begin{step}
\label{stepun}
Given any $1 \leq i \leq \tilde{\ell}$, there exists some numbers $c_i \in (- \sqrt{2}, \sqrt{2}) \setminus \{ 0 \}$, $\tilde{x_i}$ and $\theta_i$ such that
$$u_n(\cdot + x_i^n) \rightharpoonup \exp i \theta_i \ \v_{c_i} (\cdot + \tilde{x_i}) \ {\rm in} \ H^1_{\rm loc}(\R), \ {\rm as} \ n \to + \infty.$$
\end{step}

Applying Lemma \ref{onlyweak} to the sequence $u_n(\cdot + x_i^n)_{n \in \N}$ yields the existence of the limiting solution $v_{c_i}$ to \eqref{TWc}. It remains to prove that the function $v_{c_i}$ is neither a constant function, nor the
kink $\v_0$. This is a consequence of the fact that
$$|u_n(x_i^n)| \leq 1 - \frac{\delta_\frac{\pi}{2}}{4}, \ {\rm and} \ |u_n(x)|\geq \delta_0, \ \forall x \in \R,$$
so that, since we have uniform convergence on compact sets, we obtain
$$|v_{c_i}(0)| \leq 1 - \frac{\delta_\frac{\pi}{2}}{4}, \ {\rm and} \ |v_{c_i}(x)|\geq \delta_0, \ \forall x \in \R.$$

\begin{step}
\label{stepdeux}
Given any number $\mu > 0$, there exist a number $A_\mu > 0$, and $n_\mu \in \N$, such that, if $n \geq n_\mu$, then
$$\int_{\underset{i = 1}{\overset{\tilde{\ell}}{\cup}} (x_i^n - A_\mu, x_i^n + A_\mu)} e(u_n) \geq \underset{i = 1}{\overset{\tilde{\ell}}{\sum}} E(\v_{c_i}) - \mu,$$
and
$$\bigg| \frac{1}{2} \int_{\underset{i = 1}{\overset{\tilde{\ell}}{\cup}} (x_i^n - A_\mu, x_i^n + A_\mu)} (\varrho_n^2 - 1) \varphi_n' - \underset{i = 1}{\overset{\tilde{\ell}}{\sum}} \p_i \bigg| \leq \mu,$$
where $\p_i = p(\v_{c_i})$.
\end{step}

To prove Step \ref{stepdeux}, we choose $A_\mu > R + 1$ so that, for any $1 \leq i \leq \tilde{\ell}$, we have
$$\int_{- A_\mu}^{A_\mu} e(\v_{c_i}) \geq E(\v_{c_i}) - \frac{\mu}{2 \tilde{\ell}},$$
and
$$\frac{1}{2} \bigg| \int_{- A_\mu}^{A_\mu} \Big( (|\v_{c_i}|^2 - 1) \varphi_{c_i}' \Big) - \p_i \bigg| \leq \frac{\mu}{2 \tilde{\ell}}.$$
The conclusion follows from the convergences stated in \eqref{semis} and \eqref{semis2}.

\begin{step}
\label{steptrois}
We have
$$\bigg| \frac{1}{2} \int_{\R \setminus \underset{i = 1}{\overset{\tilde{\ell}}{\cup}} (x_i^n - A_\mu, x_i^n + A_\mu)} (\varrho_n^2 - 1) \varphi_n' \bigg| \leq \frac{1}{\sqrt{2} \Big( 1 - \frac{\delta_\frac{\pi}{2}}{4} \Big)} \int_{\R \setminus \underset{i = 1}{\overset{\tilde{\ell}}{\cup}} (x_i^n - A_\mu, x_i^n + A_\mu)} e(u_n).$$
\end{step}

To establish this inequality, it is sufficient to integrate \eqref{megalodon3}.

Passing possibly to a further subsequence, we may assume that there exist some numbers $\p_\mu$ and $E_\mu$ such that
$$\frac{1}{2} \int_{\R \setminus \underset{i = 1}{\overset{\tilde{\ell}}{\cup}} (x_i^n - A_\mu, x_i^n + A_\mu)} (\varrho_n^2 - 1) \varphi_n' \to \p_{\mu}, \ {\rm and} \ \int_{\R \setminus \underset{i = 1}{\overset{\tilde{\ell}}{\cup}} (x_i^n - A_\mu, x_i^n + A_\mu)} e(u_n) \to E_\mu,$$
as $n \to + \infty$, so that Step \ref{steptrois} yields
$$\sqrt{2} \Big( 1 - \frac{\delta_\frac{\pi}{2}}{4} \Big) |\p_\mu| \leq E_\mu.$$
On the other hand, we have by \eqref{plimite},
$$p(u_n) \to \frac{\pi}{2} + \tilde{k} \pi, \ {\rm as} \ n \to + \infty.$$
Going back to Step \ref{stepdeux}, and letting $n \to + \infty$, we are led to the estimates
$$\bigg| \frac{\pi}{2} + \tilde{k} \pi - \underset{i = 1}{\overset{\tilde{\ell}}{\sum}} \p_i - \p_\mu \bigg| \leq \mu, \ {\rm and } \ \E_{\min} \Big( \frac{\pi}{2} \Big) \geq \underset{i = 1}{\overset{\tilde{\ell}}{\sum}} E_{\min}(\p_i) + E_\mu - \mu.$$
Letting $\mu \to 0$, we may assume that for some subsequence $(\mu_m)_{m \in \N}$ tending to $0$, we have
$$\p_{\mu_m} \to \tilde{\p}, \ {\rm and} \ E_{\mu_m} \to \tilde{E}, \ {\rm as} \ {m \to + \infty}.$$
Our previous inequalities then yield
\begin{equation}
\label{ouida}
\begin{split}
\frac{\pi}{2} + \tilde{k} \pi & = \underset{i = 1}{\overset{\tilde{\ell}}{\sum}} \p_i + \tilde{\p},\\
\E_{\min} \Big( \frac{\pi}{2} \Big) & \geq \underset{i = 1}{\overset{\tilde{\ell}}{\sum}} E_{\min}(\p_i) + \tilde{E},
\end{split}
\end{equation}
with
\begin{equation}
\label{nenni}
\sqrt{2} \Big( 1- \frac{\delta_\frac{\pi}{2}}{4} \Big) |\tilde{\p}| \leq \tilde{E}.
\end{equation}

\begin{step}
\label{stepquatre}
The contradiction.
\end{step}

Notice first that
$$E(\v_0) = \frac{2 \sqrt{2}}{3} \geq \E_{\min} \Big( \frac{\pi}{2} \Big),$$
so that it follows from the strict concavity of the curve $\p \mapsto E_{\min}(\p)$ that
$$E_{\min}(\p_i) > \frac{4\sqrt{2}}{3\pi} |\p_i| \geq \frac{2 \E_{\min}(\frac{\pi}{2})}{\pi} |\p_i|,$$
for any $1 \leq i \leq \tilde{\ell}$. Next, by \eqref{nenni},
$$\tilde{E} \geq \sqrt{2} \Big( 1 - \frac{\delta_\frac{\pi}{2}}{4} \Big) |\tilde{\p}| \geq \sqrt{2} \Big( 1 - \delta_\frac{\pi}{2} \Big) |\tilde \p| \geq \frac{2 \E_{\min}(\frac{\pi}{2})}{\pi} |\tilde{\p}|,$$
where the second inequality is strict unless $\tilde{\p} = 0$. By summation, we therefore obtain in view of \eqref{ouida},
$$\E_{\min} \Big( \frac{\pi}{2} \Big) \geq \underset{i = 1}{\overset{\tilde{\ell}}{\sum}} E_{\min}(\p_i) + \tilde{E} > \frac{2 \E_{\min}(\frac{\pi}{2})}{\pi} \bigg( \sum_{i = 1}^{\tilde{\ell}} |\p_i| + |\tilde{\p}| \bigg) \geq \E_{\min} \Big( \frac{\pi}{2} \Big) \frac{2 |\frac{\pi}{2} + \tilde{k} \pi|}{\pi} \geq \E_{\min} \Big( \frac{\pi}{2} \Big),$$
which yields the desired contradiction.
\end{proof}

%%%%%%%%%%%%%%%%%%%%%%%%%%%%%%%%%%%%%%%%%%
%%%%%%%%%%%%%%%%%%%%%%%%%%%%%%%%%%%%%%%%%%
\section{Conservation laws for \eqref{GP}}
\label{sect:4}
%%%%%%%%%%%%%%%%%%%%%%%%%%%%%%%%%%%%%%%%%%
%%%%%%%%%%%%%%%%%%%%%%%%%%%%%%%%%%%%%%%%%%

The purpose of this section is to prove Proposition \ref{grouic} as well as a localised version of \eqref{interP}.

We first recall (see e.g. \cite{Gerard1,Gallo1,Gerard2}) that whenever the initial datum $v_0$ belongs to the space
$$\boX^2 = \Big\{ w \in L^\infty(\R), \ {\rm s.t.} \ w' \in H^1(\R) \ {\rm and } \ 1 - |w|^2 \in L^2(\R) \Big\},$$
the unique global solution $v$ provided by Theorem \ref{cauchy} is in $\boC^0(\R, \boX^2)$, and moreover,
\begin{equation}
\label{eq:regu}
t \mapsto v(t) - v_0 \in \boC^0(\R,H^2(\R)).
\end{equation}

\begin{proof}[Proof of Proposition \ref{grouic}]
Let $v_0 \in \boX^1$ be given, and assume first that $v_0 \in \boX^2$. We write
$$v(t) = v_0 + w(t),$$
so that $w \in \boC^0(\R, H^2(\R))$ by \eqref{eq:regu}, and satisfies
\begin{equation}
\label{eq:perturb}
i w_t + w_{xx} = - {v_0}_{xx} + (w + v_0) (|w + v_0|^2 - 1).
\end{equation}
By Lemma \ref{lem:andouille2}, we have
\begin{equation}
\label{eq:morteau}
[p](v(t)) = [p](v_0) + \frac{1}{2}\int_{\R} \langle i w(t), w_x(t) \rangle + \int_{\R} \langle i w(t), {v_0}_x \rangle \ {\rm mod} \ \pi,
\end{equation}
for any $t \in \R$. It follows from \eqref{eq:perturb} and Sobolev embedding theorem that $w \in \boC^1(\R, L^2(\R))$. Since $w$ also belongs to $\boC^0(\R, H^1(\R))$, both of the integrals in \eqref{eq:morteau} are differentiable with respect to $t$ on $\R$ and we have integrating by parts,
\begin{align*}
\frac{d}{dt} \Big( [p](v(t)) \Big)_{|_{t = s}} = \int_{\R} \langle i v_t(s), v_x(s) \rangle & = \int_{\R} \langle - v_{xx}(s) + v(s) \big(|v(s)|^2 - 1 \big), v_x(s) \rangle\\
& = \int_{\R} \partial_x \Big( - \frac{1}{2} (v_x(s))^2 + \frac{1}{4} (1 - |v(s)|^2)^2 \Big) = 0.
\end{align*}

In case $v_0 \in \boX^1 \setminus \boX^2$, we approximate $v_0$ by a sequence $(v_{0,n})_{n \in \N}$ in $\boX^2(\R)$ (e.g. by mollification) for the $H^1$-norm, and use the continuity of the flow map $w(0) \mapsto w(t)$ for \eqref{eq:perturb} from $H^1(\R)$ to $\boC^0([-T,T], H^1(\R))$ for any fixed $T > 0$ (see \cite{Gerard1,Gallo1,Gerard2}), and the continuity of $[p]$ for the $H^1$-norm.

If $v_0 \in \boZ^1\cap \boX^2$, it follows from the embedding of $H^2(\R)$ into $\boC_0^0(\R)$, that $v(t) \in \boZ^1\cap \boX^2$ for any $t \in \R$. In view of Lemma \ref{andouille}, it then suffices to replace \eqref{eq:morteau} by
$$\boP(v(t)) = \boP(v_0) + \frac{1}{2} \int_{\R} \langle i w(t), w_x(t) \rangle + \int_{\R} \langle i w(t), {v_0}_x\rangle,$$
and to repeat the argument above. When $v_0 \in \boZ^1,$ one also argues by approximation.
\end{proof}

The following is a (rigorous) localised version of the evolution law for the center of mass.

\begin{prop}
\label{prop:centredemasse}
Let $\chi \in \boC_c^\infty(\R)$ be given and let $v_0 \in \boX^1$. We denote by $v$ the solution to
\eqref{GP} with initial datum $v_0.$ Then,
\begin{equation}\label{eq:evolbari}
\frac{d}{dt} \int_\R x \big( |v(x)|^2 - 1 \big) \chi(x) dx = 2 \int_\R \langle i v(x), v_x(x) \rangle (x \chi(x))_x dx, \ \forall t \in \R.
\end{equation}
\end{prop}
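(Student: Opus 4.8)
The plan is to follow the same regularize-then-approximate scheme used in the proof of Proposition \ref{grouic}. The heart of the matter is a short pointwise-in-$x$ identity. Using equation \eqref{GP} in the form $v_t = i v_{xx} - i v(|v|^2 - 1)$, one computes
$$\frac{d}{dt} |v|^2 = 2 \langle v_t, v \rangle = 2 \langle i v_{xx}, v \rangle - 2(|v|^2-1)\langle iv, v\rangle.$$
Since $\langle iv, v \rangle = 0$ and $\langle i v_{xx}, v \rangle = -\langle iv, v_{xx}\rangle = -\partial_x \langle iv, v_x\rangle$ (the last equality because $\langle iv_x, v_x\rangle = 0$), this collapses to
$$\frac{d}{dt} |v|^2 = -2 \partial_x \langle iv, v_x\rangle.$$
Multiplying by $x\chi(x)$, integrating over $\R$, and integrating by parts — the boundary terms vanishing since $x\chi$ has compact support — then yields exactly \eqref{eq:evolbari}.

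First I would justify these manipulations rigorously when $v_0 \in \boX^2$. In that case $v \in \boC^0(\R, \boX^2)$ and $w = v - v_0 \in \boC^0(\R, H^2(\R)) \cap \boC^1(\R, L^2(\R))$, as recalled before the proof of Proposition \ref{grouic}. Writing $v = v_0 + w$ and using that $x\chi$ is compactly supported, the integral $\int_\R x\chi(|v|^2-1)\,dx$ differs from a constant by $\int_\R x\chi(2\langle v_0, w\rangle + |w|^2)\,dx$, which is differentiable in $t$ with derivative $2\int_\R x\chi \langle v_t, v\rangle\,dx$; here I use $w_t \in \boC^0(\R, L^2)$, $v \in \boC^0(\R, L^\infty_{\rm loc})$, and the compactness of the support of $x\chi$. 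On this support all of $v_{xx}$, $v(|v|^2-1)$ and $\langle iv, v_x\rangle$ lie in $L^2$, so the computation above and the integration by parts are legitimate.

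For general $v_0 \in \boX^1$, I would pass to the integrated-in-time form of \eqref{eq:evolbari}, namely
$$\int_\R x(|v(t)|^2-1)\chi\,dx - \int_\R x(|v_0|^2-1)\chi\,dx = 2\int_0^t \int_\R \langle iv(s), v_x(s)\rangle (x\chi)_x\,dx\,ds,$$
which is more robust under approximation. Approximating $v_0$ by maps $v_{0,n} \in \boX^2$ as in the proof of Proposition \ref{grouic}, and invoking the continuity of the flow map from $H^1(\R)$ to $\boC^0([-T,T], H^1(\R))$, I obtain $v_n \to v$ in $\boC^0([-T,T], H^1_{\rm loc})$, hence, on the compact support of $x\chi$, $|v_n|^2 \to |v|^2$ in $L^\infty_{\rm loc}$ and $\langle iv_n, (v_n)_x\rangle \to \langle iv, v_x\rangle$ in $L^1_{\rm loc}$. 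Both sides of the integrated identity therefore pass to the limit. Finally, since $t \mapsto \int_\R \langle iv(t), v_x(t)\rangle (x\chi)_x\,dx$ is continuous (again because $v - v_0 \in \boC^0(\R, H^1)$), differentiating recovers \eqref{eq:evolbari}.

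The main obstacle is not the algebra, which is elementary, but the bookkeeping in the approximation step: ensuring that every quantity appearing — in particular the product $\langle iv, v_x\rangle$, which is only locally in $L^1$ a priori — converges in the appropriate topology on the compact support of $x\chi$, and that the time-integrated identity can legitimately be differentiated back to the pointwise-in-$t$ statement.
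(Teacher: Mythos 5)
Your proof is correct, but it takes a genuinely different route from the paper's. The algebraic core --- $\partial_t |v|^2 = -2\,\partial_x \langle i v, v_x\rangle$, then multiplication by $x\chi$ and one integration by parts --- is necessarily the same; the difference is in how the formal computation is justified. The paper does not regularize at all: it observes that any $\boX^1$ solution already satisfies $v \in \boC^1(\R, H^{-1}_{\rm loc}(\R)) \cap \boC^0(\R, H^{1}_{\rm loc}(\R))$ (the first inclusion coming from the equation itself, since $v_{xx} \in \boC^0(\R, H^{-1}_{\rm loc})$ and $v(1-|v|^2) \in \boC^0(\R, L^2_{\rm loc})$), and then differentiates under the integral sign directly, the quantity $\int_\R x \chi \langle v, v_t\rangle$ being understood as a duality pairing between $v_t(t) \in H^{-1}_{\rm loc}$ and the compactly supported $H^1$ function $x\chi\, v(t)$; the entire proof is then the three-line computation you also perform. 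Your two-stage scheme --- a classical computation for $\boX^2$ data, where $v_t \in \boC^0(\R, L^2(\R))$ makes every step an honest $L^2$ identity, followed by passage to the limit in the time-integrated identity via the $H^1$-continuity of the flow map for \eqref{eq:perturb}, and differentiation back in time --- is longer, but it buys two things: it reuses exactly the approximation machinery the paper already sets up for Proposition \ref{grouic}, so no new analytic tool is needed, and it avoids the implicit chain-rule lemma for $H^{-1}$--$H^1$ duality pairings (of Lions--Magenes type) on which the paper's terser "differentiate under the integral sign" rests. You also correctly isolate the one delicate point of the approximation, namely the convergence of $\langle i v_n, (v_n)_x\rangle$ in $L^1$ of the support of $x\chi$, uniformly in time, which indeed follows from $v_n - v \to 0$ in $\boC^0([-T,T], H^1(\R))$ together with the one-dimensional Sobolev embedding.
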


\begin{proof}
Since $v \in \boC^1(\R, H^{- 1}_{\rm loc}(\R)) \cap \boC^0(\R, H^1_{\rm loc}(\R))$, we may
differentiate under the integral sign, which yields
\begin{align*}
\frac{d}{dt} \int_\R x \big( |v(x)|^2 - 1 \big) \chi(x) dx & = 2 \int_\R x \langle v(x), \partial_t v(x) \rangle \chi(x) dx\\
& = 2 \int_\R x \langle v(x), i v_{xx}(x) + i v (x) \big( 1 - |v(x)|^2 \big) \rangle \chi(x) dx\\
& = 2 \int_\R \langle i v(x), v_x(x) \rangle (x \chi(x))_x dx.
\end{align*}
\end{proof}

Formula \eqref{eq:evolbari} is particularly interesting when $\chi$ is an approximation of unity since, if one could take $\chi \equiv 1$, the right-hand side of \eqref{eq:evolbari} would represent $4 P(v)$.

%%%%%%%%%%%%%%%%%%%%%%%%%%%%%%%%%%%%%%%%%%%%%%%%%%%%%%%%%%%%%%%%%%%%%%%%
%%%%%%%%%%%%%%%%%%%%%%%%%%%%%%%%%%%%%%%%%%%%%%%%%%%%%%%%%%%%%%%%%%%%%%%%
\section{Proofs of Theorem \ref{stationnaire} and Theorem \ref{vitesse}}
\label{sect:5}
%%%%%%%%%%%%%%%%%%%%%%%%%%%%%%%%%%%%%%%%%%%%%%%%%%%%%%%%%%%%%%%%%%%%%%%%
%%%%%%%%%%%%%%%%%%%%%%%%%%%%%%%%%%%%%%%%%%%%%%%%%%%%%%%%%%%%%%%%%%%%%%%%

\begin{proof}[Proof of Theorem \ref{stationnaire}]
By contradiction, assume that there exist some numbers $\varepsilon > 0$ and
$A > 0$, a sequence $(v_{0, n})_{n \in \N}$ in $\boX^1$ verifying
\begin{equation}
\label{eq:approxi}
d_{A, \boX^1}(v_{0, n}, \v_0) \to 0, \ {\rm as} \ n \to + \infty,
\end{equation}
and a sequence of times $(t_n)_{n \in \N}$ such that
\begin{equation}
\label{eq:pasproche}
\inf_{(a, \theta) \in \R^2} d_{A,\boX^1} (v_n(\cdot + a, t_n), \exp i \theta \ \v_0(\cdot)) \geq \varepsilon,
\end{equation}
where $v_n$ is the solution to \eqref{GP} with initial datum $v_{0, n}$. It follows from \eqref{eq:approxi} and the continuity of $E$ and $[p]$ with respect to $d_{A, \boX^1}$
(see Lemma \ref{lem:lip}) that
$$[p](v_{0, n}) \to [p](\v_0) = \frac{\pi}{2}, \ {\rm and} \ E(v_{0, n}) \to E(\v_0) =
\E_{\rm min} \Big( \frac{\pi}{2} \Big), \ {\rm as} \ n \to + \infty.$$
Since $E$ and $[p]$ are conserved by the flow, we infer that
$$[p] \big( v_n(\cdot, t_n) \big) \to [p](\v_0) = \frac{\pi}{2}, \ {\rm and} \ E \big( v_n(\cdot, t_n) \big) \to E(\v_0) = \E_{\rm min} \Big( \frac{\pi}{2} \Big),$$
as $n \to + \infty$. Applying Theorem \ref{laye} to the sequence $(v_n(\cdot, t_n))_{n \in \N}$, this yields a contradiction to \eqref{eq:pasproche}.
\end{proof}

\begin{proof}[Proof of Theorem \ref{vitesse}]
Using the invariances of \eqref{GP}, we claim that it is sufficient to prove

\begin{claim}
\label{shift1}
Given any $\varepsilon > 0$ and $A > 0$, there exists some constant $K$, only depending on $A$, and some positive number $\delta > 0$ such that, if $v_0$ and $v$ are as in Theorem \ref{stationnaire} and if \eqref{pioneer} holds, then
$$|a(t)| \leq K \varepsilon,$$
for any $t \in [0, 1]$, and for any of the points $a(t)$ satisfying inequality \eqref{solaris} for some $\theta(t)\in \R$.
\end{claim}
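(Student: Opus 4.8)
The plan is to deduce Theorem \ref{vitesse} from Claim \ref{shift1} by iterating over unit time intervals, and to prove Claim \ref{shift1} itself through the localised center of mass identity \eqref{eq:evolbari}. For the reduction, I would use that \eqref{GP} is invariant under translations in time and space, under multiplication by a phase, and---through complex conjugation---under time reversal, so that it suffices to treat $t \geq 0$. For each integer $n \geq 0$, Theorem \ref{stationnaire} produces a shift $a(n)$ and a phase $\theta(n)$ with $d_{A, \boX^1}(v(\cdot + a(n), n), \exp i\theta(n)\, \v_0) < \varepsilon$. The map $s \mapsto \exp(-i\theta(n))\, v(\cdot + a(n), n + s)$ then solves \eqref{GP} with initial datum within $\varepsilon$ of $\v_0$, so Claim \ref{shift1} (whose hypothesis is met since $\varepsilon$ is small) controls its shift on $[0, 1]$ by $K\varepsilon$; translating back, some admissible shift of $v$ on $[n, n+1]$ lies within $K\varepsilon$ of $a(n)$. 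A short preliminary observation---that any two admissible shifts at a fixed time differ by $O(\varepsilon)$, which follows from the exponential localisation of $1 - |\v_0|^2$ and the nondegeneracy of the unique zero of $\v_0$---guarantees that this bounds all admissible $a(t)$. Summing the increments $|a(n + 1) - a(n)| \leq K\varepsilon$ starting from $|a(0)| \leq K\varepsilon$ yields $|a(t)| \leq K\varepsilon(1 + |t|)$, which is \eqref{shift}.

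To prove Claim \ref{shift1}, I would fix a smooth even cutoff $\chi$, equal to $1$ near the origin and compactly supported, and monitor the localised center of mass $M(t) = \int_\R x(|v(x, t)|^2 - 1)\chi(x)\, dx$ appearing in \eqref{eq:evolbari}. The first step is to read off the shift from $M$. Since $1 - |\v_0|^2 = 1/\ch^2(\cdot/\sqrt{2})$ is even, exponentially localised, and satisfies $\int_\R (1 - |\v_0|^2) = 2\sqrt{2}$, the closeness \eqref{solaris}---which controls $|v| - |\v_0|$ in $L^2(\R)$ globally---gives, as long as $|a(t)|$ remains bounded, the identity $M(t) = -2\sqrt{2}\, a(t) + O(\varepsilon)$, and hence $|a(t)| \leq C(|M(t)| + \varepsilon)$.

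The second step is to bound the velocity of $M$. By Proposition \ref{prop:centredemasse}, $M'(t) = 2\int_\R \langle i v, v_x \rangle (x\chi)_x$. The decisive point is that the momentum density of the kink vanishes, $\langle i \v_0, \v_0' \rangle \equiv 0$, because $\v_0$ is real (and the bracket is unchanged by multiplication by a phase); thus the leading contribution of the profile to this integral is zero. Writing $v = \exp i\theta\, \v_0(\cdot - a) + w$ and expanding, the surviving terms are linear or quadratic in $w$ and $w_x$, and I would estimate them using $\|w_x\|_{L^2(\R)} \leq \varepsilon$, the $L^\infty$ control of $w$ on the window $[-A, A]$, and the exponential decay of $\v_0'$ on the support of $\chi'$, to obtain $|M'(t)| \leq C\varepsilon$. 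Integrating over $[0, 1]$ and using $|M(0)| = O(\varepsilon)$ (since $a(0)$ is itself $O(\varepsilon)$ by \eqref{pioneer}) gives $|M(t)| \leq K\varepsilon$, and therefore $|a(t)| \leq K\varepsilon$, which is Claim \ref{shift1}.

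The main obstacle is the mismatch between the control available near and far from the kink: the distance $d_{A,\boX^1}$ supplies only $L^2$ bounds on $v_x - \exp i\theta\, \v_0'(\cdot - a)$ and on $|v| - |\v_0|$ globally, whereas the $L^\infty$ (phase) closeness holds only on $[-A, A]$. In the region where $\chi'$ is supported the phase of $v$ need therefore not be close to that of the translated kink, so the far-field part of $\int_\R \langle i v, v_x \rangle (x\chi)_x$ cannot be handled by a naive estimate. Overcoming this requires arranging the transition of $\chi$ to sit where the $L^\infty$ window still applies, or equivalently exploiting the conservation and near-vanishing of the momentum $[p]$ (respectively $\boP$) to absorb the far-field momentum flux, while checking that the exponential tails of $\v_0$ multiplied by the $O(\varepsilon)$ perturbation do not degrade the final bound from $K\varepsilon$ to $K\varepsilon^{1/2}$. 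A secondary technical point is the continuity/bootstrap argument ensuring that $|a(t)|$ stays bounded on $[0, 1]$, so that the identity $M(t) = -2\sqrt{2}\, a(t) + O(\varepsilon)$ remains valid throughout.
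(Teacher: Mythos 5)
Your proposal follows essentially the same route as the paper's proof of Claim \ref{shift1}: the paper likewise reads the shift off a cut-off center of mass $G_{a,R}(\v_0)$, uses Proposition \ref{prop:centredemasse} together with the pointwise identity $\langle i e^{i\theta}\,\v_0,\ (e^{i\theta}\,\v_0)'\rangle \equiv 0$ to bound the momentum-flux term by $K\varepsilon$, integrates over $[0,1]$, and closes with the same minimal-time bootstrap ensuring $|a(t)|$ stays below $\tfrac12$. The one adjustment needed in your write-up is that the cutoff cannot be fixed independently of $\varepsilon$: with a fixed $\chi$, the relation $M(t) = -2\sqrt{2}\,a(t) + O(\varepsilon)$ acquires an additional $\varepsilon$-independent error from the tail of $1-|\v_0|^2$ outside the plateau of $\chi$, which is precisely why the paper works with $\chi_R$ and chooses $R$ (depending on $\varepsilon$) so that $|G_{a,R}(\v_0) - a| \leq \varepsilon$, its condition \eqref{eq:jet}.
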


Indeed, consider the positive numbers $\delta_{\varepsilon, A}$ provided by Claim \ref{shift1}. In view of Theorem \ref{stationnaire}, there exists some positive number $\delta$ such that, if \eqref{pioneer} holds, then, for any $t \in \R$, there exist numbers $a(t)$ and $\theta(t)$ such that
$$d_{A, \boX^1} \big( v(\cdot + a(t), t), \exp i \theta (t) \ \v_0(\cdot) \big) < \delta_{\varepsilon, A}.$$
Given any real number $t_0$, we then denote $w_0(x) = \exp(- i \theta(t_0)) \ v(x + a(t_0), t_0)$, and consider the solution $w$ to \eqref{GP} with initial datum $w_0$. It follows from the definition of $\delta_{\varepsilon, A}$ and Claim \ref{shift1} that, for any $s \in [0, 1]$, and for any numbers $\tilde{a}(s)$ and $\tilde{\theta}(s)$ such that
$$d_{A, \boX^1} \big( w(\cdot + \tilde{a}(s), s), \exp i \tilde{\theta} (s) \ \v_0(\cdot) \big) < \varepsilon,$$
we have
\begin{equation}
\label{toutpetit}
|\tilde{a}(s)| \leq K \varepsilon.
\end{equation}
On the other hand, it follows from the uniqueness of the solutions to \eqref{GP} that
$$v(x, t) = \exp i \theta(t_0) \ w \big( x - a(t_0), t - t_0 \big),$$
so that the points $a(t)$ satisfying inequality \eqref{solaris} for some $t \in [t_0, t_0 + 1]$ and some $\theta(t) \in \R$, are given by
$$a(t) = a(t_0) + \tilde{a}(t - t_0).$$
Hence, by \eqref{toutpetit},
$$|a(t) - a(t_0)| \leq K \varepsilon,$$
for any real numbers $t_0$ and $t$ such that $|t - t_0| \leq 1$, so that
$$|a(t) - a(0)| \leq K \varepsilon (1 + |t|).$$
This completes the proof of Theorem \ref{vitesse}, assuming that $\delta$ is chosen sufficiently small so that we can choose $a(0) = 0$.
\end{proof}

We finally prove Claim \ref{shift1}.

\begin{proof}[Proof of Claim \ref{shift1}]
Let $\chi \in \boC^\infty_{c}(\R, [0, 1])$ be even, and such that $\chi \equiv 1$ on $[-1, 1]$ and $\chi \equiv 0$ outside $[- 2, 2]$, and denote by $\chi_R$ the function $\chi_R(x) \equiv \chi(\frac{x}{R})$, for any $R > 1$. In view of the antisymmetry of $\v_0$ with respect to reflexion, we have
$$G_a(\v_0) \equiv \frac{1}{2 m(\v_0)} \int_{\R} x \big( |\v_0(x - a)|^2 - 1 \big) dx = a,$$
for any $a \in \R$, so that, in view of the exponential decay of $|\v_0|^2 - 1$ at infinity,
$$G_{a, R}(\v_0) \equiv \frac{1}{2 m(\v_0)} \int_{\R} x \big( |\v_0(x - a)|^2 - 1 \big) \chi_R(x) dx \to G_a(\v_0) = a,$$
as $R \to + \infty$, uniformly with respect to $a \in [- 1, 1]$. We fix $R > 1$ such that
\begin{equation}
\label{eq:jet}
|G_{a, R}(\v_0) - a| \leq \varepsilon,
\end{equation}
for any $a \in [- 1, 1]$, so that any shift $a$ may be controlled by the quantity $G_{a, R}(\v_0)$, up to some error term $\varepsilon$, provided that $|a| \leq 1$.

Since $\v_0$ is real-valued, we also have
$$\int_\R \langle i \exp i \theta \ \v_0(x - a), \partial_x \big( \exp i \theta \ \v_0(x - a) \big) \rangle \partial_x (x \chi_R(x)) dx = 0,$$
for any $(a, \theta) \in \R^2$. By Cauchy-Schwarz inequality and the definition of $d_{A,\boX^1}$, we therefore infer that there exists some constant $K \geq 1$, depending possibly on $A$, such that if
\begin{equation}
\label{eq:coucou}
d_{A,\boX^1} \big( v(\cdot +a(t), t), \exp i \theta(t) \ \v_0 \big) \leq \varepsilon,
\end{equation}
for $\varepsilon$ sufficiently small, then
\begin{equation}
\label{eq:coucou1}
\bigg| \frac{1}{m(\v_0)} \int_\R \langle i v(x, t), \partial_x v(x,t) \rangle \partial_x (x \chi_R(x)) dx \bigg| \leq K \varepsilon,
\end{equation}
and moreover,
\begin{equation}
\label{eq:coucou2}
\bigg| G_{a(t), R}(\v_0) - \frac{1}{2 m(\v_0)} \int_\R x \big( |v(x,t)|^2 - 1 \big) \chi_R(x) dx \bigg| \leq K \varepsilon.
\end{equation}
Notice that we may assume that $40 K \varepsilon \leq 1$ for $\varepsilon$ sufficiently small. Proposition \ref{prop:centredemasse} now gives
\begin{align*}
\int_\R x \big( & |v(x, t)|^2 - 1 \big) \chi_R(x) dx - \int_\R x \big( |v_0(x)|^2 - 1 \big) \chi_R(x) dx\\
& = 2 \int_0^t \int_\R \langle i v(x, s), \partial_x v(x, s) \rangle \partial_x (x \chi_R (x)) dx ds,
\end{align*}
so that, by \eqref{eq:coucou1} and \eqref{eq:coucou2},
$$\Big| G_{a(t), R}(\v_0) - G_{a(0), R}(\v_0) \Big| \leq 3 K \varepsilon.$$
Provided that $|a(t)| \leq 1$ and $|a(0)| \leq 1$, we conclude, in view of \eqref{eq:jet}, that
$$\big| a(t) - a(0) \big| \leq 5 K \varepsilon.$$
We finally choose $\delta$ so that \eqref{eq:coucou} holds for any $t \in [-1, 1] $ according to Theorem \ref{stationnaire}, and we may moreover take $a(0) = 0$. Claim \ref{shift1} follows provided that we may prove e.g. that $|a(t)| < \frac{1}{2}$ for any $t \in [0, 1]$.

Assume by contradiction that there exists some numbers $0 \leq t^* \leq 1$, $a(t^*)$ and $\theta(t^*)$ for which \eqref{eq:coucou} holds, but $|a(t^*)| \geq \frac{1}{2}$. Without loss of generality, we may assume that $t^*$ is minimal with respect to that property, so that
\begin{equation}
\label{ader1}
|a(t)| \leq 5 K \varepsilon,
\end{equation}
for any $t \in [0, t^*)$. In view of \eqref{eq:coucou}, we obtain
$$d_{A,\boX^1} \big( \exp i \theta(t^*) \ \v_0 (\cdot - a(t^*)), \exp i \theta(t) \ \v_0 (\cdot - a(t) \big) \leq d_{A,\boX^1} \big( v(t^*), v(t) \big) + 2 \varepsilon.$$
In view of the continuity of the map $t \mapsto v(t)$ with respect to the distance $d_{A,\boX^1}$ (see \cite{Gerard2}), we are led to
\begin{equation}
\label{ader2}
d_{A,\boX^1} \big( \exp i \theta(t^*) \ \v_0 (\cdot - a(t^*)), \exp i \theta(t) \ \v_0 (\cdot - a(t)) \big) \leq 3 \varepsilon,
\end{equation}
for any $t$ sufficiently close to $t^*$. On the other hand, if
$$\inf_{\theta \in \R} \int_{\R} |\v_0'(x) - \exp i \theta \ \v_0'(x - a)|^2 dx \leq 9 \varepsilon^2,$$
for $\varepsilon$ sufficiently small, then we have $|a| \leq \frac{1}{8}$, so that, by \eqref{ader1} and \eqref{ader2},
$$|a(t^*)| \leq |a(t^*) - a(t)| + |a(t)| \leq \frac{1}{8} + 5 K \varepsilon \leq \frac{1}{4},$$
which provides the required contradiction.
\end{proof}

\bibliographystyle{plain}
\bibliography{Bibliogr}

\end{document}